\documentclass[11pt]{amsart}

\setlength{\textwidth}{6.5truein} \setlength{\textheight}{9.8truein}
\setlength{\oddsidemargin}{-0.0in}
\setlength{\evensidemargin}{-0.0in}
\setlength{\topmargin}{-0.5truein}

\usepackage{amsfonts}
\usepackage{amsmath}
\usepackage{amsthm}
\usepackage[mathscr]{euscript}
\usepackage{mathrsfs}
\usepackage{indentfirst}
\usepackage{booktabs,multirow,bigstrut,makecell}
\usepackage{color}\usepackage{enumitem}

\newtheorem{theo}{Theorem}[section]
\newtheorem{lem}[theo]{Lemma}
\newtheorem{coro}[theo]{Corollary}

\newtheorem{rem}[theo]{Remark}
\newtheorem{prop}[theo]{Proposition}
\numberwithin{equation}{section}

\newcommand{\lbl}[1]{\label{#1}}
\allowdisplaybreaks

\newcommand{\be}{\begin{equation}}
\newcommand{\ee}{\end{equation}}
\newcommand\bes{\begin{eqnarray}} \newcommand\ees{\end{eqnarray}}
\newcommand{\bess}{\begin{eqnarray*}}
\newcommand{\eess}{\end{eqnarray*}}
\newcommand{\bbbb}{\left\{\begin{aligned}}
\newcommand{\nnnn}{\end{aligned}\right.}
\newcommand{\bea}{\begin{align*}}
\newcommand{\eea}{\end{align*}}

\newcommand\ep{\varepsilon}

\newcommand\dd{\displaystyle}

\newcommand\dx{{\rm d}x}
\newcommand\dz{{\rm d}z}

\newcommand\dy{{\rm d}y}
\newcommand\dt{{\rm d}t}

\newcommand\lm{\lambda}

\newcommand\bbb{\big}

\newcommand\R{\mathbb{R}}
\newcommand\ol{\overline}

\newcommand\sk{\smallskip}

\markboth{}{}

\begin{document}
\def\theequation{\arabic{section}.\arabic{equation}}

\setlength{\baselineskip}{16pt} \pagestyle{myheadings}

\title[Nonlocal diffusion systems with free boundaries]{Two species nonlocal diffusion systems with free boundaries}

\author[Y.H. Du, M.X. Wang and M. Zhao]{Yihong Du$^\dag$, Mingxin Wang$^\ddag$ and Meng Zhao$^\S$}
\thanks{$^\dag$School of Science and Technology, University of New England, Armidale, NSW 2351, Australia. {\sf E-mail: ydu@une.edu.au.} This author was supported by the Australian Research Council}
\thanks{$^\ddag$({\sf Corresponding author}) School of Mathematics, Harbin Institute of Technology, Harbin 150001, PR China. {\sf E-mail: mxwang@hit.edu.cn.} This author was supported by NSFC Grant 11371113}
\thanks{$^\S$School of Mathematics and Statistics, Lanzhou University, Lanzhou, Gansu 730000, PR China. This author was supported by a scholarship from the China Scholarship Council.}

\date{\today}

\begin{abstract} We study a class of free boundary systems with nonlocal diffusion, which are natural extensions of the corresponding free boundary problems of reaction diffusion systems. As before the free boundary represents the spreading front of the species, but here the population dispersal is described by ``nonlocal diffusion'' instead of ``local diffusion". We prove that such a nonlocal diffusion problem with free boundary has a unique global solution, and for models with Lotka-Volterra type competition or predator-prey growth terms,
we show that a spreading-vanishing dichotomy holds, and obtain criteria for spreading and vanishing; moreover, for the weak competition case and for the weak predation case, we can determine the long-time asymptotic limit of the solution when spreading happens.  Compared with the single species free boundary model with nonlocal diffusion considered recently in \cite{CDLL},
and the two species cases with local diffusion extensively studied in the literature, the situation considered in this paper
involves several extra difficulties, which are overcome by the use of some new techniques.

\textbf{Keywords}: Nonlocal diffusion system; Free boundary;  Existence-uniqueness; Spreading-vanishing dichotomy

\textbf{AMS Subject Classification (2000)}: 35K57; 35R20; 92D25
\end{abstract}
\maketitle

\section{Introduction}
\setcounter{equation}{0} {\setlength\arraycolsep{2pt}

Nonlocal diffusion has been widely used to describe diffusion processes where  long range dispersal may play a significant role, a situation arising frequently in propagation questions  in biology and ecology (see, e.g., \cite{Nathan12}). Several well-known population models, where population dispersal was traditionally approximated by local diffusion, have been examined recently with the local diffusion operator in the model replaced by a nonlocal diffusion operator;
 see, for example, \cite{BL, BLS, BZ07, B-JMB16, HNS, H-JMB03, KLS10} and references therein. A commonly used nonlocal diffusion operator
 has the form
 \[d(J * u-u)(t,x):=d\left(\int_{\mathbb R^N} J(x-y)u(t,y)dy-u(t,x)\right),\]
where the kernel function $J: \mathbb R\to \mathbb R$ is continuous, nonnegative, even, and $\int_{\mathbb R} J(x)dx=1$. The quantity $J(x-y)$ is proportional to the probability that an individual member of the species (whose population density is $u(t,x)$) in location $x$ moves to location $y$ or vice versa.

In \cite{CDLL}, such a nonlocal diffusion operator was applied to the free boundary model of \cite{DLin10}, to investigate the spreading behaviour of a new or invasive species. The nonlocal diffusion model with free boundary in \cite{CDLL} has the form\vspace{-3mm}
 \begin{equation}\left\{\begin{aligned}
&u_t=d\int_{g(t)}^{h(t)}J(x-y)u(t,y)\dy-du(t,x)+f(t,x,u), & &t>0,~g(t)<x<h(t),\\
&u(t,g(t))=u(t,h(t))=0,& &t>0,\\
&h'(t)=\mu\int_{g(t)}^{h(t)}\int_{h(t)}^{\infty}
J(x-y)u(t,x)\dy\dx,& &t>0,\\
&g'(t)=-\mu\int_{g(t)}^{h(t)}\int_{-\infty}^{g(t)}
J(x-y)u(t,x)\dy\dx,& &t>0,\\
&u(0,x)=u_0(x),& &|x|\le h_0,\\
&h(0)=-g(0)=h_0,
 \end{aligned}\right.
 \label{1.1}\end{equation}
where $x=g(t)$ and $x=h(t)$ are the moving boundaries
to be determined together with $u(t,x)$, which is always assumed to be identically 0 for $x\in \R\setminus [g(t), h(t)]$; $d$, $\mu$ and $h_0$ are given positive constants. The kernel function $J: \mathbb{R}\to\mathbb{R}$  satisfies
 \begin{enumerate}[leftmargin=4em]
\item[{\bf(J)}] \;\; $J$  is continuous, nonnegative and even,\;\;  $J(0)>0,~\dd\int_{\mathbb{R}}J(x)\dx=1$, \ \ $\dd\sup_{\R}J<\infty$.
 \end{enumerate}
The growth function $f(t,x,u)$ is continuous, locally Lipschiz in $u$, and $f(t,x, 0)\equiv 0$.

In \cite{CDLL}, the existence and uniqueness of a global solution were proved, and for the special case that $f=f(u)$ is a logistic function, a spreading-vanishing dichotomy, criteria for spreading and vanishing, and long time behaviour of the solution were established.  A series of new ideas and techniques appeared in \cite{CDLL}.

In this paper we further develop the ideas and techniques in \cite{CDLL} to study systems of population models with nonlocal diffusion and free boundaries. It turns out that extra difficulties arise, and further new techniques are required. In order to keep the presentation transparent and ideas clear,
we will restrict to systems with only two species.

We consider the case that the two  species under consideration spread through a common spreading front, as in \cite{GW, Wjde14, WZjdde14}. Such a setting arises rather naturally in several situations; for example, when the two species are of predator-prey type, with the predator following (or driving) the spreading of the prey, or for two competing plant species
whose spreading relies on the same group of animals (insects, birds etc.) carrying their seeds to new fields.
Based on the  free boundary conditions in \eqref{1.1} above, this free boundary problem with nonlocal diffusion can be expressed in  the form
 \bes
\left\{\begin{aligned}
&u_{it}=d_i\dd\int_{g(t)}^{h(t)}J_i(x-y)u_i(t,y)\dy-d_iu_i+f_i(t, x, u_1, u_2), &&t>0,~g(t)<x<h(t),\\
&u_i(t,g(t))=u_i(t,h(t))=0, &&t\ge 0,\\
&h'(t)=\dd\sum_{i=1}^2\mu_i \int_{g(t)}^{h(t)}\!\int_{h(t)}^{\infty}
J_i(x-y)u_i(t,x)\dy\dx, &&t\ge 0,\\
&g'(t)=-\dd\sum_{i=1}^2\mu_i\int_{g(t)}^{h(t)}\!\int_{-\infty}^{g(t)}\!
J_i(x-y)u_i(t,x)\dy\dx,\ &&t\ge 0,\\
&u_i(0,x)=u_{i0}(x),\ \ \ h(0)=-g(0)=h_0, &&|x|\le h_0,\\
&i=1,\,2,
\end{aligned}\right.
 \label{1.2}
 \ees
where $x=g(t)$ and $x=h(t)$ are the moving boundaries to be determined together with $u_1(t,x)$ and $u_2(t,x)$, which are always assumed to be identically $0$ for $x\in \R\setminus [g(t), h(t)]$; $d_i$ and $\mu_i$ ($i=1,2$) are positive constants.
We assume that the initial function pair $(u_{10}, u_{20})$ satisfies
\begin{equation}
u_{i0}\in C([-h_0,h_0]),~ \ u_{i0}(\pm h_0)=0, ~ ~u_{i0}>0~\ \text{ in }~(-h_0,h_0), \ \ i=1,\,2,
\label{1.3}
\end{equation}
with  $[-h_0,h_0]$ representing the initial population range of the species. The kernel functions $J_1$ and $J_2$   satisfy the condition {\bf(J)}.

The free boundary conditions in \eqref{1.2} mean that the expansion rate of the common population range of the two species is proportional to the outward flux of the population of the two species; some  justifications of this assumption can be found in \cite{CDLL}.

The growth terms $f_i$ ($i=1,2$) are assumed to be continuous and satisfy
 \begin{enumerate}[leftmargin=4em]
\item[{\bf(f)}] $f_1(t,x,0,u_2)=f_2(t,x,u_1,0)=0$, and $f_i(t,x,u_1, u_2)$
is locally Lipschitz in $u_1,u_2\in\R^+$, i.e., for any $K_1, K_2>0$, there exists a constant $L(K_1, K_2)>0$ such that
 \[|f_i(t,x,u_1,u_2)-f_i(t,x,v_1,v_2)|\le L(K_1,K_2)(|u_1-v_1|+|u_2-v_2|)\]
for all $u_1, v_1\in [0, K_1]$, $u_2, v_2\in [0, K_2]$ and all $(t,x)\in \R^+\times \R$. When $K_1=K_2$, we write $L(K_1, K_2)=L(K_1)$;
\item[{\bf(f1)}] There exist $k>0$ and $r>0$ such that for all $u_2\ge 0$ and $(t,x)\in \R^+\times \R$, there hold: $f_1(t,x,u_1,u_2)<0$ when $u_1>k$, $f_1(t,x,u_1,u_2)\le ru_1$ when $0<u_1\le k$;
\item[{\bf(f2)}] For any given $K>0$, there exists $\Theta(K)>0$ such that $f_2(t,x,u_1,u_2)<0$ for $0\le u_1\le K$, $u_2\ge \Theta(K)$ and $(t,x)\in\R^+\times\R$.
 \end{enumerate}

\smallskip

We note that condition {\bf(f)} implies
 \[|f_1(t,x,u_1,u_2)|\le L(K_1,K_2)|u_1|, \ \ |f_2(t,x,u_1,u_2)|\le L(K_1,K_2)|u_2|\]
 for all $u_1\in[0, K_1]$, $u_2\in[0, K_2]$ and all $(t,x)\in \R^+\times \R$.

It is easily seen that the conditions {\bf(f)}, {\bf(f1)} and {\bf(f2)} hold for the following classical Lotka-Volterra competition and predator-prey growth terms:
 \bes
  &&\hspace{-0.8cm}\mbox{\bf Competition\, Model}: \ f_1=u_1(a_1-b_1u_1-c_1u_2), \ \ f_2=u_2(a_2-b_2u_2-c_2u_1),
  \lbl{1.4}\\[1mm]
 &&\hspace{-0.8cm}\mbox{\bf Predator-prey\, Model}: \ f_1=u_1(a_1-b_1u_1-c_1u_2), \ \ f_2=u_2(a_2-b_2u_2+c_2u_1),
  \lbl{1.5}\ees
where $a_i, b_i, c_i$ ($i=1,2$) are positive constants.

Unless otherwise stated, we always assume that $f_1$ and $f_2$ satisfy {\bf (f)}, {\bf (f1)} and {\bf (f2)}, $J_1,\,J_2$ satisfy  {\rm \bf (J)}, and
\eqref{1.3} is satisfied by the initial function pair. We will write
 \[\mbox{$\|a,b\|\le M$ to mean $\|a\|\le M$, $\|b\|\le M$.}\]

The main results of this paper are the following theorems.
 \begin{theo}\label{th1.1}\,Problem \eqref{1.2} has a unique  solution $(u_1, u_2, g, h)$ defined for all $t>0$.
 \end{theo}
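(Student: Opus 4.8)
The plan is threefold: (i) prove local existence and uniqueness on a short time interval $[0,T]$ by a fixed-point argument in the unknown free boundary pair $(g,h)$; (ii) derive $T$-independent a priori bounds for $u_1$, $u_2$ and for $h,-g$; and (iii) extend the solution to all $t>0$ by a continuation argument.

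For step (i), fix a small $T>0$ and a constant $M>0$ (to be determined from the a priori estimates) and consider the closed convex set
\[
\mathcal{D}_T=\big\{(g,h)\in [C([0,T])]^2:\ h(0)=-g(0)=h_0,\ \ h \text{ nondecreasing},\ g \text{ nonincreasing},\ \ |h(t)-h(s)|,\,|g(t)-g(s)|\le M|t-s|\big\}.
\]
Given $(g,h)\in\mathcal{D}_T$, set $\Omega^T_{g,h}:=\{(t,x):0\le t\le T,\ g(t)\le x\le h(t)\}$ and solve the $u$-subsystem on $\Omega^T_{g,h}$. Since the nonlocal operator provides no smoothing, I would rewrite the $u_i$-equation in the mild form
\[
u_i(t,x)=u_{i0}(x)e^{-d_it}+\int_0^t e^{-d_i(t-s)}\Big(d_i\!\int_{g(s)}^{h(s)}\!J_i(x-y)u_i(s,y)\,dy+f_i\big(s,x,u_1(s,x),u_2(s,x)\big)\Big)ds
\]
($u_{i0}$ extended by $0$ outside $[-h_0,h_0]$) and run a contraction argument in $[C(\overline{\Omega^T_{g,h}})]^2$, using the local Lipschitz hypothesis {\bf (f)}; for $T$ small this gives a unique pair $(u_1^{g,h},u_2^{g,h})$, which by a comparison argument is nonnegative and, using $u_{i0}(\pm h_0)=0$ together with the strict monotonicity of the boundaries, satisfies $u_i(t,g(t))=u_i(t,h(t))=0$. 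Then define $\Gamma(g,h)=(\tilde g,\tilde h)$ by integrating the free boundary laws,
\[
\tilde h(t)=h_0+\sum_{i=1}^{2}\mu_i\int_0^t\!\!\int_{g(s)}^{h(s)}\!\!\int_{h(s)}^{\infty}J_i(x-y)\,u_i^{g,h}(s,x)\,dy\,dx\,ds,
\]
and symmetrically for $\tilde g$. That $\tilde h$ is nondecreasing and $\tilde g$ nonincreasing is immediate from nonnegativity of the integrands; the Lipschitz bound $|\tilde h(t)-\tilde h(s)|\le M|t-s|$ follows from $\int_{h(s)}^{\infty}J_i(x-y)\,dy\le1$ and the (a priori, $T$-independent) bound on $\|u_i^{g,h}\|_\infty$, which is what fixes $M$; hence $\Gamma$ maps $\mathcal{D}_T$ into itself.

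The contraction property of $\Gamma$ is the main obstacle. It reduces to showing that $(u_1^{g,h},u_2^{g,h})$ depends Lipschitz-continuously on $(g,h)$ in the sup norm, say $\|u_i^{g,h}-u_i^{\hat g,\hat h}\|_\infty\le C(T)\big(\|g-\hat g\|_\infty+\|h-\hat h\|_\infty\big)$ with $C(T)$ bounded. This requires subtracting the two mild formulations written on different moving domains, carefully estimating the mismatch between the integration ranges $\int_{g(s)}^{h(s)}$ and the behaviour of $u_i$ near the two (distinct) boundaries, using that $J_i$ is bounded, and then closing a Gronwall inequality; combined with the factor $T$ coming from $\int_0^t$, this makes $\Gamma$ a contraction on $\mathcal{D}_T$ for small $T$. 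This is the nonlocal analogue of the corresponding argument in \cite{CDLL}, but now it must be run simultaneously for the coupled pair $(u_1,u_2)$, which is the genuinely extra difficulty here; uniqueness on $[0,T]$ follows from uniqueness of the fixed point of $\Gamma$ together with uniqueness of the inner $u$-solve.

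For step (ii), the $L^\infty$ bounds must be obtained in the order dictated by {\bf (f1)}--{\bf (f2)}. From {\bf (f1)} a comparison argument gives $0\le u_1\le K_1:=\max\{k,\|u_{10}\|_\infty\}$ for as long as the solution exists; feeding this into {\bf (f2)} with the constant $\Theta(K_1)$, a second comparison gives $0\le u_2\le K_2:=\max\{\Theta(K_1),\|u_{20}\|_\infty\}$. With these bounds the free boundary laws yield $0\le h'(t)\le \big(\sum_i\mu_iK_i\big)(h(t)-g(t))$ and $-\big(\sum_i\mu_iK_i\big)(h(t)-g(t))\le g'(t)\le0$, so $(h-g)'\le 2\big(\sum_i\mu_iK_i\big)(h-g)$ and Gronwall gives $h(t)-g(t)\le 2h_0\,e^{2(\sum_i\mu_iK_i)t}$; in particular $h$, $-g$ and their velocities remain bounded on every finite interval. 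Finally, for step (iii), if $[0,T_{\max})$ is the maximal existence interval with $T_{\max}<\infty$, the step-(ii) bounds are uniform up to $T_{\max}$, so the solution extends continuously to $t=T_{\max}$ with admissible data there; reapplying step (i) with initial time $T_{\max}$ pushes the solution beyond $T_{\max}$, a contradiction. Hence $T_{\max}=\infty$, and the local uniqueness from step (i) patches into uniqueness of the global solution.
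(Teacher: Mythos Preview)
Your overall architecture matches the paper's: solve the $u$-subsystem for a prescribed boundary pair, define a map $\Gamma$ on $(g,h)$, contract for small $T$, derive $T$-independent $L^\infty$ bounds from {\bf (f1)}--{\bf (f2)} (exactly the constants $A_1,A_2$ the paper obtains), and continue. Steps (ii) and (iii) are correct and essentially identical to the paper's.

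The genuine gap is in the contraction step. Your set $\mathcal{D}_T$ imposes only an \emph{upper} Lipschitz bound $M$ on the boundaries; the paper's set $\Sigma_T$ additionally enforces a strictly positive \emph{lower} speed bound $h'\ge h_*>0$, $g'\le g_*<0$, and this is not cosmetic. When you compare $u^{g,h}$ with $u^{\hat g,\hat h}$ in the sup norm, the hard contribution comes from the mismatch strip, say $\hat h(s)\le x<h(s)$, where $u^{\hat g,\hat h}(s,x)=0$ and you must bound $u^{g,h}(s,x)$ by a constant times $\|h-\hat h\|_\infty$. Since $x>h_0$, there is $t_x<s$ with $h(t_x)=x$, and integrating the equation from $t_x$ to $s$ gives only $u^{g,h}(s,x)\le C(s-t_x)$. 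To turn this into $C\|h-\hat h\|_\infty$ the paper uses $s-t_x\le h_*^{-1}\big(h(s)-h(t_x)\big)\le h_*^{-1}\big(h(s)-\hat h(s)\big)$; without $h_*>0$ there is no such conversion, and your Gronwall loop for $\|u^{g,h}-u^{\hat g,\hat h}\|_\infty$ does not close. The paper secures $h_*,g_*>0$ by first proving the pointwise lower bound $w_i(t,x)\ge e^{-(d_i+L)t}u_{i0}(x)$ for $|x|\le h_0$ and feeding it into the boundary integrals (using $J_i\ge\delta_0$ near the origin), then building this rate into the definition of $\Sigma_T$ so that it is available for both candidate boundary pairs. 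This ingredient is absent from your proposal.

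A secondary issue: your mild formula $u_i(t,x)=u_{i0}(x)e^{-d_it}+\int_0^t(\cdots)\,ds$ is only correct for $|x|\le h_0$. For $x\in(h_0,h(t)]$ the equation is posed only from the entry time $t_x$ (where $h(t_x)=x$) with zero initial value; writing the integral from $0$ with $u_{i0}$ extended by zero does not recover $u_i(t_x,x)=0$, since the nonlocal term contributes positively before $t_x$. The paper avoids this by treating the $u$-equations as an $x$-parametrized ODE system on $[t_x,s]$ and running the inner fixed point there.
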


\begin{theo}[Spreading-vanishing dichotomy]\label{th1.2}\, Assume further that $J_1(x)>0$, $J_2(x)>0$ in $\mathbb{R}$, and  that $(f_1,\,f_2)$ satisfies either \eqref{1.4} or \eqref{1.5}. Let $(u_1, u_2, g, h)$ be the unique solution of \eqref{1.2}. Then one of the following alternatives must happen:
\begin{itemize}
\item[{\rm(i)}] {\rm Spreading:} $\dd\lim_{t\to\infty} [h(t)-g(t)]=\infty$,
\item[{\rm(ii)}] {\rm Vanishing:} $\dd\lim_{t\to\infty} (g(t), h(t))=(g_\infty, h_\infty)$ is a finite interval and $\dd\lim_{t\to\infty}\max_{g(t)\le x\le h(t)}u_i(t,x)=0$, $i=1,\, 2$.
\end{itemize}
\end{theo}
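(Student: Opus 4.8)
The plan is to establish the dichotomy by first showing that the limits $g_\infty := \lim_{t\to\infty} g(t)$ and $h_\infty := \lim_{t\to\infty} h(t)$ always exist in $[-\infty,\infty]$ — this is immediate since $g$ is decreasing and $h$ is increasing by the free boundary equations in \eqref{1.2} (the integrands are nonnegative). Thus the only genuine alternatives are: (a) $h_\infty - g_\infty = \infty$, or (b) $h_\infty - g_\infty < \infty$, i.e. both $g_\infty$ and $h_\infty$ are finite. To complete the proof it therefore suffices to show that in case (b) one has $\lim_{t\to\infty}\max_{g(t)\le x\le h(t)} u_i(t,x) = 0$ for $i=1,2$; the equality $\lim_{t\to\infty}[h(t)-g(t)]=\infty$ in case (a) is the statement of (i) directly.

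So assume $h_\infty - g_\infty =: L < \infty$. First I would record the a priori bounds: by {\bf(f1)} and a comparison argument for the scalar nonlocal equation (as developed in \cite{CDLL}), $u_1$ is bounded above by a constant $M_1$ depending only on $k$, $\|u_{10}\|_\infty$; then by {\bf(f2)} with $K = M_1$, a similar comparison gives $u_2 \le M_2$ for a constant depending on $\Theta(M_1)$ and $\|u_{20}\|_\infty$. Hence $\|u_1(t,\cdot),u_2(t,\cdot)\|_\infty \le M$ for all $t\ge 0$. Next, from the free boundary equations, $h'(t)+(-g'(t)) = \sum_i \mu_i \int_{g(t)}^{h(t)}\!\int_{\mathbb R\setminus(g(t),h(t))} J_i(x-y)u_i(t,x)\,dy\,dx$, and since the left side is integrable over $(0,\infty)$ (because $h-g$ has finite limit), the right side tends to $0$ along a sequence, and in fact, using the uniform bound on $u_i$ and the structure, one deduces $\int_0^\infty\!\!\int_{g(t)}^{h(t)} u_i(t,x)\,dx\,dt < \infty$ after exploiting that $\int_{\mathbb R\setminus(g(t),h(t))} J_i(x-y)\,dy$ is bounded below by a positive constant uniformly for $x$ in a fixed-length subinterval near the boundary — this is where $J_i>0$ on $\mathbb R$ is used. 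This forces $\int_{g(t)}^{h(t)} u_i(t,x)\,dx \to 0$ along a time sequence $t_n\to\infty$.

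To upgrade convergence along a sequence to convergence of the sup as $t\to\infty$, I would argue by contradiction: if $\limsup_{t\to\infty}\max_x u_i(t,x) = \delta > 0$ for some $i$, pick times $s_n\to\infty$ with $\max_x u_i(s_n,x)\ge \delta/2$, attained at points $x_n\in[g(s_n),h(s_n)]\subset[g_\infty,h_\infty]$; passing to a subsequence $x_n\to x_*$ and translating time, use the uniform bounds together with the equation to extract (via an Arzelà–Ascoli / compactness argument for the nonlocal evolution — uniform continuity in $t$ follows from the bounded right-hand side, continuity in the spatial variable from continuity of $J_i$ and $u_{i0}$) a limiting nonnegative solution $(\tilde u_1,\tilde u_2)$ on a fixed bounded interval with $\tilde u_i(0,x_*)\ge\delta/2 > 0$. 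A strong positivity property for the nonlocal operator (again from \cite{CDLL}, using $J_i>0$) then gives $\tilde u_i > 0$ on an open set, contradicting $\int \tilde u_i\, dx = 0$ forced by the previous step. Hence $\max_x u_i(t,x)\to 0$, which is conclusion (ii).

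I expect the main obstacle to be the compactness/limiting-equation step in the last paragraph: unlike the local-diffusion case, there is no smoothing from the nonlocal operator, so one cannot appeal to parabolic regularity, and the limiting object must be controlled purely through the integral equation and the uniform $L^\infty$ bound, with the positivity of $J_i$ on all of $\mathbb R$ doing the work that would normally be done by the strong maximum principle. A secondary technical point is handling the coupled system: one must bound $u_1$ first (using {\bf(f1)}, which does not involve $u_2$ in an essential way for the upper bound), and only then bound $u_2$ via {\bf(f2)}; the competition versus predator-prey distinction does not affect this dichotomy argument since only the sign conditions {\bf(f1)}, {\bf(f2)} are used, not the precise form \eqref{1.4} or \eqref{1.5}.
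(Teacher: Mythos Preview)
Your outline tracks the paper's argument through the step $\int_{g(t)}^{h(t)} u_i(t,x)\,dx\to 0$, but two points need correction and the final step has a genuine gap. First, you only claim convergence of $\int u_i\,dx$ along a subsequence; the paper proves the full limit, and this matters. The mechanism (Lemma~\ref{l3.1}) is that boundedness of $u_i$ and $u_{it}$ makes $h'$ globally Lipschitz in $t$, so $h'(t)\to 0$ outright; then, since $J_i>0$ on all of $\mathbb R$ and $h_\infty-g_\infty<\infty$, one has $\int_{h(t)}^\infty J_i(x-y)\,dy\ge\sigma_0>0$ uniformly for $x\in(g(t),h(t))$, giving $\int_{g(t)}^{h(t)} u_i(t,x)\,dx\le h'(t)/(\mu_i\sigma_0)\to 0$ (Lemma~\ref{l3.2}). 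Second, your assertion that only {\bf(f1)}, {\bf(f2)} are used and not the explicit form \eqref{1.4} or \eqref{1.5} is incorrect; the paper's argument uses the Lotka--Volterra structure in an essential way, as explained below.

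The real gap is the compactness step you correctly flag as the main obstacle: with no smoothing from the nonlocal operator, you give no mechanism for equicontinuity of $\{u_i(s_n+\cdot,\cdot)\}$ in $x$, so the Arzel\`a--Ascoli extraction is unjustified and the limiting-equation argument does not go through. The paper avoids compactness entirely. From $\int_0^\infty\!\int u_i\,dx\,dt<\infty$ and Fubini it gets $u_i(t,x)\to 0$ for a.e.\ fixed $x$ (using $u_{it}\in L^\infty$). It then studies $M_i(t):=\max_x u_i(t,x)$ directly, computing its one-sided Dini derivatives as $u_{it}$ evaluated at maximisers; this yields, under $\limsup_t M_1(t)=\sigma^*>0$, a sequence $(t_n,\xi_n)$ with $u_1(t_n,\xi_n)\to\sigma^*$ and $u_{1t}(t_n,\xi_n)\to 0$. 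Since $\|u_1(t_n,\cdot)\|_{L^1}\to 0$ and $J_1$ is bounded, the convolution term vanishes in the limit, and plugging into the $u_1$-equation at $(t_n,\xi_n)$ gives $0=-d_1\sigma^*+\sigma^*(a_1-b_1\sigma^*-c_1\rho)<(a_1-d_1)\sigma^*$, forcing $a_1>d_1$. But then at any point $x_0$ where both $u_i(t,x_0)\to 0$, one has $u_{1t}(t,x_0)\ge\tfrac{a_1-d_1}{2}u_1(t,x_0)$ for large $t$, so $u_1(t,x_0)\to\infty$, a contradiction. It is exactly the inequality $-d_1\sigma^*+\sigma^*(a_1-b_1\sigma^*-c_1\rho)<(a_1-d_1)\sigma^*$ that requires the explicit reaction terms; conditions {\bf(f1)}, {\bf(f2)} alone do not suffice.
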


\begin{theo}[Spreading-vanishing criteria]\label{th1.3}
Under the conditions of Theorem \ref{th1.2}, the following conclusions hold:
\begin{itemize}
\item[\rm (i)] If either $a_1\ge d_1$ or $a_2\ge d_2$, then spreading always happens.
\item[\rm (ii)] If $a_1<d_1$ and $a_2<d_2$, then there exists a unique $\ell_*>0$ such that
\begin{enumerate}
\item[\rm (a)] whenever vanishing happens, we have $h_\infty-g_\infty\le\ell_*$,
\item[\rm (b)] spreading always happens when $h_0\geq \ell_*/2$,
\item[\rm (c)] if  $h_0<\ell_*/2$,  then there exist two positive numbers $\Lambda^*\ge\Lambda_*>0$ such that vanishing happens when $\mu_1+\mu_2\leq\Lambda_*$ and spreading happens when $\mu_1 +\mu_2>\Lambda^*$.
\end{enumerate}
\end{itemize}
\end{theo}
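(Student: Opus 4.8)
The plan is to combine the dichotomy of Theorem~\ref{th1.2} with the comparison principle for \eqref{1.2}, the strict positivity of $u_i$ inside $(g(t),h(t))$ for $t>0$, the global bounds $0\le u_i\le M_i$, and the standard properties of the generalized principal eigenvalue $\lambda_p^{(i)}(\ell)$ of the nonlocal operator $\phi\mapsto d_i\int_{-\ell/2}^{\ell/2}J_i(x-y)\phi(y)\dy-d_i\phi+a_i\phi$ on $(-\ell/2,\ell/2)$: it is continuous and strictly increasing in $\ell$, satisfies $a_i-d_i<\lambda_p^{(i)}(\ell)<a_i$ with $\lambda_p^{(i)}(\ell)\to a_i-d_i$ as $\ell\to0^+$ and $\lambda_p^{(i)}(\ell)\to a_i$ as $\ell\to\infty$, is attained by a positive eigenfunction, and depends continuously on the domain and on the zeroth‑order coefficient. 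When $a_i<d_i$ let $\ell_i^*$ be the unique length with $\lambda_p^{(i)}(\ell_i^*)=0$, and set $\ell_*:=\min\{\ell_1^*,\ell_2^*\}$. The engine is the following \emph{survival alternative}: if vanishing occurs, then on $(g_\infty,h_\infty)$ the nonlinear and coupling terms are negligible (since $u_1,u_2\to0$ uniformly), so on a fixed subinterval $[a,b]\subset(g_\infty,h_\infty)$ the $u_i$‑equation dominates $\bar u_{it}\ge d_i\int_a^b J_i(x-y)u_i\,\dy-d_iu_i+(a_i-\eta)u_i$ for $t$ large; comparing with a subsolution $\rho\,\phi_i(x)e^{\lambda_0(t-T)}$, where $\phi_i>0$ realizes the principal eigenvalue $\lambda_0$ of this (slightly perturbed) operator on $(a,b)$ and $\rho>0$ is small, we get $u_i(t,\cdot)\to\infty$ if $\lambda_0>0$, contradicting $u_i\le M_i$. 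Hence vanishing forces $\lambda_p^{(i)}(h_\infty-g_\infty)\le0$ for $i=1,2$.

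Part (i) follows at once: if spreading fails, then by Theorem~\ref{th1.2} vanishing occurs, so $\lambda_p^{(i)}(h_\infty-g_\infty)\le0$ for the index $i$ with $a_i\ge d_i$; but $\lambda_p^{(i)}(h_\infty-g_\infty)>a_i-d_i\ge0$, a contradiction. (For $i=2$ in the competition model one uses $u_1\to0$ to obtain $f_2\ge u_2(a_2-\eta)$ for large $t$; in the predator--prey model $f_2\ge u_2(a_2-b_2u_2)$ already suffices.) Part (ii)(a) is immediate too: vanishing gives $\lambda_p^{(i)}(h_\infty-g_\infty)\le0$, hence $h_\infty-g_\infty\le\ell_i^*$ for $i=1,2$, i.e.\ $h_\infty-g_\infty\le\ell_*$. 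For (ii)(b): since $J_i>0$ on $\R$ and $u_i>0$ in $(g(t),h(t))$, the free boundary relations give $h'(t)>0>g'(t)$ for $t>0$, so $h(t)-g(t)$ is strictly increasing; if $h_0\ge\ell_*/2$ then $h(t)-g(t)>2h_0\ge\ell_*$ for $t>0$, so $h_\infty-g_\infty>\ell_*$, which by (ii)(a) excludes vanishing, so spreading occurs.

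For the spreading part of (ii)(c), suppose spreading fails, so by Theorem~\ref{th1.2} and (ii)(a) we have $h(t)-g(t)\le\ell_*$ for all $t$. Since $[-h_0,h_0]\subseteq[g(t),h(t)]$ always, $u_i$ restricted to $(-h_0,h_0)$ is a supersolution of the scalar nonlocal problem on $(-h_0,h_0)$ obtained by freezing the unfavourable coupling term (replacing $u_2$ by $0$ in $f_1$, and $u_1$ by $M_1$ in $f_2$ in the competition model, or by $0$ in the predator--prey model); hence $u_i(t,x)\ge w_i(t,x)>0$ in $(-h_0,h_0)$ with $w_i$ independent of $\mu_1,\mu_2$, so $u_i\ge\rho>0$ on $[\tfrac12,1]\times[-h_0/2,h_0/2]$. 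Since $h(t)$ stays bounded, $\int_{h(t)}^{\infty}J_i(x-y)\dy$ is bounded below by a positive constant for $x\in[-h_0/2,h_0/2]$ (using $J_i>0$ on $\R$), so $(h-g)'(t)\ge c(\mu_1+\mu_2)$ on $[\tfrac12,1]$ for some $c>0$ independent of $\mu_1,\mu_2$; integrating and using $(h-g)(\tfrac12)\ge2h_0$ gives $h(1)-g(1)\ge2h_0+c(\mu_1+\mu_2)/2$, which exceeds $\ell_*$ once $\mu_1+\mu_2>\Lambda^*$ for a suitable $\Lambda^*$ (finite since $h_0<\ell_*/2$), contradicting $h(t)-g(t)\le\ell_*$. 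Thus $\mu_1+\mu_2>\Lambda^*$ forces spreading.

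The remaining, and most delicate, claim is vanishing for small $\mu_1+\mu_2$. Pick $\delta>0$ with $\ell:=2h_0(1+\delta)<\ell_*$, so $\lambda_p^{(i)}(\ell)=:-\nu_i<0$ with a positive eigenfunction $\phi_i$ on $(-\ell/2,\ell/2)$. We construct a supersolution $(\bar u_1,\bar u_2,-\sigma,\sigma)$ of \eqref{1.2} with $\sigma(t)=h_0\big(1+\delta-\tfrac{\delta}{2}e^{-\gamma t}\big)<\ell/2$; the comparison principle then gives $[g(t),h(t)]\subset[-\sigma(t),\sigma(t)]$, hence $h(t)-g(t)<\ell<\infty$, so vanishing holds by Theorem~\ref{th1.2}. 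For the competition model take $\bar u_i(t,x)=C_ie^{-\beta_i t}\phi_i(x)$ with $0<\beta_i\le\nu_i$ and $C_i$ large enough that $C_i\phi_i\ge u_{i0}$ on $[-h_0,h_0]$; then $f_i\le a_i\bar u_i$ reduces the differential inequalities to $\beta_i\le\nu_i$. For the predator--prey model the positive feedback term $c_2u_1u_2$ in $f_2$ rules out a pure exponential; instead keep $\bar u_1=C_1e^{-\beta_1 t}\phi_1$ but take $\bar u_2(t,x)=P(t)\phi_2(x)$ with $P$ solving $P'=\big(-\nu_2+c_2C_1\|\phi_1\|_\infty e^{-\beta_1 t}\big)P$, $P(0)=C_2$, which still decays exponentially (with an enlarged constant) and makes the $\bar u_2$‑inequality hold. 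Finally one verifies $\sigma'(t)\ge\sum_i\mu_i\int_{-\sigma(t)}^{\sigma(t)}\int_{\sigma(t)}^{\infty}J_i(x-y)\bar u_i(t,x)\,\dy\dx$ (and its mirror): the right-hand side decays exponentially with constants depending only on $\delta,\gamma,C_i,\|\phi_i\|_\infty$, so choosing $\gamma<\min\{\beta_1,\nu_2\}$ and then $\mu_1+\mu_2$ small makes it valid. Taking $\Lambda_*$ to be such a smallness threshold finishes (ii)(c), with $\Lambda_*\le\Lambda^*$ since vanishing and spreading cannot both occur. I expect the main obstacles to be precisely this construction --- balancing the smallness of $\mu_1+\mu_2$ against the eigenvalue geometry and absorbing the predator's positive feedback through the ODE profile --- together with establishing a comparison principle valid for the non‑cooperative free boundary system \eqref{1.2}, which should be obtained by treating $u_1$ and then $u_2$ at the first possible contact time.
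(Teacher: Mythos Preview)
Your outline is essentially correct and tracks the paper's strategy: parts~(i), (ii)(a) and (ii)(b) are exactly the paper's route via the eigenvalue inequality \eqref{3.5} (Theorem~\ref{th3.3}, Corollary~\ref{co3.4}), and your eigenfunction supersolution for vanishing in (ii)(c) is a close variant of Lemma~\ref{l3.6} --- the paper first bounds solutions of auxiliary \emph{linear} problems on $(-h_1,h_1)$ by $Ce^{\lambda_it/2}\phi_i$ and then manufactures the free boundary $r(t)$ from that bound, while you take $\bar u_i=C_ie^{-\beta_it}\phi_i$ directly; your ODE profile $P(t)$ for absorbing the predator's feedback plays the same role as the paper's device \eqref{3.16}--\eqref{3.17}. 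The needed comparison principle for the free boundary system is stated in the paper as Lemmas~\ref{l3.7} and \ref{l3.8}, which decouple the supersolution inequalities exactly as you anticipate.

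Two remarks. First, a sign slip: to obtain $u_1\ge w_1$ on $(-h_0,h_0)$ you must freeze $u_2$ at its \emph{upper} bound (not at $0$), since the usable inequality is $f_1(u_1,u_2)\ge u_1(a_1-b_1u_1-c_1M_2)$; the paper simply writes $a_1-b_1u_1-c_1u_2>-C$. Your freezings for $f_2$ are correct. Second, your spreading argument in (ii)(c) is genuinely more elementary than the paper's Lemma~\ref{l3.9}: the paper compares with an auxiliary single-species free boundary problem whose trajectory $b(t)$ is prescribed on $[0,1]$, whereas you use $h_\infty-g_\infty\le\ell_*$ and $g_\infty<-h_0$ to bound $h(t)<\ell_*-h_0$ a~priori, then integrate a $\mu$-independent lower bound on $(h-g)'$ over $[\tfrac12,1]$. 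Both work; yours avoids the auxiliary problem but is tailored to this situation, while Lemma~\ref{l3.9} is a reusable tool.
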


As we will see in Section 3 below,  $\ell_*$ depends only on $a_i,\,d_i$ and  $J_i$, $i=1, 2$. On the other hand, $\Lambda_*$ and $\Lambda^*$ depend also on $b_i$, $c_i$ and $u_{i0}$, $i=1,2$.

To determine the long-time behaviour of the solution when spreading happens, we restrict to two special cases:
 \begin{itemize}
\item[(a)] \underline{The weak competition case}: $(f_1, f_2)$ satisfies \eqref{1.4} with
${b_1}/{c_2}>{a_1}/{a_2}>{c_1}/{b_2}$.\vskip 4pt
\item[(b)] \underline{The weak predation case}: $(f_1, f_2)$ satisfies \eqref{1.5} with
$a_1b_1b_2>a_2b_1c_1+a_1c_1c_2$.
 \end{itemize}

\begin{theo}[Asymptotic limit]\label{th1.4}
Let $(u_1,u_2,g,h)$ be the unique solution of \eqref{1.2} and suppose $\dd\lim_{t\to\infty}[h(t)-g(t)]=\infty$. Then
\begin{itemize} \item[\rm (i)] in the weak competition case we have
 \[\lim_{t\to\infty}(u_1(t,x), u_2(t,x))=\left(\frac{a_1b_2-a_2c_1}{b_1b_2-c_1c_2}, \ \frac{a_2b_1-a_1c_2}{b_1b_2-c_1c_2}\right) \ \mbox{ locally uniformly for } x\in\mathbb R,\]
 \item[\rm (ii)] in the weak predation case we have
  \[\lim_{t\to\infty}(u_1(t,x), u_2(t,x))=\left(\frac{a_1b_2-a_2c_1}{b_1b_2+c_1c_2}, \ \frac{a_1c_2+a_2b_1}{b_1b_2+c_1c_2}\right) \ \mbox{ locally uniformly for } x\in\mathbb R.\]
  \end{itemize}
   \end{theo}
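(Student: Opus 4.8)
The plan is to prove both parts by an iterative squeezing argument, combining the comparison principle for \eqref{1.2} with the asymptotic behaviour of the single‑species nonlocal logistic problem obtained in \cite{CDLL}. Two preliminary facts are used throughout. First, since spreading happens we have $g(t)\to-\infty$ and $h(t)\to+\infty$, so that every bounded interval is eventually contained in $(g(t),h(t))$; since $u_i\equiv0$ outside $[g(t),h(t)]$ and $[g(t),h(t)]$ exhausts $\R$, all convergence statements below are understood, and proved, locally uniformly in $x$. Second, for fixed $d,\ap,\beta>0$ the scalar problem $v_t=d\int_{-L}^{L}J(x-y)v(t,y)\dy-dv+v(\ap-\beta v)$ on $(-L,L)$ (with $v$ extended by $0$ outside) has, for all large $L$, a unique positive steady state $V_L$, with $V_L\to\ap/\beta$ locally uniformly in $\R$ as $L\to\infty$, and the parabolic solution from a positive datum tends to $V_L$ as $t\to\infty$; this is the content of the single‑species analysis in \cite{CDLL}.

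Step 1 (first bounds). In both cases $f_1\le u_1(a_1-b_1u_1)$, so comparing $u_1$ with the spatially constant supersolution solving $\ol v{}'=\ol v(a_1-b_1\ol v)$ gives $\limsup_{t\to\infty}u_1\le a_1/b_1=:\ol A_1^{(0)}$. In the competition case the same argument gives $\limsup_{t\to\infty}u_2\le a_2/b_2=:\ol A_2^{(0)}$; in the predation case, feeding $u_1\le\ol A_1^{(0)}+\ep$ into $f_2=u_2(a_2-b_2u_2+c_2u_1)$ gives $\limsup_{t\to\infty}u_2\le (a_2+c_2\ol A_1^{(0)})/b_2=:\ol A_2^{(0)}$. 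Feeding the upper bound for $u_2$ into the $u_1$‑equation yields $f_1\ge u_1\big(a_1-c_1(\ol A_2^{(0)}+\ep)-b_1u_1\big)$ for $t$ large; the weak competition inequality $a_1/a_2>c_1/b_2$ (resp. the weak predation inequality $a_1b_1b_2>a_2b_1c_1+a_1c_1c_2$) is exactly what makes $a_1-c_1\ol A_2^{(0)}>0$, so comparison from below with $V_L$ on a large interval $(-L,L)\subset(g(t),h(t))$ — using that $u_1(T,\cdot)>0$ on compacts, by the strong maximum principle — gives $\liminf_{t\to\infty}u_1\ge (a_1-c_1\ol A_2^{(0)})/b_1=:\ud A_1^{(0)}>0$. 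A symmetric argument — using $a_2/a_1>c_2/b_1$ in the competition case, and the lower bound on $u_1$ in the predation case — produces $\liminf_{t\to\infty}u_2\ge\ud A_2^{(0)}>0$.

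Step 2 (iteration). Define recursively
\[\ol A_1^{(n+1)}=\frac{a_1-c_1\ud A_2^{(n)}}{b_1},\qquad \ud A_1^{(n+1)}=\frac{a_1-c_1\ol A_2^{(n)}}{b_1},\]
together with $\ol A_2^{(n+1)}=(a_2-c_2\ud A_1^{(n)})/b_2$, $\ud A_2^{(n+1)}=(a_2-c_2\ol A_1^{(n)})/b_2$ in the competition case, and $\ol A_2^{(n+1)}=(a_2+c_2\ol A_1^{(n)})/b_2$, $\ud A_2^{(n+1)}=(a_2+c_2\ud A_1^{(n)})/b_2$ in the predation case (the predator's bounds are driven monotonically by the prey's). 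Repeating the comparison arguments of Step 1 shows inductively that $\ud A_i^{(n)}\le\liminf_{t\to\infty}u_i\le\limsup_{t\to\infty}u_i\le\ol A_i^{(n)}$ for all $n$ and $i=1,2$. The weak competition (resp. weak predation) hypothesis guarantees $\ud A_i^{(n)}>0$ for every $n$, that $\{\ol A_i^{(n)}\}_n$ is nonincreasing and $\{\ud A_i^{(n)}\}_n$ nondecreasing, and — since it implies $b_1b_2>c_1c_2$ — that both sequences converge to the common limit $(u_1^*,u_2^*)$ recorded in the statement. Hence $u_i(t,x)\to u_i^*$ locally uniformly as $t\to\infty$.

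The main obstacle is not the (classical Lotka–Volterra) algebra of the iteration, but making each comparison step rigorous in the nonlocal, moving‑boundary framework. The delicate point is that the nonlocal term $d_i\int_{g(t)}^{h(t)}J_i(x-y)u_i(t,y)\dy$ samples $u_i$ over the whole current interval $(g(t),h(t))$, whereas beyond the first round the refined bounds on $u_j$ are available only on a fixed sub‑interval $[-L,L]$. One controls the discrepancy using that $\int_{|z|>L/2}J_i(z)\dz\to0$ as $L\to\infty$, so that on $[-L/2,L/2]$ the comparison functions differ from the genuine scalar logistic (super/sub)solutions on $(-L,L)$ only by a term that vanishes as $L\to\infty$; one then lets $L\to\infty$ and $\ep\to0$. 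One must also check that the lower bounds $\ud A_i^{(n)}$ never degenerate to $0$ along the iteration, which is precisely where the weak competition / weak predation conditions re‑enter at each step.
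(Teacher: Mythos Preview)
Your iteration-plus-comparison plan is exactly the paper's strategy, and you correctly isolate the only real difficulty: after the first round the improved bound on $u_j$ holds only on a compact set $[-L,L]$, so the refined differential inequality for $u_i$ is available only there. For the \emph{lower} bounds this is harmless—dropping the nonnegative contribution $\int_{(g,h)\setminus[-L,L]}J_i(x-y)u_i(t,y)\dy$ is legitimate—and that is precisely how the paper handles Lemma~\ref{l3.14}(ii).

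Your sketched fix for the \emph{upper} bound, however, does not close. You propose to use $\int_{|z|>L/2}J_i\to0$ to control the discrepancy on $[-L/2,L/2]$, but the additive error term $d_iM_i\int_{|y|>L}J_i(x-y)\dy$ coming from the tail of the convolution is small only well inside $[-L,L]$ and is $O(1)$ near $x=\pm L$; this blocks any direct comparison with a logistic problem posed on the fixed interval $[-L,L]$, since no valid supersolution emerges on the full interval and you give no barrier argument near $\pm L$. The paper sidesteps this boundary-layer issue altogether (Lemma~\ref{l3.14}(i)): rather than truncating the domain, it compares with a logistic problem on all of $\R$ whose coefficient $a_n(x)$ equals the refined value on $[-n,n]$ and a crude global bound $a_1+K_0+1$ outside $[-n-\tfrac12,n+\tfrac12]$, interpolated continuously in between. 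Because $-c_1u_2\ge -K_0$ globally, one has $a_1-c_1u_2(t,x)\le a_n(x)$ for \emph{every} $x\in\R$ once $t$ is large, so the whole-line supersolution is genuine; its unique positive steady state $\widetilde z_n$ then decreases in $n$ and, by Dini's theorem, converges locally uniformly to the desired constant as $n\to\infty$. Replacing your kernel-tail sketch by this device makes the upper-bound step rigorous; the rest of your proposal (including the Lotka--Volterra algebra showing the sequences contract under $c_1c_2<b_1b_2$) is correct and matches the paper.
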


\begin{rem}
We believe that the  condition $J_i(x)>0$ in $\mathbb R$ for $i=1,2$ in Theorem \ref{th1.2} is unnecessary, though our proof of $\dd\lim_{t\to\infty}\max_{g(t)\le x\le h(t)}u_i(t,x)=0$ in part (ii) of Theorem \ref{th1.2} makes essential use of this extra condition.
\end{rem}

\begin{rem}
When spreading happens, it is a challenging task to determine the long-time limit of the solution for systems with Lotka-Volterra growth terms in general. Many technical difficulties arising here do not occur in the corresponding free boundary problems with local diffusion. It appears that new techniques are needed to handle most of the cases not covered in Theorem {\rm\ref{th1.4}}.
\end{rem}

\begin{rem} When the nonlocal diffusion term in \eqref{1.2} is replaced by the usual local diffusion term $d_i \partial_{xx} u_i$, for competition and predator-prey type Lotka-Volterra growth functions $(f_1, f_2)$, the problem was investigated in \cite{GW, Wjde14, WZjdde14, ZhaoW16}. The results in Theorem {\rm\ref{th1.4}} indicate that when local diffusion is replaced by nonlocal diffusion for these special Lotka-Volterra systems with free boundary, the basic features of the model is not altered significantly. However, Theorem {\rm\ref{th1.3}\,(i)} suggests that the dispersal rates $d_1$ and $d_2$ play a more dominant role in determining whether the species can spread successfully
than in the local diffusion case  \cite{GW, Wjde14, WZjdde14, ZhaoW16}, reinforcing the phenomenon revealed in the single species case in \cite{CDLL}.
\end{rem}

The rest of the paper is arranged as follows. In Section 2 we prove Theorem \ref{th1.1}, namely  problem \eqref{1.2} has a unique global solution, by further developing the approach of \cite{CDLL}. As the situation here is more complicated, considerable changes are needed. In Section 3, we prove Theorems 1.2, 1.3 and \ref{th1.4}. Here we encounter a difficulty in understanding the vanishing case, which does not occur in the corresponding local diffusion systems with free boundary (see \cite{GW, Wjde14, WZjdde14, ZhaoW16}), or in the nonlocal diffusion model with a single species considered in \cite{CDLL}. To overcome this difficulty, we introduce a new technique; see details in the proof of Theorem \ref{th3.3}. The proof of the other conclusions is largely based on adequate adaptations of  techniques developed for the local diffusion case in \cite{Wjde14, WZjdde14, WZjdde17, ZW2015}.

\section{Global existence and uniqueness}
\setcounter{equation}{0} {\setlength\arraycolsep{2pt}

In this section we prove that, for any given initial value  $U_0:=(u_{10}, u_{20})$ satisfying \eqref{1.3}, problem (\ref{1.2}) has a unique solution defined for all $t>0$.
For convenience, we first introduce some notations. For given $h_0, T>0$, define
 \bess
\mathbb H_{h_0}^T&=&\left\{h\in C^1([0,T])~:~h(0)=h_0,
\; h(t) \mbox{ is strictly increasing}\right\},\\[1mm]
\mathbb G_{h_0}^T&=&\left\{g\in C^1([0,T])~:-g\in\mathbb{H}_{h_0}^T\right\}.
 \eess
For $g\in \mathbb G_{h_0}^T$, $h\in\mathbb H_{h_0}^T$ and $U_0=(u_{10}, u_{20})$ satisfying \eqref{1.3}, we denote
 \bess
 &D_T=D^T_{g, h}:=\left\{(t,x)\in\mathbb{R}^2:\, 0<t\leq T,~g(t)<x<h(t)\right\},&\\[2mm]
 &\mathbb X^T_{U_{0}}=\mathbb X^T_{U_{0},g,h}:=\left\{\varphi\in [C(\overline D_T)]^2:~\varphi\ge 0,\;  \varphi\big|_{t=0}=U_{0}(x),\; \varphi\big|_{x=g(t), h(t)}=0\right\}.&
 \eess
 Here by $\varphi=(\varphi_1,\varphi_2)\geq 0$ we mean $\varphi_1\geq 0$ and $\varphi_2\geq 0$ in $D_T$.

The following theorem, which contains the conclusion in Theorem \ref{th1.1}, is the main result of this section.

\begin{theo}\label{th2.1}\, For any given initial value $U_0:=(u_{10}, u_{20})$ satisfying \eqref{1.3},  problem \eqref{1.2} has a unique global solution $(u_1(t,x), u_2(t,x), g(t), h(t))$. Moreover, for any $T>0$, we have $(g, h)\in\mathbb G_{h_0}^T\times\mathbb H_{h_0}^T$, $(u_1, u_2)\in\mathbb{X}^T_{U_{0}, g, h}$, and
 \bes&&\left\{\begin{aligned}
 &0<u_1\le \max\left\{\|u_{10}\|_\infty,~k\right\}:=A_1 \ &&\mbox{in}\; \ D^T_{g, h},\\
 &0<u_2\le \max\left\{\|u_{20}\|_\infty,~\Theta(A_1)\right\}=:A_2 &&\mbox{in}\; \ D^T_{g, h},
 \end{aligned}\right.\label{2.1}\\[2mm]
 && g'(t)<0,\ \ \ h'(t)>0, \ \ \ \forall \ t>0.
 \lbl{2.2}\ees
\end{theo}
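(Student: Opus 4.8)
The plan is to establish local existence and uniqueness on a small interval $[0,T_0]$ by a contraction mapping argument, to derive the bounds \eqref{2.1}--\eqref{2.2} with constants independent of the length of the time interval, and then to continue the solution to all $t>0$; this follows the scheme of \cite{CDLL}, with modifications to accommodate the coupling between $u_1$ and $u_2$. The first step is to reformulate \eqref{1.2}. Since the diffusion is nonlocal, the first line of \eqref{1.2} contains no $x$-derivatives, so for each fixed $x$ it is an ODE in $t$, equivalent to the integral identity
\[
u_i(t,x)=u_{i0}(x)+\int_0^t\Big[d_i\int_{g(s)}^{h(s)}J_i(x-y)u_i(s,y)\dy-d_iu_i(s,x)+f_i(s,x,u_1,u_2)\Big]\ds
\]
(all functions extended by $0$ outside the relevant intervals), while the free boundary conditions are equivalent to
\[
h(t)=h_0+\int_0^t\sum_{i=1}^2\mu_i\int_{g(s)}^{h(s)}\!\int_{h(s)}^{\infty}\!J_i(x-y)u_i(s,x)\dy\dx\,\ds
\]
and the analogous formula for $g(t)$.

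\textbf{Local existence and uniqueness.} I would define an operator acting on $(g,h,u_1,u_2)$ with $(g,h)\in\mathbb{G}_{h_0}^{T}\times\mathbb{H}_{h_0}^{T}$ and $(u_1,u_2)\in\mathbb{X}^{T}_{U_0,g,h}$: given such a quadruple, first produce $(\tilde g,\tilde h)$ from the boundary integral formulas above, and then, on the domain $D^{T}_{\tilde g,\tilde h}$, solve the integral equations for the $u_i$ by the Banach fixed point theorem in $C$ (using the local Lipschitz condition \textbf{(f)} and $\sup_{\mathbb R}J_i<\infty$ to bound the right-hand side), obtaining $(\tilde u_1,\tilde u_2)\in\mathbb{X}^{T}_{U_0,\tilde g,\tilde h}$. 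For $T=T_0$ small this operator leaves invariant a suitable closed bounded set and is a contraction in an appropriate metric; its unique fixed point is the desired solution on $[0,T_0]$, with the stated regularity. The delicate point, as in \cite{CDLL}, is that the spatial domain itself moves with $(g,h)$, so one must control how the various integrals change under perturbation of the endpoints; the uniform continuity of $J_i$ from \textbf{(J)} and the explicit form of the boundary equations make these estimates tractable.

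\textbf{A priori bounds, positivity and monotonicity.} Here the structure conditions \textbf{(f1)}, \textbf{(f2)} enter. For the bound on $u_1$, note that the constant $A_1=\max\{\|u_{10}\|_\infty,k\}$ is a supersolution of the $u_1$-equation: $d_1\int_{g}^{h}J_1(x-y)A_1\dy-d_1A_1\le 0$ since $\int_{\mathbb R}J_1=1$, and $f_1(t,x,A_1,u_2)<0$ by \textbf{(f1)} (as $A_1\ge k$); a comparison argument, carried out in integral/Gr\"onwall form because there is no instantaneous smoothing, yields $u_1\le A_1$. With this in hand, \textbf{(f2)} with $K=A_1$ shows that $A_2=\max\{\|u_{20}\|_\infty,\Theta(A_1)\}$ is a supersolution of the $u_2$-equation, hence $u_2\le A_2$. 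For strict positivity, since $\int_{g}^{h}J_iu_i\dy\ge0$ and $|f_i|\le L u_i$ by \textbf{(f)}, the $u_i$-equation gives $u_{it}\ge-(d_i+L)u_i$, so $u_i(t,x)\ge u_{i0}(x)e^{-(d_i+L)t}>0$ on $(-h_0,h_0)$; because $J_i(0)>0$, $J_i$ is positive on a neighbourhood of $0$, and a propagation argument then transmits positivity to all of $D^{T}_{g,h}$. Consequently, for $x$ near $h(t)$ and $y$ slightly larger than $h(t)$ the integrand $J_i(x-y)u_i(t,x)$ is positive, so $h'(t)>0$; symmetrically $g'(t)<0$, which is \eqref{2.2}.

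\textbf{Global extension and the main obstacle.} Since $A_1,A_2$ depend only on the data and not on $T$, while $\int_{\mathbb R}J_i=1$ gives
\[
h'(t)-g'(t)\le\sum_{i=1}^2\mu_i\int_{g(t)}^{h(t)}u_i(t,x)\dx\le(\mu_1A_1+\mu_2A_2)\,(h(t)-g(t)),
\]
we get $h(t)-g(t)\le 2h_0\,e^{(\mu_1A_1+\mu_2A_2)t}$ on the existence interval, so the free boundaries remain finite for all finite $t$; a standard continuation argument together with the uniqueness on short intervals then produces the unique global solution satisfying \eqref{2.1}--\eqref{2.2}, proving Theorem \ref{th2.1} and hence Theorem \ref{th1.1}. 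I expect the two genuinely hard steps to be the fixed point construction in the presence of the moving domain and the justification of strict positivity together with the sup-bounds: unlike in the local-diffusion case, no pointwise parabolic maximum principle is available, so every comparison must be recast as an integral estimate against explicit super- and subsolutions, and even the strong positivity of the $u_i$ requires a careful iteration exploiting $J_i(0)>0$.
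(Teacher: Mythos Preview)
Your strategy is correct and essentially the same as the paper's: local existence via a contraction argument, $T$-independent a priori bounds from \textbf{(f1)}--\textbf{(f2)}, and continuation. The paper organizes the contraction slightly differently, and this difference is worth noting: it first proves (Lemma~\ref{l2.3}) that for \emph{any} fixed $(g,h)\in\mathbb{G}_{h_0}^T\times\mathbb{H}_{h_0}^T$ the reaction--diffusion system has a unique solution $w_{g,h}\in\mathbb{X}^T_{U_0,g,h}$ already satisfying \eqref{2.1}, and then runs the contraction only on the map $\mathcal{F}:(g,h)\mapsto(\tilde g,\tilde h)$ (defined by inserting $w_{g,h}$ into the boundary integrals) in a closed subset $\Sigma_T\subset\mathbb{G}_{h_0}^T\times\mathbb{H}_{h_0}^T$. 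Your scheme instead iterates on the full quadruple $(g,h,u_1,u_2)$, which also works but makes the Lipschitz estimates on the moving domain heavier, since one must simultaneously control the dependence of $(\tilde g,\tilde h)$ on $(u_1,u_2)$ and of $(\tilde u_1,\tilde u_2)$ on $(\tilde g,\tilde h)$; the paper's decoupling buys a cleaner Step~2. One small technical correction: \textbf{(f1)} gives $f_1<0$ only for $u_1>k$, not $u_1\ge k$, so when $A_1=k$ your constant-supersolution argument does not close as stated; the paper handles this by proving $u_1\le A_1+\varepsilon$ for every $\varepsilon>0$ via a first-touching argument (see the proof of \eqref{2.7}). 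Finally, the paper does supply a nonlocal maximum principle (Lemma~\ref{l2.2}) which yields strict positivity of $u_i$ in $D^T_{g,h}$ directly, so your Gr\"onwall-plus-propagation sketch, while correct in spirit, can be replaced by a single appeal to that lemma.
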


The rest of this section is devoted to the proof of Theorem \ref{th2.1}.
The following {\it Maximum Principle} will be used frequently in our analysis to follow.

\begin{lem}[Maximum Principle \cite{CDLL}]\label{l2.2}
Assume that $J$ satisfies {\rm \bf (J)}, and $(g, h)\in\mathbb G_{h_0}^T\times\mathbb H_{h_0}^T$. Suppose that $\psi, \psi_t\in C(\overline{D}_T)$ and satisfies, for some $c\in L^\infty (D_T)$,
 \bess\left\{\begin{aligned}
&\psi_t(t,x)\ge d\int_{g(t)}^{h(t)}J(x-y)\psi(t,y)\dy-du +c(t,x)\psi, && t\in (0, T],\ g(t)<x<h(t),\\
& \psi(t, g(t)) \geq 0,\ \psi(t, h(t)) \geq 0, && t>0,\\
&\psi(0,x)\ge0,  && |x|\le h_0.
 \end{aligned}\right.\eess
Then $\psi\ge0$ on $\overline{D}^T_{g, h}$. Moreover, if $\psi(0,x)\not\equiv0$ in $[-h_0, h_0]$, then $\psi>0$ in $D^T_{g, h}$.
\end{lem}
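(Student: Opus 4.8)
The plan is to prove the two assertions separately: the nonnegativity $\psi\ge 0$ by a contradiction argument at a minimum point (after the usual exponential shift), and the strong positivity by propagating the zero set of $\psi$ first backward in time along vertical segments and then sideways in the space variable.

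For the first assertion, fix $\lambda>\|c\|_{L^\infty(D_T)}$ and set $w:=e^{-\lambda t}\psi$. Substituting, the differential inequality for $\psi$ becomes
\[ w_t \ge d\int_{g(t)}^{h(t)}J(x-y)\,w(t,y)\,\dy-\big(d+\lambda-c(t,x)\big)\,w , \]
and the boundary/initial sign conditions pass to $w$ on the parabolic boundary $\big(\{0\}\times[-h_0,h_0]\big)\cup\{x=g(t)\}\cup\{x=h(t)\}$. Since $g,h\in C([0,T])$, the set $\overline D_T$ is compact, so $w$ attains a minimum $m$ on it. If $m<0$, then $m$ is not attained on the parabolic boundary, hence it is attained at some $(t_0,x_0)$ with $0<t_0\le T$ and $g(t_0)<x_0<h(t_0)$, where $w_t(t_0,x_0)\le 0$ (a left derivative when $t_0=T$). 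Since $w(t_0,\cdot)\ge m$, $m<0$, and $\int_{\R}J=1$, one has $\int_{g(t_0)}^{h(t_0)}J(x_0-y)w(t_0,y)\,\dy\ge m$; plugging this into the inequality at $(t_0,x_0)$ gives $0\ge w_t(t_0,x_0)\ge(c(t_0,x_0)-\lambda)m>0$, a contradiction. Hence $\psi\ge 0$ on $\overline D_T$.

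For the strong assertion, suppose $\psi(0,\cdot)\not\equiv0$ but, for contradiction, $\psi(t_1,x_1)=0$ at some point with $0<t_1\le T$ and $g(t_1)<x_1<h(t_1)$. Because $h$ is strictly increasing and $g$ strictly decreasing, the sections $(g(t),h(t))$ are nested and increasing in $t$; for each admissible $x$ write $\sigma_x\in[0,t_1)$ for the time at which $x$ first enters the section, so $\sigma_x=0$ when $|x|<h_0$. Along the segment $\{(t,x_1):\sigma_{x_1}<t\le t_1\}$ put $\phi(t):=\psi(t,x_1)$; then $\phi\ge0$, $\phi(t_1)=0$, and since $\int_{g(t)}^{h(t)}J(x_1-y)\psi(t,y)\,\dy\ge0$ (using $\psi\ge0$ from the first part) the inequality yields $\phi'(t)\ge-(d+\|c\|_{L^\infty})\phi(t)$. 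Thus $e^{(d+\|c\|_{L^\infty})t}\phi(t)$ is nondecreasing and vanishes at $t_1$, which forces $\phi\equiv0$ on $[\sigma_{x_1},t_1]$; consequently $\psi_t(t,x_1)=0$ there and the inequality collapses to $\int_{g(t)}^{h(t)}J(x_1-y)\psi(t,y)\,\dy=0$. Since $J(0)>0$ and $J,\psi\ge0$, this gives $\psi(t,\cdot)\equiv0$ on $(x_1-\delta,x_1+\delta)\cap(g(t),h(t))$ for some fixed $\delta>0$, and letting $t\uparrow t_1$ shows the set $S:=\{x\in(g(t_1),h(t_1)):\psi(t_1,x)=0\}$ is open. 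As $S$ is also relatively closed and contains $x_1$, connectedness of $(g(t_1),h(t_1))$ forces $\psi(t_1,\cdot)\equiv0$. Finally, applying the vertical-segment step to each $x\in(-h_0,h_0)$, where $\sigma_x=0$, gives $\psi(0,x)=0$, so $\psi(0,\cdot)\equiv0$ on $[-h_0,h_0]$ by continuity --- contradicting the hypothesis.

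The first assertion is routine. The delicate part is the propagation argument for the strong conclusion: one must check that the Gronwall step and the sideways-spreading step remain valid all the way up to the free boundary (where $\psi$ vanishes in any case), and that the nesting of the sections $(g(t),h(t))$ indeed allows every interior zero of $\psi$ to be traced back to the bottom $t=0$. This is mostly bookkeeping; the one structural ingredient that is genuinely essential, beyond $\psi\ge0$, is the condition $J(0)>0$ from {\bf (J)}, which supplies the fixed spreading radius $\delta$.
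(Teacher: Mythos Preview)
The paper does not give its own proof of this lemma; it is quoted verbatim from \cite{CDLL} and used as a black box. Your argument is therefore not a comparison but a standalone proof, and it is correct: the exponential shift plus minimum-point contradiction for $\psi\ge 0$ is the standard device, and your backward Gronwall step combined with the $J(0)>0$ spreading argument to force the zero set open is exactly how the strong conclusion is obtained in \cite{CDLL}. One small point worth tightening: when you say ``$\psi_t(t,x_1)=0$ there'', you are using that $\psi(\cdot,x_1)\equiv 0$ on $[\sigma_{x_1},t_1]$ together with the assumed continuity $\psi_t\in C(\overline D_T)$, so the inequality at those points reads $0\ge d\int J\psi\ge 0$; you have this implicitly, but stating it removes any ambiguity about whether you are differentiating an identically-zero function or invoking the hypothesis on $\psi_t$.
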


The following result will play a crucial role in the proof of Theorem \ref{th2.1}.

\begin{lem}\label{l2.3}\, For any $T>0$ and $(g, h)\in\mathbb G_{h_0}^T\times\mathbb H_{h_0}^T$, the problem
 \bes\left\{\begin{array}{lll}
w_{it}=d_i\dd\int_{g(t)}^{h(t)}J_i(x-y)w_i(t,y)\dy&&\\[2mm]
 \hspace{1.2cm}-d_iw_i(t,x)+f_i(t,x,w_1, w_2), \qquad
&& 0<t\le T,~g(t)<x<h(t),\\[3mm]
w_i(t,g(t))=w_i(t,h(t))=0, &&0\le t\le T,\\[1mm]
w_i(0,x)=u_{i0}(x),\ \ &&|x|\le h_0,\\
i=1,\,2&&
 \end{array}\right.
 \label{2.3}\ees
has a unique solution $w_{g,h}=(w_{1,g,h}, w_{2,g,h})\in\mathbb{X}^T_{U_{0}, g, h}$, and $w_{g,h}$ satisfies \eqref{2.1}.
 \end{lem}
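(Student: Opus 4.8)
\textbf{Proof proposal for Lemma \ref{l2.3}.}

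The plan is to solve \eqref{2.3} for fixed $(g,h)$ by a fixed point argument in the Banach space $[C(\overline D_T)]^2$, treating the nonlocal terms and the growth terms as a perturbation, and then to upgrade to a global-in-time (in the sense of the whole interval $[0,T]$) solution by an a priori bound that rules out blow-up. First I would rewrite the system \eqref{2.3} along characteristics: since there is no $x$-derivative, for each fixed $x$ the equations are ODEs in $t$ coupled only through the integral operator. Concretely, for $\varphi=(\varphi_1,\varphi_2)\in\mathbb X^T_{U_0,g,h}$ define
\[
\Gamma_i(\varphi)(t,x)=e^{-d_it}u_{i0}(x)+\int_0^t e^{-d_i(t-s)}\Big(d_i\int_{g(s)}^{h(s)}J_i(x-y)\varphi_i(s,y)\dy+f_i(s,x,\varphi_1,\varphi_2)\Big)\ds,
\]
understood so that $\Gamma_i(\varphi)$ is extended by $0$ outside $[g(t),h(t)]$; a fixed point of $\Gamma=(\Gamma_1,\Gamma_2)$ is exactly a solution of \eqref{2.3}. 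One checks $\Gamma$ maps $\mathbb X^T_{U_0,g,h}$ into itself (the boundary and initial conditions hold by construction; nonnegativity follows once we know the map preserves it, which is handled together with the contraction estimate), and that $\Gamma$ is a contraction on the subset $\{\|\varphi\|_\infty\le A_1+A_2+1\}$ provided $T$ is small, using hypothesis {\bf(f)} (the Lipschitz bound $L$) and $\int_{\R}J_i=1$ from {\bf(J)} to estimate $\|\Gamma(\varphi)-\Gamma(\tilde\varphi)\|_\infty\le T(d_1+d_2+2L)\|\varphi-\tilde\varphi\|_\infty$. This gives a unique local solution; nonnegativity of this local solution follows from Lemma \ref{l2.2} applied component-wise after freezing the other component (using {\bf(f)}, which gives $f_i = c_i(t,x)\,w_i$ with $c_i\in L^\infty$ on the relevant range), or alternatively directly from a standard positivity argument on the integral equation.

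The second step is the a priori bound \eqref{2.1}, which simultaneously shows the local solution cannot escape the ball and hence extends to all of $[0,T]$. For $u_1$: by {\bf(f1)}, whenever $u_1>k$ we have $f_1<0$, and one compares $u_1$ with the constant $A_1=\max\{\|u_{10}\|_\infty,k\}$; more precisely the function $\psi=A_1-u_1$ satisfies a differential inequality of the form required by Lemma \ref{l2.2} (here one uses that the nonlocal integral of a nonnegative function is nonnegative and bounded appropriately, together with $f_1\le 0$ on $\{u_1>k\}$ and a suitable bound $f_1\le r u_1$ on $\{0<u_1\le k\}$ from {\bf(f1)}), yielding $u_1\le A_1$. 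With $A_1$ in hand, {\bf(f2)} gives $\Theta(A_1)$ such that $f_2<0$ when $u_2\ge\Theta(A_1)$ (and $0\le u_1\le A_1$), and the same comparison argument with $A_2=\max\{\|u_{20}\|_\infty,\Theta(A_1)\}$ gives $u_2\le A_2$. Positivity ($u_i>0$ in $D_T^{g,h}$) comes from the second assertion of Lemma \ref{l2.2} since $u_{i0}\not\equiv0$ by \eqref{1.3}. Because $A_1,A_2$ are independent of $T$, the standard continuation argument (if $[0,T_{\max})$ is the maximal interval and $T_{\max}<T$, the bound lets us restart at a time close to $T_{\max}$ and extend past it) shows the solution exists on all of $[0,T]$, and uniqueness on $[0,T]$ follows from uniqueness on small intervals together with this bound (on any subinterval the solutions lie in a fixed ball, where $\Gamma$ is a contraction, so they coincide).

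The main obstacle I expect is not the fixed point step but the rigorous justification of the comparison/a priori bound in the nonlocal setting: one must verify that $A_1-u_1$ (and $A_2-u_2$) genuinely satisfies the hypotheses of the Maximum Principle Lemma \ref{l2.2}, i.e. that the constant supersolution, when plugged into the nonlocal operator over the $(g(t),h(t))$-window rather than the whole line, still dominates — here $d_i\int_{g(t)}^{h(t)}J_i(x-y)A_i\dy\le d_iA_i$ because $\int_{\R}J_i=1$, so the "diffusion" term is favorable, and the sign conditions in {\bf(f1)}, {\bf(f2)} must be combined carefully near the threshold values $k$ and $\Theta(A_1)$. A secondary technical point is that the regularity $w_i,\partial_t w_i\in C(\overline D_T)$ required to invoke Lemma \ref{l2.2} must be read off from the integral equation (the right-hand side of the fixed point equation is $C^1$ in $t$ once $\varphi$ is continuous, since $g,h\in C^1$ and $J_i$ is continuous and bounded by {\bf(J)}), so that the ODE formulation and the PDE formulation \eqref{2.3} are equivalent. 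Everything else is routine bookkeeping with the Lipschitz constants from {\bf(f)}.
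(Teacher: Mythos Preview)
Your fixed point map $\Gamma$ does not actually map $\mathbb X^T_{U_0,g,h}$ into itself, because it fails to encode the boundary condition for points $x\notin[-h_0,h_0]$. Take $x\in(h_0,h(T)]$ and let $t_x\in(0,T]$ be the unique time with $h(t_x)=x$. The equation in \eqref{2.3} only holds for $t>t_x$, and at $t=t_x$ the condition $w_i(t_x,x)=0$ must be satisfied. But your Duhamel formula integrates from $0$, and for $s<t_x$ the nonlocal term $d_i\int_{g(s)}^{h(s)}J_i(x-y)\varphi_i(s,y)\dy$ is typically strictly positive (the point $x$ lies outside $[g(s),h(s)]$, but $J_i$ still sees mass inside), while $f_i(s,x,0,0)=0$. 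Hence
\[
\Gamma_i(\varphi)(t_x,x)=\int_0^{t_x}e^{-d_i(t_x-s)}\,d_i\!\int_{g(s)}^{h(s)}\!J_i(x-y)\varphi_i(s,y)\dy\ds>0,
\]
so $\Gamma_i(\varphi)$ is discontinuous across the free boundary and does not lie in $\mathbb X^T_{U_0,g,h}$. In short, ``extend by $0$ outside $[g(t),h(t)]$'' is incompatible with your integral representation. The paper fixes exactly this: for each $x$ it starts the parametrised ODE at time $t_x$ with initial value $\tilde u_{i0}(x)$ (equal to $0$ when $|x|>h_0$), so the boundary condition is built in, and then runs the contraction argument on the resulting map $\varphi\mapsto p^\varphi$.

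A secondary point: for the a priori bound \eqref{2.1} you propose to apply Lemma~\ref{l2.2} to $\psi=A_i-w_i$, but the constant $A_i$ does not satisfy the clean inequality required there (the cross term $d_iA_i\big(\int_{g(t)}^{h(t)}J_i(x-y)\dy-1\big)$ has the right sign, but you still have to absorb $-f_i$ into a term of the form $c(t,x)\psi$, which is awkward precisely near $w_i=A_i$). The paper instead uses a direct first-touching argument: assume $w_1$ first reaches $A_1+\ep$ at $(t_0,x_0)$, use $w_{1t}(t_0,x_0)\ge0$ and $f_1\le0$ there, and then exploit that $w_1(t_0,\cdot)$ is strictly below $A_1+\ep$ near $x=g(t_0),h(t_0)$ to get a strict inequality in the nonlocal term and a contradiction. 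This is cleaner and avoids the bookkeeping you anticipate.
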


\begin{proof} The idea of the proof comes from \cite{CDLL}. We break the proof into three steps.
\smallskip

{\bf Step 1:} {\it A parametrized ODE problem.} Define
 \begin{align*}
 f^*_i(t,x,u_1,u_2)=&\;\left\{\begin{aligned} &f_i(t,x,u_1,u_2) \ \ &&\mbox{ if } \ u_1, u_2\ge 0,\\
 &0&&\mbox{ if } \ u_1, u_2\le 0, \end{aligned}\right. \\
 f^*_1(t,x,u_1,u_2)=&\;f_1(t,x,u_1,0)\ \ \mbox{ if } \ u_2\le 0,\ u_1>0,\\
 f^*_2(t,x,u_1,u_2)=&\; f_2(t,x,0,u_2)\ \ \mbox{ if } \ u_1\le 0,\ u_2>0.
 \end{align*}
For any  given $x\in[g(T),h(T)]$, set
 \begin{align*}
\tilde u_{10}(x)=&\;\left\{\begin{aligned}
&0,& &|x|>h_0,\\
&u_{10}(x),& &|x|\le h_0,
\end{aligned}\right.\;\;\;
~~~\tilde u_{20}(x)=\left\{\begin{aligned}
&0,& &|x|>h_0,\\
&u_{20}(x),& &|x|\le h_0,
\end{aligned}\right.\\[2mm]
t_x=&\;\left\{\begin{aligned}
&t_{x,g}& &\mbox{ if }\ x\in[g(T),-h_0), \ x=g(t_{x,g}),\\
&0& &\mbox{ if } \ |x|\le h_0,\\
&t_{x,h}& &\mbox{ if } \ x\in(h_0,h(t)], \ x=h(t_{x,h}).
\end{aligned}
\right.
\end{align*}
Clearly $t_x=T$ for $x=g(T)$ or $x=h(T)$, and $t_x\in [0,T)$ for $x\in (g(T), h(T))$.

For any given $0<s\le T$ and $\varphi=(\varphi_1, \varphi_2)\in\mathbb{X}^s_{U_{0}}$, we first consider the initial value problem of the following ordinary differential system with parameter $x\in(g(s),h(s))$:
 \bes\label{2.4}
\left\{\begin{array}{lll}
p_{it}=d_i\dd\int_{g(t)}^{h(t)}J_i(x-y)\varphi_i(t,y)\dy-d_ip_i+f^*_i(t, x, p), \ \
& t_x<t\le s,\\[2mm]
p_i(t_x,x)=\tilde u_{i0}(x), \ \ &g(s)<x<h(s),\\
i=1,\, 2.
\end{array}\right.
 \ees
Denote
 \begin{align*}
F_i(t, x, p)=d_i\dd\int_{g(t)}^{h(t)}J_i(x-y)\varphi_i(t,y)\dy-d_ip_i+f^*_i(t, x, p),\ \ i=1,2,\\
 K_\varphi=1+A_1+A_2+\|\varphi_1,\,\varphi_2\|_{C(\overline D_s)}, \ \ L_\varphi=\max\big\{d_1, d_2\big\}+L(K_\varphi),
 \end{align*}
where $A_1$ and $A_2$ are given by \eqref{2.1}. Then for any $p_i, q_i\in (-\infty, K_\varphi]$,
$i=1,2$, we have
 \begin{align*}
 |F_i(t, x, p_1, p_2)-F_i(t, x, q_1, q_2)|
 \le&\; |f^*_i(t, x, p_1, p_2)-f^*_i(t, x, q_1, q_2)|
 +d_i|p_i-q_i|\\
 \le&\; L_\varphi\left(|p_1-q_1|+|p_2-q_2|\right),\ \ i=1,2.
 \end{align*}
In other words, the function $F_i(t,x,p)$ is Lipschitz continuous in $p=(p_1,p_2)$ for
$p_1,p_2\in (-\infty, K_\varphi]$ with Lipschitz constant $L_\varphi$, uniformly for $t\in [0, T]$
and $x\in [g(s), h(s)]$, $i=1,2$. Additionally, $F_i(t,x,p)$ is continuous in
all its variables in this range, $i=1,2$. Based on the Fundamental Theorem of ODEs, for every fixed $x\in (g(s), h(s))$, the problem \eqref{2.4} has a unique
solution $p^\varphi=(p_1^\varphi, p_2^\varphi)$ defined in some interval
$[t_x,T_x)$.

We will prove that $t\to p^\varphi(t,x)(t,x)$ can be uniquely extended to $[t_x, s]$. Clearly, it suffices to show that if $p^\varphi(t,x)$ is defined for $t\in[t_x, t_0]$ with $t_0\in (t_x, s]$, then
 \begin{equation}\label{2.5}
0\leq p_1^\varphi(t,x), \ p_2^\varphi(t,x)<K_\varphi \ \ \mbox{ for }\ t\in (t_x, t_0].
 \end{equation}
We first show that $p_1^\varphi(t,x)<K_\varphi$ in $(t_x, t_0]$. If this inequality is not true, then, by $p_1^\varphi(t_x,x)=\tilde u_{10}(x)\leq \|\varphi_1\|_{C(\overline D_s)}<K_\varphi$, there exists $t'\in (t_x, t_0]$ such that
$p_1^\varphi(t,x)<K_\varphi$ in $(t_x, t')$ and $p_1^\varphi (t',x)=K_\varphi$. It follows that $(p_1^\varphi)_t(t',x)\geq 0$ and $f_1^*(t', x, p_1^\varphi(t',x), p_2^\varphi(t',x))<0$ as
$K_\varphi>k$. Hence from the  equation satisfied by $p_1^\varphi$ we can deduce
 \[d_1K_\varphi=d_1p_1^\varphi(t',x)\leq d_1\int_{g(t')}^{h(t')}J_1(x-y)\varphi_1(t',y)\dy\leq d_1\|\varphi_1\|_{C(\overline{D}_T)}\leq d_1(K_\varphi-1).\]
This is a contradiction. Similarly, $p_2^\varphi(t,x)<K_\varphi$ in $(t_x, t_0]$.

We now prove the first inequality in \eqref{2.5}. Since
 \[|f^*_1(t,x,p_1,p_2^\varphi)|=|f^*_1(t,x,p_1,p_2^\varphi)-f^*_1(t,x,0,p_2^\varphi)|\leq L(K_\varphi)|p_1|,\ \ \forall\ p_1\in(-\infty, K_\varphi],\]
it follows that
 \[(p_1^\varphi)_t\geq c(t,x)p_1^\varphi+d_1\int_{g(t)}^{h(t)}J_1(x-y)\varphi_1(t,y)\dy
 \geq c(t,x)p_1^\varphi ,\ \ \forall\ t\in [t_x, t_0],\]
where $c(t,x)=-L(K_\varphi)-d_1$ when $p_1^\varphi(t,x)\ge 0$, and $c(t,x)=L(K_\varphi)$ when $p_1^\varphi(t,x)\le 0$. Notice $p_1^\varphi(t_x,0)=\tilde u_{10}(x)\geq 0$, the above inequality immediately gives $p_1^\varphi(t,x)\geq 0$ in $[t_x, t_0]$. Similarly, $p_2^\varphi(t,x)\geq 0$ in $[t_x, t_0]$. We have thus proved \eqref{2.5}, and therefore the solution $p^\varphi(t,x)$ of \eqref{2.4} is uniquely defined for $t\in [t_x, s]$.

\smallskip
{\bf Step 2:} {\it A fixed point problem.} Recall $p^\varphi(0,x)=U_0(x)$ for $|x|\le h_0$,
and $p^\varphi(t,x)=0$ for  $x\in\{g(t), h(t)\}$ and $t\in [0, s]$. Moreover, by the continuous dependence of the ODE solution on parameters, $p^\varphi$ is continuous in $\overline D_s$, and so $p^\varphi\in\mathbb{X}^s_{U_0}$.  Define a mapping $\Gamma_s: \mathbb{X}^s_{U_0}\to \mathbb{X}^s_{U_0}$ by
 \[\Gamma_s \varphi=p^{\varphi}.\]
Clearly, if $\Gamma_s\varphi=\varphi$ then $\varphi$ solves \eqref{2.4}, and vice versa.

We will show that $\Gamma_s$ has a unique fixed point in $\mathbb{X}^s_{U_0}$ when $0<s\ll 1$. This conclusion will be proved by the contraction mapping theorem, i.e., it will be shown that for such $s$, $\Gamma_s$ is a contraction on a closed subset of $\mathbb{X}^s_{U_0}$, and any fixed point of $\Gamma_s$ in $\mathbb{X}^s_{U_0}$ lies in this closed subset.

Take $C=\max\big\{2\|u_{10}\|_\infty,\, 2\|u_{20}\|_\infty,\, A_1,\,A_2\big\}$ and define
  \[\mathbf{X}^s_C:=\big\{\varphi=(\varphi_1,\varphi_2)\in\mathbb{X}^s_{U_0}: \; \|\varphi_1,\varphi_2\|_{C(\overline D_s)}\leq C\big\}.\]
Clearly $\mathbf{X}^s_C$ is a closed subset of $\mathbb{X}^s_{U_0}$. We will find a $\delta>0$ small depending on $C$ such that for every $s\in(0, \delta]$, $\Gamma_s$ maps $\mathbf{X}^s_C$ into itself, and is a contraction.

Let $\varphi\in\mathbf{X}^s_C$ and denote $p^\varphi=\Gamma_s \varphi$. Then $p^\varphi$ solves \eqref{2.4}, and so \eqref{2.5} holds with $t_0$ replaced by $s$. Thus, $f_i^*(t, x, p^\varphi)=f_i(t, x, p^\varphi)$ for $i=1,2$. Now we prove that for $0<s\ll 1$,
 \bes
 p_1^\varphi(t,x), \, p_2^\varphi(t,x)\leq C ,\ \ \forall\ g(s)\le x\le h(s),\ t_x\le t\le s,
 \lbl{2.6}\ees
which is equivalent to $\|p_1^\varphi,\,p_2^\varphi\|_{C(\overline D_s)}\leq C$. Note that  $p_1^\varphi, p_2^\varphi\ge 0$ implies $f_1(t, x, p_1^\varphi, p_2^\varphi)\leq rp_1^\varphi$. It follows from the first equation of \eqref{2.4} that, for $ t\in [t_x, s]$ and $x\in (g(s), h(s))$,
 \[(p_1^\varphi)_t\leq d_1\int_{g(t)}^{h(t)}J_1(x-y)\varphi_1(t,y)\dy+rp_1^\varphi\leq d_1\|\varphi_1\|_{C(\overline D_s)}+rp_1^\varphi.\]
Multiplying this inequality by $e^{-rt}$ and then integrating from $t_x$ to $t$ we obtain
 \begin{align*}
  p_1^\varphi(t, x)\le e^{r(t-t_x)}p_1^\varphi(t_x,x)+ d_1\int_{t_x}^t e^{r(t-\tau)}{\rm d}\tau \|\varphi_1\|_{C(\overline D_s)}
 \leq \|u_{10}\|_\infty e^{rs}+d_1Cs e^{rs }.
 \end{align*}
Take $\delta_1>0$ such that $d_1\delta_1 e^{r\delta_1}\leq 1/4$ and $e^{r\delta_1}\leq 3/2$. Then, for $s\in (0,\delta_1]$, we have
 \[ p_1^\varphi(t,x)\leq (8 \|u_{10}\|_\infty+C)/4\leq C \ \ \mbox{ in } \ D_s.\]
This combined with the properties of $f_2$ allows us to derive
 \[f_2(t, x, p_1^\varphi, p_2^\varphi)\le L(C, \Theta(C))p_2^\varphi:=L^* p_2^\varphi,\ \ \forall\ x\in (g(s), h(s)), \ t\in [t_x, s].\]
Similar to the above, take $\delta_2>0$ satisfying $d_2\delta_2 e^{L^*\delta_2}\leq 1/4$ and $ e^{L^*\delta_2}\leq 3/2$, then
 \[p_2^\varphi(t,x)\leq (8\|u_{20}\|_\infty+C)/4\leq C \ \ \mbox{ in } \ D_s\]
for all $s\in (0,\delta_2]$. Set $\delta=\min\{\delta_1, \delta_2\}$. Then \eqref{2.6} holds for $s\in (0,\delta]$.

\sk Thus $p^\varphi=\Gamma_s\varphi\in \mathbf X^s_C$, as desired.
Next we show that by shrinking $\delta$ if necessary, $\Gamma_s$ is a contraction on
$\mathbf X^s_C$ for $s\in (0, \delta]$. Let $\varphi,\rho\in\mathbf X^s_C$, then $p_i=p_i^\varphi-p_i^\rho$ satisfy
 \bess\left\{\begin{aligned}
 &p_{1t}+ap_1=d_1\dd\int_{g(t)}^{h(t)}J_1(x-y)\left(\varphi_1-\rho_1\right)(t,y)\dy
+bp_2, \ \ &&t_x<t\le s,~g(s)<x<h(s),\\
 &p_i(t_x,x)=0,&& g(s)<x<h(s),
 \end{aligned}\right.\eess
where $a=d_1-\partial_{p_1}f_1(t, x, \tilde p_1, p_2^\varphi)$, $b=\partial_{p_2}f_1(t, x, p_1^\rho, \tilde p_2)$ for some $\tilde p_i$ between $p_i^{\varphi}$ and $p_i^\rho$, $i=1,2$. Then $\|a,\,b\|_\infty\le d_1+L(C):=L_1$.
It follows that, for $x\in(g(s),h(s))$ and $t_x<t\le s$,
  \begin{align*}
p_1(t,x)=&\;e^{-\int_{t_x}^t\! a(\tau,x){\rm d}\tau}
\!\int_{t_x}^t\!e^{\int_{t_x}^l\! a(\tau,x){\rm d}\tau}\!\left(\!d_1\!\int_{g(l)}^{h(l)}\!J_1(x-y)\left(
\varphi_1-\rho_1\right)(l,y)\dy+b(l,x)p_2(l,x)\!\right){\rm d}l.
  \end{align*}
Since $(g(t), h(t))\subset(g(s), h(s))$ when $t\le s$, we deduce that, for $x\in(g(t),h(t))$,
\begin{align*}
 |p_1(t,x)|&\;\le e^{L_1(t-t_x)}\int_{t_x}^te^{L_1(l-t_x)}{\rm d}l\left(d_1\|\varphi_1-\rho_1\|_{C(
\overline D_s)}+L_1\|p_2\|_{C(\overline D_s)}\right)\\
&\;\le (t-t_x)e^{2L_1(t-t_x)}\left(d_1\|\varphi_1-\rho_1\|_{C(
\overline D_s)}+L_1\|p_2\|_{C(\overline D_s)}\right)\\
&\;\le s e^{2L_1s}\left(d_1\|\varphi_1-\rho_1\|_{C(
\overline D_s)}+L_1\|p_2\|_{C(\overline D_s)}\right).
 \end{align*}
This gives
 \[\|p_1\|_{C(\overline D_s)}\le s e^{2L_1s}\left(d_1\|\varphi_1-\rho_1\|_{C(
 \overline D_s)}+L_1\|p_2\|_{C(\overline D_s)}\right).\]
Similarly,
 \[\|p_2\|_{C(\overline D_s)}\le s e^{2L_2s}\left(d_2\|\varphi_2-\rho_2\|_{C(
 \overline D_s)}+L_2\|p_1\|_{C(\overline D_s)}\right),\]
where $L_2=d_2+L(C)$. Set $L=\max\{L_1, L_2\}$. Then
 \begin{align*}
 \|\Gamma_s \varphi-\Gamma_s\rho\|_{C(\overline D_s)}
 \le\frac 12\left(\|\varphi_1-\rho_1\|_{C(\overline D_s)}+\|\varphi_2-\rho_2\|_{C(\overline D_s)}\right),~ \ \forall~s\in (0, \sigma]
 \end{align*}
provided that $\sigma\in (0,\delta]$ satisfies
 \[\sigma Le^{2L\sigma}\leq 1/2, \ \ \ \sigma d_ie^{2L\sigma}\le 1/4,
 \ \ i=1,\,2.\]
For such $s$ we may now apply the Contraction Mapping Theorem to conclude that $\Gamma_s$ has a unique fixed point $W$ in $\mathbf X^s_C$. It follows that $w=W$ solves \eqref{2.3} for $0<t\leq s$.

If we can show that any solution $w$ of \eqref{2.3} satisfies $0\leq w_1, w_2\leq C$ in $D_s$ then $w$ must coincides with the unique fixed point $W$ of $\Gamma_s$ in $\mathbf X^s_C$. We next prove such an estimate for $(w_1,w_2)$. Note that $w_1,w_2 \geq 0$ already follows from \eqref{2.5}. It is enough to show $w_1,w_2\leq C$. We actually prove the following stronger inequality
\begin{equation}\label{2.7}
 w_1(t,x)\leq A_1, \ w_2(t,x)\le A_2  ,\ \ \forall \ g(s)\le x\le h(s), \ t_x\le t\le s.
\end{equation}
We only prove $w_1(t,x)\leq A_1$ since $w_2(t,x)\le A_2$ can be shown by the same way. It suffices to show that the above inequality holds with $A_1$ replaced by $A_1+\ep$ for any given $\ep>0$. Suppose this is not true. Due to
$w_1(t_x, x)=\tilde u_{10}(x)\leq \|u_{10}\|_\infty<A^\ep:=A_1+\ep$, there exist $x_0\in (g(s), h(s))$ and $t_0\in (t_{x_0}, s]$ such that
 \[w_1(t_0, x_0)=A^\ep, \ \ \ 0\leq w_1(t,x)<A^\ep \ \mbox{ for} \
 g(t_0)\le x\le h(t_0), \ t_x\le t< t_0.\]
It follows that $w_{1t}(t_0, x_0)\ge 0$ and $f_1(t_0, x_0, w(t_0, x_0))\leq 0$ due to $w_1(t_0, x_0)=A^\ep>A_1$ and $w_2(t_0, x_0)\ge 0$.
Hence from the first equation of \eqref{2.3} we obtain
 \[0\leq w_{1t}(t_0,x_0)\le d_1\int_{g(t_0)}^{h(t_0)}J_1(x_0-y)w_1(t_0,y)\dy-d_1w_1(t_0, x_0).\]
Since $w_1(t_0,g(t_0))=w_1(t_0, h(t_0))=0$, we have $w_1(t_0,y)<A^\ep$ for $y\in (g(t_0), h(t_0))$ but close to the boundary of this interval. It follows that
 \[d_1A^\ep=d_1w_1(t_0,x_0)\le  d_1\int_{g(t_0)}^{h(t_0)}J_1(x_0-y) w_1(t_0,y)\dy<d_1A^\ep\int_{g(t_0)}^{h(t_0)}J_1(x_0-y)\dy\leq d_1A^\ep.\]
This contradiction proves \eqref{2.7}.
 Thus $(w_1, w_2)$ satisfies the wanted inequality and hence coincides with the unique fixed point
of $\Gamma_s$ in $\mathbf X^s_C$. We have now proved the fact that for every $s\in (0,\sigma]$, $\Gamma_s$ has a unique fixed point in $\mathbb X_{U_0}^s$.
\smallskip

{\bf Step 3:} {\it Completion of the proof.} From Step 2 we know that \eqref{2.3} has a unique solution $w$ defined for $t\in [0, \sigma]$ and $w$ satisfies \eqref{2.7} with $s=\sigma$. Note that
 \begin{align*}
 \max\left\{\max_{[g(\sigma),h(\sigma)]}w_1(\sigma,x), \ k\right\}\le&\;
 \max\{A_1,\ k\}=A_1, \\
 \max\left\{\max_{[g(\sigma),h(\sigma)]}w_2(\sigma,x), \ \Theta(A_1)\right\}\le&\;
 \max\{A_2,\ \Theta(A_1)\}=A_2.
 \end{align*}
Hence we may apply Step 2 to \eqref{2.3} but with the initial time $t=0$ replaced by $t=\sigma$ to conclude that
 the unique solution can be extended to a slightly larger domain $D_{g,h}^{\ol\sigma}$. Moreover, by \eqref{2.7} and the definition of $\sigma$ in Step 2, we see that $\ol\sigma$ depends only on $d_i$ and $A_i$, and it can take any value in $(0, 2\sigma]$. Furthermore, from the above proof of \eqref{2.7} we easily see that the extended
solution $w$ satisfies \eqref{2.7} in $D_{g,h}^{\ol\sigma}$. Thus the extension can be repeated. By repeating this process finitely many times, the solution $w$ of \eqref{2.3} will be uniquely extended to $D^T_{g, h}$. As explained above, now
\eqref{2.7} holds with $s=T$, and hence to prove that $w$ satisfies \eqref{2.1}, it only remains
to show $w_1>0$, $w_2>0$ in $D^T_{g, h}$. Recall $w_1, w_2\ge 0$. Using the conditions {\rm\bf (f), (f1), (f2)} and the conclusion \eqref{2.7} we may write $f_1(t, x, w)=b_1(t,x)w_1$ and $f_2(t, x, w)=b_2(t,x)w_2$ with $b_i\in L^\infty(D^T_{g, h})$. Then Lemma \ref{l2.2} gives the desired result.
\end{proof}

\begin{proof}[Proof of Theorem \ref{th2.1}]\, By Lemma \ref{l2.3},
for any $T>0$ and $(g, h)\in\mathbb{G}_{h_0}^T\times\mathbb H_{h_0}^T$, we can find a
unique $w_{g,h}=(w_{1,g,h},\,w_{2,g,h})\in\mathbb{X}^T_{u_0, g, h}$ that solves (\ref{2.3}) { and satisfies \eqref{2.1}.}

Using such a $w_{g,h}$, we define the mapping ${\mathcal F}$ by ${\mathcal F} (g,h)=
\bbb(\tilde g,\,\tilde h\bbb)$, where
 \begin{equation*}\begin{aligned}
&\tilde h(t)=h_0+\sum_{i=1}^2\mu_i\int_0^t\int_{g(\tau)}^{h(\tau)}\int_{h(\tau)
}^{\infty}J_i(x-y)w_{i,g,h}(\tau,x)\dy\dx{\rm d}\tau,\\
&\tilde g(t)=-h_0-\sum_{i=1}^2\mu_i\int_0^t\int_{g(\tau)}^{h(\tau)}\int_{-\infty
}^{g(\tau)}J_i(x-y)w_{i,g,h}(\tau,x)\dy\dx{\rm d}\tau
 \end{aligned}
\end{equation*}
for $0<t\leq T$. To simplify notations, we will write
 \[\mathbb G_T=\mathbb G_{h_0}^T,\ \ \ \mathbb H_T=\mathbb H_{h_0}^T,\ \ \ D_T=D^T_{g,h}, \ \ \
\mathbb X_T=\mathbb{X}^T_{U_{0}, g, h}.\]

To prove this theorem, we first show that if $T$ is small enough, then ${\mathcal F}$ maps a suitable closed subset $\Sigma_T$ of $\mathbb{G}_T\times\mathbb{H}_T$ into itself,
and  is a contraction mapping. This clearly implies that ${\mathcal F}$ has a unique fixed point in $\Sigma_T$, which gives a solution $(w_{g,h}, g, h)$ of \eqref{1.2} defined for $t\in (0, T]$. Then we prove that any solution $(u_1, u_2, g, h)$ of \eqref{1.2} with $(g,h)\in \mathbb{G}_T\times\mathbb{H}_T$ must satisfy $(g,h)\in \Sigma_T$, and hence $(g,h)$ must coincide with the unique fixed point of ${\mathcal F}$ in $\Sigma_T$, which then implies that the solution $(u_1, u_2, g, h)$ of \eqref{1.2} is unique. Finally  we extend this unique local solution to a global one. This plan will be carried out in several steps.

\sk{\bf Step 1:} {\it Properties of $(\tilde g, \tilde h)$ and a closed subset of $\mathbb{G}_T\times\mathbb{H}_T$.} Let $(g,h)\in \mathbb{G}_T\times\mathbb{H}_T$.  Then $\tilde g, \tilde h\in C^1([0, T])$ and for $0<t\le T$,
 \be\left\{\begin{aligned}
&\tilde h'(t)=\sum_{i=1}^2\mu_i\int_{g(t)}^{h(t)}\int_{h(t)
}^{\infty}J_i(x-y)w_{i,g,h}(\tau,x)\dy\dx,\\
&\tilde g'(t)=-\sum_{i=1}^2\mu_i\int_{g(t)}^{h(t)}\int_{-\infty
}^{g(t)}J_i(x-y)w_{i,g,h}(\tau,x)\dy\dx.
 \end{aligned}\right.
 \label{2.8}\ee
These facts imply $(\tilde g, \tilde h)\in  \mathbb{G}_T\times\mathbb{H}_T$. To show that ${\mathcal F}$ is a contraction, we need some further properties of $\tilde g$ and $\tilde h$ to be used in choosing a suitable closed subset of  $\mathbb{G}_T\times\mathbb{H}_T$, which
is invariant under ${\mathcal F}$, and on which ${\mathcal F}$ is a contraction mapping.

Denote $w_i=w_{i,g,h}$ to simplify the notations. { Since $(w_1, w_2)$ solves (\ref{2.3}) and satisfies \eqref{2.1}, we obtain by using {\bf (f), (f1), (f2)} that}
 \bess\left\{\begin{aligned}
 &w_{it}(t,x)\ge-d_iw_i(t,x)-L(A_1,A_2)w_i(t,x),&& 0<t\le T,~g(t)<x<h(t),\\
 &w_i=0, \ && 0\le t\le T,~ x=g(t),\,h(t),\\
 &w_i(0,x)=u_{i0}(x), &&|x|\le h_0.
  \end{aligned}\right. \eess
It follows that
 \bes
  w_i(t,x)\ge e^{-[d_i+L(A_1,A_2)]t}u_{i0}(x)\ge e^{-[d_i+L(A_1,A_2)]T}u_{i0}(x),\ \
  t\in (0, T],\ |x|\le h_0.\label{2.9}
 \ees
By the condition \textbf{(J)}, there exist constants $\ep_0\in (0, h_0/4)$ and $\delta_0>0$ such that
  \bes\label{2.10}
 J_i(x-y)\ge\delta_0\ \ \ \mbox{for}\ \ |x-y|\le\ep_0, \ \ \ i=1,2.
 \ees
Using \eqref{2.8} we easily see
 \[[\tilde h(t)-\tilde g(t)]'\leq (\mu_1 A_1+\mu_2 A_2)[h(t)-g(t)],\ \ \forall\ t\in [0, T].\]
We can choose $0<T\ll 1$, depending on $\mu_i, A_i, h_0, \ep_0$, such that $h(T)-g(T)\leq 2h_0+\ep_0/4$ and
   \begin{align*}
   \tilde h(t)-\tilde g(t)\leq&\; 2h_0 +T(\mu_1 A_1+\mu_2 A_2)(2h_0+\ep_0/4)\leq 2h_0
  +\ep_0/4,\ \ \forall\ t\in [0, T],\\
 h(t)\in&\;[h_0,\,h_0+\ep_0/4],\ \ g(t)\in [-h_0-\ep_0/4,\,-h_0],\ \ \forall\ t\in [0, T].
  \end{align*}
Combining this with \eqref{2.9} and \eqref{2.10} we obtain that, for $t\in (0, T]$,
 \begin{align*}
 \sum_{i=1}^2\mu_i\int_{g(t)}^{h(t)}\!\int_{h(t)}^{\infty}\!J_i(x-y)w_i(t,x)\dy\dx
\ge&\;\sum_{i=1}^2\mu_i\int_{h(t)-\frac{\ep_0}{2}}^{h(t)}\!\int_{
h(t)}^{h(t)+\frac{\ep_0}{2}}\!J_i(x-y)w_i(t,x)\dy\dx\\
\ge&\;\sum_{i=1}^2\mu_i e^{-LT}\int_{h_0-\frac{\ep_0}4}^{h_0}\!\int_{
h_0+\frac{\ep_0}4}^{h_0+\frac{\ep_0}{2}}\!J_i(x-y)e^{-d_iT}u_{i0}(x)\dy\dx\\
\ge&\;\frac {\ep_0}4\delta_0e^{-(d_1+d_2+L)T}\sum_{i=1}^2\mu_i\int_{h_0-\frac{
\ep_0}4}^{h_0}u_{i0}(x)\dx\\
=:&\;\alpha_1\mu_1+\alpha_2\mu_2,
 \end{align*}
where $L=L(A_1,A_2)$, $\alpha_1$, $\alpha_2$ are positive constants depending only on $J_i,\,f_i$ and $U_0$. Thus, for sufficiently small $T_0=T(\mu_1, \mu_2 , A_1, A_2, h_0, \ep_0)>0$,
 \bes \label{2.11}
 \tilde h'(t)\geq \alpha_1\mu_1+\alpha_2\mu_2 :=h_*>0, \ \ t\in [0, T_0].
 \ees
Similarly,
 \bes\label{2.12}
 \tilde g'(t)\leq -(\tilde \alpha_1\mu_1+\tilde \alpha_2\mu_2):=g_*<0, \ \ t\in [0, T_0]
 \ees
for some positive constants $\tilde \alpha_1$ and $\tilde \alpha_2$ depending only on $J_i,\,f_i$ and $U_0$.

For $0<T\le T_0$, we define
 \begin{align*}
\Sigma_T:=&&\Big\{(g,h)\in\mathbb G_T\times\mathbb H_T: \frac{g(t_2)-g(t_1)}{t_2-t_1}\leq g_*, \ \frac{h(t_2)-h(t_1)}{t_2-t_1}\geq h_* \mbox{ for } 0\leq t_1<t_2\leq T, \\
&&  \mbox{ and } \ h(t)-g(t)\leq 2h_0+\frac{\ep_0}4 \mbox{ for } 0\leq t\leq T\Big\}.
 \end{align*}
The above analysis shows that ${\mathcal F}(\Sigma_T)\subset\Sigma_T$.

\sk {\bf Step 2:} {\it ${\mathcal F}$ is a contraction mapping on $\Sigma_T$ for $0<T\ll 1$.}
For $(g_j,h_j)\in\Sigma_T$, $j=1,2$, we set
  \begin{align*}
  \Omega_T=D_{g_1, h_1}^T&\cup D_{g_2, h_2}^T, \ \ w_1^j=w_{1,g_j,h_j}, \ \ w_2^j=w_{2,g_j,h_j}, \ \ {\mathcal F}\left(g_j,h_j\right)=\bbb(\tilde g_j,\tilde h_j\bbb),\\
\hat w_j=w_{j}^1&-w_{j}^2, \ \ \hat g=g_1-g_2, \ \ \hat h=h_1-h_2, \ \
     \tilde g=\tilde g_1-\tilde g_2, \ \ \tilde h=\tilde h_1-\tilde h_2.
     \end{align*}
Make the zero extension of $w_i^j$ in $\big([0,T]\times\mathbb R\big)\setminus D^T_{g_j, h_j}$. It then follows that
  \begin{align*}
 |\tilde h'(t)|
\le&~\sum_{i=1}^2\mu_i\left|\int_{g_1(\tau)}^{h_1(\tau)}\!\!\int_{h_1(\tau)}^{\infty}
J_i(x-y)w_i^1(\tau,x)\dy\dx-\int_{g_2(\tau)}^{h_2(\tau)}\!\!
\int_{h_2(\tau)}^{\infty}J_i(x-y)w_i^2(\tau,x)\dy\dx\right|\\
\le&~\sum_{i=1}^2\mu_i\int_{g_1(\tau)}^{h_1(\tau)}\!\int_{h_1(\tau)}^{\infty}\!\!
 J_i(x-y)|\hat w_i(\tau,x)|\dy\dx\\
 &+\sum_{i=1}^2\mu_i\bigg|\left(\int_{g_1(\tau)}^{g_2(\tau)}\!\!\int_{h_1(\tau)}^{\infty}
 +\int_{h_2(\tau)}^{h_1(\tau)}\!\!\int_{h_1(\tau)}^{\infty}
 +\int_{g_2(\tau)}^{h_2(\tau)}\!\!\int_{h_1(\tau)}^{h_2(\tau)}\right)
 J_i(x-y)w_i^2(\tau,x)\dy\dx\bigg|\\
\le&~\sum_{i=1}^2\mu_i\left(3h_0\|\hat w_i\|_{C(\overline\Omega_T)}
  +\|\hat g\|_{C([0,T])}A_i+\left(A_i+3h_0 A_i\|J_i\|_\infty\right)\|\hat h\|_{C([0,T])}\right),\end{align*}
and so
 \begin{align*}
|\tilde h(t)|\le C_0T\left(\|\hat w_1,\,\hat w_2\|_{C(\overline\Omega_T)}+\|\hat g,\,\hat h\|_{C([0,T])}\right),\ \ 0<t\le T,
 \end{align*}
where $C_0$ depends only on $h_0$, $\mu_i ,\,A_i$ and $J_i$. Similarly,
 \[|\tilde g(t)|\le C_0T\left(\|\hat w_1,\,\hat w_2\|_{C(\overline\Omega_T)}+\|\hat g,\,\hat h\|_{C([0,T])}\right),\ \ 0<t\le T.\]
Therefore,
  \be \begin{aligned}
\|\tilde g,\,\tilde h\|_{C([0,T])}
\le CT\left(\|\hat w_1,\,\hat w_2\|_{C(\overline\Omega_T)}+\|\hat g,\,\hat h\|_{C([0,T])}\right).
 \end{aligned}\label{2.13}\ee

Next, we  estimate $\|\hat w_1,\, \hat w_2\|_{C(\overline\Omega_T)}$. Fix $(s,x)\in\Omega_T$. We now estimate $|\hat w_1(s,x)|$ and $|\hat w_2(s,x)|$ in all the possible cases.

\sk\underline{Case 1}: $x\in(g_1(s),h_1(s))\setminus(g_2(s),h_2(s))$.
In such case, either $g_1(s)<x\le g_2(s)$ or $h_2(s)\le x<h_1(s)$, and $w_1^2(s,x)=w_2^2(s,x)=0$.

When $h_2(s)\le x<h_1(s)$, there exists $0<s_1<s$ such that $x=h_1(s_1)$, and so $h_1(s)>h_1(s_1)=x\ge h_2(s)$. Clearly, $g_1(t)<h_1(s_1)=x\le h_1(t)$ for all $t\in[s_1, s]$. By integrating the  equation
satisfied by $w_1^1$ from $s_1$ to $s$ we obtain
  \begin{align*}
|\hat w_1(s, x)|&=w_1^1(s,x)=\int_{s_1}^{s}\left(d_1\int_{g_1(t)}^{h_1(t)}J_1(x-y)w_1^1(t, y)\dy-d_1w_1^1+f_1(t, x, w_1^1, w_2^1)\right)\dt\\
&\leq (s-s_1)\big[d_1+L(A_1,A_2)\big]A_1\\
&\leq h_*^{-1}\big[h_1(s)-h_1(s_1)\big]\big[d_1+L(A_1,A_2)\big]A_1\\
&\leq h_*^{-1}\big[h_1(s)-h_2(s)\big]\big[d_1+L(A_1,A_2)\big]A_1 \\
&\leq C_4\|h_1-h_2\|_{C([0,s])}=C_4\|\hat h\|_{C([0,T])}.
 \end{align*}
When $g_1(s)<x\le g_2(s)$, we can analogously obtain
 \[|\hat w_1(s, x)|=w_1^1(s,x)\leq C_4\|\hat g\|_{C([0,T])}.\]
 Hence
 \[|\hat w_1(s, x)|\leq C_4\|\hat g,\,\hat h\|_{C([0,T])}.\]

Similarly we can show $|\hat w_2(s, x)|=w_2^1(s,x)\leq C_4\|\hat g,\,\hat h\|_{C([0,T])}$. Thus, in this case,
 \bes
 |\hat w_1(s, x)|,\,|\hat w_2(s, x)|\leq C_4\|\hat g,\,\hat h\|_{C([0,T])}.
 \lbl{2.14}\ees

\underline{Case 2}: $x\in(g_2(s),h_2(s))\setminus(g_1(s),h_1(s))$. Similar to  case 1 we have
$|\hat w_1(s, x)|=w_1^2(s,x)\leq C_4\|\hat g, \ \hat h\|_{C([0,T])}$ and $|\hat w_2(s, x)|=w_2^2(s,x)\leq C_4\|\hat g, \ \hat h\|_{C([0,T])}$. Thus \eqref{2.14} holds in this case as well.

\sk\underline{Case 3}: $x\not\in(g_2(s),h_2(s))\cup(g_1(s),h_1(s))$. In this case clearly $\hat w_1(s,x)=\hat w_2(s,x)=0$ and hence \eqref{2.14} holds trivially.

\sk\underline{Case 4}: $x\in(g_1(s), h_1(s))\cap(g_2(s),h_2(s))$. If $x\in (g_1(t),h_1(t))\cap(g_2(t),h_2(t))$ for all $0<t<s$, that is, $x\in [-h_0, h_0]$, it then follows that
 \bes
\hat w_{1t}(t,x)&=&d_1\displaystyle\int_{g_1(t)}^{h_1(t)}J_1(x-y)\hat w_1(t,y)\dy+d_1\left\{\int_{g_1(t)}^{g_2(t)}
+\int_{h_2(t)}^{h_1(t)}\right\}J_1(x-y)w_1^2(t,y)\dy\nonumber\\[1mm]
&&-d_1\hat w_1(t,x)+f_1(t,x,w_1^1,w_2^1)-f_1(t,x,w_1^2,w_2^2).
 \lbl{2.15}
 \ees
Note that $\hat w_1(0,x)=0$, $0<w_1^i\le A_1$, $0<w_2^i\le A_2$ and
\[|f_1(t,x,w_1^1,w_2^2)-f_1(t,x,w_1^2,w_2^2)|\le L(|w_1^1-w_1^2|+|w_2^1-w_2^2|)=L(|\hat w_1|+|\hat w_2|),\]
where $L=L(A_1,A_2)$. Integrating \eqref{2.15} from $0$ to $s$ we have
 \begin{align*}
 |\hat w_1(s,x)|\le\left(2d_1\|\hat w_1\|_{C(\Omega_T)}+d_1A_1\|J_1\|_\infty\|\hat g,\,\hat h\|_{C([0,T])}
 +L\|\hat w_1,\,\hat w_2\|_{C(\Omega_T)}\right)T.
 \end{align*}
Similarly,
 \begin{align*}
 |\hat w_2(s,x)|\le\left(2d_2\|\hat w_2\|_{C(\Omega_T)}+d_2A_2\|J_2\|_\infty\|\hat g,\,\hat h\|_{C([0,T])}
 +L\|w_1,\,w_2\|_{C(\Omega_T)}\right)T.
 \end{align*}

If there exists $0<t<s$ such that $x\not\in(g_1(t),h_1(t))\cap(g_2(t),h_2(t))$, then we can choose the largest $t_0\in(0,t)$ such that
 \bes
 x\in(g_1(t),h_1(t))\cap(g_2(t),h_2(t)), \ \ \forall \ t_0<t\le s,
 \lbl{2.16}\ees
and
\bess
 x\in(g_1(t_0),h_1(t_0))\setminus(g_2(t_0),h_2(t_0)), \ \ \mbox{or} \ \
 x\in(g_2(t_0),h_2(t_0))\setminus(g_1(t_0),h_1(t_0)).
 \eess
Using the conclusions of Case 1 and Case 2 we have $|\hat w_1(t_0, x)|\leq C_4\|\hat g,\,\hat h\|_{C([0,T])}$.
In view of \eqref{2.16}, it is clear that \eqref{2.15} holds for all $t_0<t\le s$. Integrating \eqref{2.15} from $t_0$ to $s$ we obtain
 \begin{align*}
 |\hat w_1(s,x)|\le&\;|\hat w_1(t_0, x)|+\left(2d_1\|\hat w_1\|_{C(\Omega_T)}+d_1A_1\|J_1\|_\infty\|\hat g,\,\hat h\|_{C([0,T])}
 +L\|\hat w_1,\,\hat w_2\|_{C(\Omega_T)}\right)(s-t_0)\\
 \le&\;C_4\|\hat g,\,\hat h\|_{C([0,T])}+C_5\left(\|\hat g,\,\hat h\|_{C([0,T])}
 +\|\hat w_1,\,\hat w_2\|_{C(\Omega_T)}\right)T\\
 =:&\;C_6\|\hat g,\,\hat h\|_{C([0,T])}+C_5T\|\hat w_1,\,\hat w_2\|_{C(\Omega_T)}.
 \end{align*}
In the same way one has
 \begin{align*}
 |\hat w_2(s,x)|\le C_6\|\hat g,\,\hat h\|_{C([0,T])}+C_5T\|\hat w_1,\,\hat w_2\|_{C(\Omega_T)}.
 \end{align*}

Summarizing the above discussions, we obtain
  \bess
 \|\hat w_1,\,\hat w_2\|_{C(\Omega_T)}\le C'\|\hat g, \ \hat h\|_{C([0,T])}+C'T\|\hat w_1,\,\hat w_2\|_{C(\Omega_T)}
 \le 2C'\|\hat g, \ \hat h\|_{C([0,T])}
 \eess
if $C'T<1/2$. This combined with \eqref{2.13} yields
  \begin{align*}
\|\tilde g,\,\tilde h\|_{C([0,T])}
\le C(2C'+1)T\|\hat g,\,\hat h\|_{C([0,T])}\le\frac 12\|\hat g,\,\hat h\|_{C([0,T])}
 \end{align*}
if $C(2C'+1)T\le 1/2$. This shows that ${\mathcal F}$ is a contraction mapping on $\Sigma_{T}$.

\medskip
{\bf Step 3:} {\it Local existence and uniqueness.} By Step 2 and the Contraction Mapping Theorem we know that \eqref{1.2} has a solution $(u_1, u_2, g, h)$ defined for $t\in (0, T]$. If we can show that $(g,h)\in\Sigma_{T}$ holds for any solution $(u_1, u_2, g, h)$
of \eqref{1.2} defined for $t\in (0, T]$, then $(g,h)$  must coincide with the unique fixed point of ${\mathcal F}$ in $\Sigma_{T}$ and the uniqueness of the local solution $(u_1, u_2, g, h)$ to \eqref{1.2} would follow.

Let $(u_1, u_2, g, h)$ be an arbitrary solution of \eqref{1.2} defined for $t\in (0, T]$. Then
  \[\begin{aligned}
 &h'(t)=\sum_{i=1}^2\mu_i\int_{g(t)}^{h(t)}\int_{h(t)}^{\infty}J_i(x-y)u_i(t,x)\dy\dx,\\
 &g'(t)=-\sum_{i=1}^2\mu_i\int_{g(t)}^{h(t)}\int_{-\infty}^{g(t)}J_i(x-y)u_i(t,x)\dy\dx.
 \end{aligned}\]
In view of Lemma \ref{l2.3}, $0<u_1\leq A_1,\,0<u_2\leq A_2$ in $D^T_{g, h}$. Thus
 \begin{align*}
 h'(t)-g'(t)=\sum_{i=1}^2\mu_i\int_{g(t)}^{h(t)}\!\left(\int_{h(t)}^{\infty}
 +\int_{-\infty}^{g(t)}\right)J_i(x-y)u_i(t,x)\dy\dx
 \leq\sum_{i=1}^2\mu_iA_i[h(t)-g(t)], \end{align*}
which implies
 \bes\label{2.17}
 h(t)-g(t)\leq 2h_0 e^{(\mu_1A_1+\mu_2 A_2)t}, \ \ \ t\in (0, T].
 \ees
Shrink $T$ so that $2h_0e^{(\mu_1A_1+\mu_2 A_2)T}\leq 2h_0+{\ep_0}/4$, then
$h(t)-g(t)\leq 2h_0+\ep_0/4$ on $[0, T]$. Moreover, the proof of \eqref{2.11} and \eqref{2.12} gives
$h'(t)\geq h_*$ and $g'(t)\leq g_*$ in $(0, T]$. Thus $(g,h)\in\Sigma_{T}$ as we required.

\smallskip
{\bf Step 4:} {\it Global existence and uniqueness.} By Step 3, we see that the problem \eqref{1.2} has a unique solution $(u_1, u_2, g, h)$ for some time interval $(0, T]$. Moreover, for any fixed $s\in(0, T)$, there hold $u_1(s,x)>0$, $u_2(s,x)>0$ in $(g(s), h(s))$, and $u_i(s,\cdot)$ ($i=1,2$) are  continuous on $[g(s), h(s)]$.
This implies that we can treat $(u_1(s,\cdot), u_2(s,\cdot))$ as an initial function and use Step 3
to extend the solution from $t=s$ to some $T'\geq T$. Suppose that $(0, T_0)$ is the maximal existence interval of $(u_1, u_2, g, h)$ obtained by such an extension process. We show that $T_0=\infty$. Otherwise $T_0\in(0, \infty)$ and we are going to derive a contradiction.

Firstly, \eqref{2.17} holds for $t\in (0, T_0)$. Since $h(t)$ and $g(t)$ are
monotone in $[0, T_0)$, we may define
  \[h(t_0):=\lim_{t\to T_0} h(t),\ \ \ g(T_0):=\lim_{t\to T_0} g(t) \ \ \mbox{ with }
   \ h({T_0})-g(T_0)\leq 2h_0e^{(\mu_1A_1+\mu_2 A_2)T_0}.\]
The free boundary conditions in \eqref{1.2}, together with $0\le u_1\leq A_1$ and $0\le u_2\leq A_2$ indicate that $h',\,g'\in L^\infty([0, T_0))$ and hence $g, h\in C([0,T_0])$ with $g(T_0)$, $h(T_0)$
defined as above. It follows that the right-hand side of the first
equation in \eqref{1.2} belongs to $L^\infty(D^{T_0}_{g, h})$, this implies
$u_{i,t}\in L^\infty(D^{T_0}_{g, h})$. Thus for each $x\in (g(T_0), h(t_0))$ and $i=1,2$, the limit
$u_i(T_0,x):=\dd\lim_{t\nearrow T_0}u_i(t,x)$ exists, and $u_i(\cdot, x)$ is continuous at $t=T_0$. We may now view $(u_1(t,x), u_2(t,x))$ as the unique solution of the ODE problem in Step 1 of the proof of Lemma \ref{l2.3} (with $\varphi=(u_1, u_2)$), which is defined over $[t_x, T_0]$. Since $t_x$, $J_i(x,y)$, and
$f_i(t, x, u_1, u_2)$ are all continuous in $x$, by the continuous dependence of the ODE solution to
the initial function and the parameters in the equation, we see that $u_i(t,x)$ ($i=1,2$) are
continuous in $D^{T_0}_{g, h}$. By assumption, $u_1,\,u_2\in C(\overline D^s_{g, h})$ for any
$s\in (0, T_0)$. To show this also holds with $s=T_0$, it remains to show that
$u_i(t,x)\to 0$ as $(t,x)\to (T_0, g(T_0))$ and as $(t,x)\to (T_0, h(t_0))$ from $D^{T_0}_{g, h}$. We only prove $u_1(t,x)\to 0$ as $(t,x)\to (T_0, g(T_0))$ from $D^{T_0}_{g, h}$
because of the other cases can be shown similarly. We note that $x\searrow g(T_0)$ implies
$t_x\nearrow T_0$, and so
 \begin{align*}
|u_1(t,x)|&=\left|\int_{t_x}^t \left(d_1\int_{g(t)}^{h(t)}J_1(x,y)u_1(\tau, y)\dy-d_1 u_1(\tau, x)
+f_1(\tau, x, u_1(\tau,x), u_2(\tau, x))\right){\rm d}\tau\right|\\[1mm]
&\leq (t-t_x)\big[2d_1+L(A_1, A_2)\big]A_1\\[1mm]
&\to 0 \ \ \mbox{ as }\ D^{T_0}_{g, h}\ni(t,x)\to(T_0, g(T_0)).
\end{align*}

Thus we have shown that $u_i\in C(\overline D^{T_0}_{g, h})$ and $(u_1, u_2, g, h)$ satisfies
\eqref{1.2} for $t\in (0, T_0]$. Writing $f_i(t,\,x,\,u_1(t,x), u_2(t,x))=b_i(t,x)u_i(t,x)$ with $b_i\in L^\infty(D^{T_0}_{g, h})$, and using
Lemma \ref{l2.2} we have $u_i(T_0, x)>0$ for $x\in (g(T_0), h(t_0))$. Thus we can
regard $(u_1(T_0, \cdot), u_2(T_0, x))$ as an initial function and apply Step 3 to conclude that the solution of \eqref{1.2} can be extended to some $(0, \tilde T)$ with $\tilde T>T_0$. This
contradicts the definition of $T_0$. Therefore we must have $T_0=\infty$.

From the above proof we see that \eqref{2.1} and \eqref{2.2} hold, and the theorem is proved.
\end{proof}

\section{Spreading and vanishing }
\setcounter{equation}{0} {\setlength\arraycolsep{2pt}

In view of \eqref{2.2} we can define
 \[
 \lim_{t\to\infty}g(t)=g_\infty\in[-\infty,-h_0), \ \ \ \lim_{t\to\infty}h(t)=h_\infty\in(h_0,\infty].
 \]
 Clearly we have either
 \[
 \mbox{(i)\, $h_\infty-g_\infty<\infty$, \ \ or \ (ii)\, $h_\infty-g_\infty=\infty$. }
 \]
 We will call (i) the vanishing case, and call (ii) the spreading case.

 The main purpose of this section is to determine when (i) or (ii) can occur, and to determine the long-time profile of $(u_1, u_2)$ if (i) or (ii) happens.
 It turns out that these are highly nontrivial tasks as many techniques worked in the corresponding local diffusion cases are not applicable anymore, and those worked in the one species nonlocal diffusion problem with free boundary in \cite{CDLL} are also lacking for treating the current two species situation. In subsection 3.1 below, we introduce some new techniques which are enough to treat the Lotka-Volterra cases \eqref{1.4} and \eqref{1.5}. In subsection 3.2, we will further restrict the growth function classes in order to determine the long-time profile of $(u_1, u_2)$.

\subsection{Criteria for vanishing and spreading}
The following two simple lemmas provide some key ingredients for analysing the vanishing phenomenon.

\begin{lem}\lbl{l3.1}\,Let the condition {\bf (J)} hold for the kernel functions $J_1, J_2$, and $\beta_1,\beta_2>0$ be  constants.
Suppose that $g,\,h\in C^1([0,\infty))$, $g(0)<h(0)$, $g'(t)\le 0$, $h'(t)\ge 0$ and $w_i,\,w_{it}\in C(D_{g,h}^\infty)\cap L^\infty(D_{g,h}^\infty)$ for $i=1,2$, where $D_{g,h}^\infty=\left\{t>0,~g(t)<x<h(t)\right\}$.
If $(w_1, w_2,\,g,\,h)$ satisfies
  \bes\left\{\begin{aligned}
 &h'(t)=\sum_{i=1}^2\beta_i\int_{g(t)}^{h(t)}\int_{h(t)}^{\infty}J_i(x-y)w_i(t,x)\dy\dx,
 &&t\ge 0,\\
&g'(t)= -\sum_{i=1}^2\beta_i\int_{g(t)}^{h(t)}\int_{-\infty}^{g(t)}J_i(x-y)w_i(t,x)\dy\dx,
&&t\ge 0, \end{aligned}\right.
 \label{3.1}\ees
 and
 \bes
  \lim_{t\to\infty}h(t)-\lim_{t\to\infty}g(t)<\infty,
 \label{3.2}
 \ees
then
  \begin{align*}
 \lim_{t\to\infty}g'(t)=\lim_{t\to\infty}h'(t)=0.
 \lbl{zz7.25}\end{align*}
\end{lem}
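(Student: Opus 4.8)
The plan is to show that both $h'(t)$ and $-g'(t)$ are integrable over $(0,\infty)$ and uniformly continuous, so that a standard Barbalat-type argument forces them to tend to $0$. Since $g$ is nonincreasing and $h$ nondecreasing, the hypothesis \eqref{3.2} says that $h(\infty)-g(\infty)$ is finite; hence $\int_0^\infty [h'(t)-g'(t)]\dt = [h(\infty)-g(\infty)]-[h(0)-g(0)]<\infty$, and as both $h'\ge 0$ and $-g'\ge 0$, each of $h'$ and $-g'$ is separately in $L^1(0,\infty)$. In particular $g_\infty:=\lim g(t)$ and $h_\infty:=\lim h(t)$ are finite, and there is a finite interval $[g_\infty,h_\infty]$ containing $[g(t),h(t)]$ for all $t$.

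Next I would establish that $h'$ and $-g'$ are uniformly continuous on $[0,\infty)$; equivalently (since they are already $C^0$), that they have bounded variation or a uniform modulus of continuity. Write $h'(t)=\sum_{i=1}^2\beta_i P_i(t)$ with $P_i(t)=\int_{g(t)}^{h(t)}\int_{h(t)}^{\infty}J_i(x-y)w_i(t,x)\dy\dx$. Using $w_i\in L^\infty$, $0\le P_i(t)\le \|w_i\|_\infty [h(t)-g(t)]\le \|w_i\|_\infty (h_\infty-g_\infty)$, so $h'$ is bounded; the same bound holds for $g'$. To get uniform continuity of $P_i$, I would differentiate (or estimate difference quotients of) $P_i$ in $t$: the $t$-dependence enters through the limits of integration $g(t),h(t)$ and through $w_i(t,x)$. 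The boundary-limit contributions are controlled by $|g'|,|h'|\le C$ together with $\sup J_i<\infty$ from \textbf{(J)}, while the contribution from $\partial_t w_i$ is controlled by the hypothesis $w_{it}\in L^\infty(D_{g,h}^\infty)$ — this is exactly why that assumption is in the statement. So $P_i'$ (in the a.e. sense) is bounded on $(0,\infty)$, hence $P_i$, and therefore $h'$ and $g'$, are Lipschitz, in particular uniformly continuous.

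Finally, a function that is nonnegative, uniformly continuous, and in $L^1(0,\infty)$ must tend to $0$ at infinity (Barbalat's lemma): if not, there exist $\eta>0$ and $t_n\to\infty$ with $h'(t_n)\ge\eta$, and by uniform continuity $h'\ge\eta/2$ on intervals of fixed length $\delta>0$ around each $t_n$, contradicting $\int_0^\infty h'\dt<\infty$. Applying this to $h'$ and to $-g'$ gives $\lim_{t\to\infty}h'(t)=\lim_{t\to\infty}g'(t)=0$, as claimed.

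\textbf{Main obstacle.} The only genuinely nonroutine point is the uniform continuity of $h'$ and $g'$: one must carefully bound the $t$-variation of the double integrals $P_i(t)$, keeping track of the three sources of variation (the two moving endpoints and the time-derivative of $w_i$) and invoking precisely the hypotheses $\sup_\R J_i<\infty$, $w_i\in L^\infty$, $w_{it}\in L^\infty$, and the already-derived boundedness of $g',h'$. Once that estimate is in hand, the $L^1$ bound and Barbalat's lemma finish the argument immediately.
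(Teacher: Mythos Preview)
Your proposal is correct and follows essentially the same route as the paper: the paper also shows that $h'$ (and $g'$) is Lipschitz continuous by directly estimating $|h'(t)-h'(s)|$ via a decomposition of the double integral into contributions from the moving endpoints $g,h$ and from the time variation of $w_i$ (using $w_{it}\in L^\infty$), and then invokes exactly the Barbalat-type fact that a nonnegative $L^1$ Lipschitz function must tend to $0$. One very minor remark: in the explicit splitting the paper uses, the endpoint contributions are bounded using only $\int_\R J_i=1$ rather than $\sup_\R J_i<\infty$, so that hypothesis from \textbf{(J)} is in fact not needed at this particular step.
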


\begin{proof}\, Set $\dd\lim_{t\to\infty}h(t)=h_\infty$, $\dd\lim_{t\to\infty}g(t)=g_\infty$. The condition \eqref{3.2} implies $-\infty<g_\infty<h_\infty<\infty$. Take $M>0$ such that
$\beta_i|w_i|\le M$, $\beta_i|w_{it}|\le M$ in $D_{g,h}^\infty$ for $i=1,2$. It then follows from \eqref{3.1} that $g'(t)$ and $h'(t)$ are bounded. For any given $t,s>0$, we have
 \begin{align*}
 {h'(t)-h'(s)}=&\;\int_{g(t)}^{h(t)}\int_{h(t)}^{\infty}\sum_{i=1}^2\beta_iJ_i(x-y)w_i(t,x)\dy\dx
 -\int_{g(s)}^{h(s)}\int_{h(s)}^{\infty}\sum_{i=1}^2\beta_iJ_i(x-y)w_i(s,x)\dy\dx\\
 =&\;\left\{\int_{g(t)}^{g(s)}+\int_{h(s)}^{h(t)}\right\}\int_{h(t)}^{\infty}\sum_{i=1}^2\beta_iJ_i(x-y)w_i(t,x)\dy\dx\\
 &-\int_{g(s)}^{h(s)}\int_{h(s)}^{h(t)}\sum_{i=1}^2\beta_iJ_i(x-y)w_i(s,x)\dy\dx\\
 &+\;\int_{g(s)}^{h(s)}\int_{h(t)}^{\infty}\sum_{i=1}^2\beta_iJ_i(x-y)\big[w_i(t,x)-w_i(s,x)\big]\dy\dx\\
 =&\;\left\{\int_{g(t)}^{g(s)}+\int_{h(s)}^{h(t)}\right\}\int_{h(t)}^{\infty}\sum_{i=1}^2\beta_iJ_i(x-y)w_i(t,x)\dy\dx\\
 &-\int_{h(s)}^{h(t)}\int_{g(s)}^{h(s)}\sum_{i=1}^2\beta_iJ_i(x-y)w_i(s,x)\dx\dy\\
 &+\int_{g(s)}^{h(s)}\int_{h(t)}^{\infty}\sum_{i=1}^2\beta_iJ_i(x-y)w_{it}(\tau_i(x),x)(t-s)\dy\dx,
 \end{align*}
 where $\tau_i(x)$ is a number lying between $s$ and $t$.
Therefore
 \begin{align*}
 |h'(t)-h'(s)|\le&\; 2M\big(|g(t)-g(s)|+|h(t)-h(s)|\big)+2M|h(t)-h(s)|+2M(h_\infty-g_\infty)|t-s|\\[1mm]
 \le&\;2M\big(3M'+h_\infty-g_\infty\big)|t-s|,
 \end{align*}
where $M'=\|g'\|_\infty+\|h'\|_\infty$. This shows that $h'(t)$ is Lipschitz continuous in $[0,\infty)$.
And hence, $\dd\lim_{t\to\infty}h'(t)=0$ due to $h'(t)\geq 0$ and $\dd\lim_{t\to\infty}h(t)=h_\infty\in(0,\infty)$. Similarly, $\dd\lim_{t\to\infty}g'(t)=0$. \end{proof}

\begin{lem}\lbl{l3.2}\, Let $J$ satisfy the condition {\bf (J)} and $J(x)>0$ in $\mathbb{R}$.
Suppose that $g,\,h\in C^1([0,\infty))$, $g(0)<h(0)$, $g'(t)\le 0$, $h'(t)\ge 0$, and
\eqref{3.2} holds. If $(w,\,g,\,h)$ satisfies, for some positive constants $\beta$ and $M$,
 \[0\le w\le M \ {\rm in }\ D_{g,h}^\infty, \ \ \ w(t,g(t))=w(t,h(t))=0, \ \ \forall\ t\ge 0,\vspace{-2mm}\]
  \bess
 h'(t)\ge\beta\int_{g(t)}^{h(t)}\int_{h(t)}^{\infty}J(x-y)w(t,x)\dy\dx, \ \ \forall\ t>0,
 \eess
and $\dd\lim_{t\to\infty}h'(t)=0$, then
  \[\lim_{t\to\infty}\int_{g(t)}^{h(t)}w(t,x)\dx=0, \ \ \ \int_0^\infty\int_{g(t)}^{h(t)}w(t,x)\dx\dt<\infty.\]
 \end{lem}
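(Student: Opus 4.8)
The plan is to convert the speed estimate for $h$ directly into a space estimate (and then a space--time $L^1$ estimate) for $w$, using only that the moving domain stays in a bounded interval. Write $h_\infty=\dd\lim_{t\to\infty}h(t)$ and $g_\infty=\dd\lim_{t\to\infty}g(t)$; by \eqref{3.2} both limits are finite. Since $h$ is nondecreasing and $g$ is nonincreasing, for every $t\ge0$ and $x\in(g(t),h(t))$ we have $0<h(t)-x\le h(t)-g(t)\le h_\infty-g_\infty=:\ell^*$.

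The key step is a uniform positive lower bound for the ``outward flux weight''. For $x\in(g(t),h(t))$, the change of variable $z=y-x$ together with the evenness of $J$ gives
\[\int_{h(t)}^{\infty}J(x-y)\dy=\int_{h(t)-x}^{\infty}J(z)\dz\ge\int_{\ell^*}^{\infty}J(z)\dz=:c_0,\]
the last inequality holding because $z\mapsto\int_z^{\infty}J$ is nonincreasing and $h(t)-x\le\ell^*$. Since $J$ is continuous and $J(x)>0$ for all $x\in\mathbb{R}$, we have $c_0>0$. This is precisely where the extra hypothesis $J(x)>0$ in $\mathbb{R}$ enters, and it is essential: under \textbf{(J)} alone $J$ could have compact support and $c_0$ could vanish.

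Substituting this bound into the differential inequality for $h'$ and using $w\ge0$ yields, for all $t>0$,
\[\int_{g(t)}^{h(t)}w(t,x)\dx\le\frac1{\beta c_0}\,h'(t).\]
Since $h'(t)\to0$ as $t\to\infty$ and the left-hand side is nonnegative, the first assertion $\dd\lim_{t\to\infty}\int_{g(t)}^{h(t)}w(t,x)\dx=0$ follows by squeezing. For the second assertion, integrate this inequality in $t$ over $(0,T)$ and use $h\in C^1$ with $h'\ge0$:
\[\int_0^{T}\!\!\int_{g(t)}^{h(t)}w(t,x)\dx\dt\le\frac1{\beta c_0}\int_0^{T}h'(t)\dt=\frac{h(T)-h(0)}{\beta c_0}\le\frac{h_\infty-h(0)}{\beta c_0};\]
letting $T\to\infty$ and invoking monotone convergence gives $\dd\int_0^{\infty}\!\int_{g(t)}^{h(t)}w(t,x)\dx\dt<\infty$.

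I do not expect a genuine obstacle here: the whole argument rests on the single observation that, once the free boundaries remain in a bounded interval, $\int_{h(t)}^{\infty}J(x-y)\dy$ is bounded below by a fixed positive constant uniformly in $x\in(g(t),h(t))$ and $t$. I also note that the boundary conditions $w(t,g(t))=w(t,h(t))=0$, although listed among the hypotheses, are not actually needed for this particular lemma, nor is any regularity of $w_t$.
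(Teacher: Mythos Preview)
Your proof is correct and essentially identical to the paper's: both obtain a uniform positive lower bound $\int_{h(t)}^\infty J(x-y)\,\dy\ge\sigma_0>0$ for $x\in(g(t),h(t))$ from the boundedness of the interval and the positivity of $J$, and then deduce both conclusions from $h'(t)\ge\beta\sigma_0\int_{g(t)}^{h(t)}w(t,x)\,\dx$. The only cosmetic difference is the direction of the change of variable (you use $z=y-x$ with evenness, the paper uses $z=x-y$), yielding the same constant $c_0=\sigma_0=\int_{h_\infty-g_\infty}^\infty J(z)\,\dz$.
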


\begin{proof}\,
Since $J(x)>0$ in $\mathbb{R}$, we have
 \[\int_{h(t)}^\infty J(x-y)\dy=\int_{-\infty}^{x-h(t)}J(z)\dz\ge \int_{-\infty}^{g_\infty-h_\infty}J(z)\dz=:\sigma_0>0 \ \ \ \forall \ x\in (g(t), h(t)),\; t>0.\]
It follows that
 \[ \frac 1\beta h'(t)\ge\int_{g(t)}^{h(t)}\int_{h(t)}^{\infty}J(x-y)w(t,x)\dy\dx
 \ge\sigma_0 \int_{g(t)}^{h(t)}w(t,x)\dx. \]
Hence
 \bess
 \lim_{t\to\infty}\int_{g(t)}^{h(t)}w(t,x)\dx=0, \ \ \ \int_0^\infty\int_{g(t)}^{h(t)}w(t,x)\dx\dt\leq \frac{h_\infty-h(0)}{\beta\sigma_0}
 \eess
as $\dd\lim_{t\to\infty}h'(t)=0$. The proof is complete. \end{proof}

For $a<b$, $i=1,2$ and $\theta\in C([a, b])$, we define the operator $\mathcal{L}^{d_i}_{(a,b)}+\theta$ by
 $$\left(\mathcal{L}^{d_i}_{(a,b)}+\theta\right)\varphi(x):= d_i\left(\int_a^b J_i(x-y)
 \varphi(y)\dy-\varphi(x)\right)+\theta(x)\varphi(x),\ \ x\in[a, b], \ i=1,2.$$
 The generalized principal eigenvalue of $\mathcal{L}^{d_i}_{(a,b)}+\theta$ is given by
 \[\lambda_p(\mathcal{L}^{d_i}_{(a,b)}+\theta):=\inf\{\lambda\in\mathbb R: (\mathcal{L}^{d_i}_{(a,b)}+\theta) \phi\leq\lambda\phi \mbox{ in $[a,b]$ for some } \phi\in C([a,b]), \;\phi>0\}.\]

\begin{theo}\label{th3.3}\,Assume that $J_1$ and $J_2$ satisfy {\rm\bf (J)}, $J_1(x)>0$, $J_2(x)>0$ in $\mathbb{R}$, and that $(f_1,\,f_2)$ satisfies either \eqref{1.4} or \eqref{1.5}. Let $(u_1, u_2,g,h)$ be the unique solution of \eqref{1.2}. If $h_\infty-g_\infty<\infty$, then
 \bes
  \lim_{t\to\infty}\max_{g(t)\le x\le h(t)}u_i(t,x)=0, \ \ \ i=1,\, 2;
 \lbl{3.4}\ees
moreover,
 \bes
 \lambda_p\left(\mathcal{L}^{d_i}_{(g_\infty, h_\infty)}+a_i\right)\leq 0, \ \
 i=1,\,2.
 \lbl{3.5}\ees
\end{theo}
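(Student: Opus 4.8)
The plan is to establish the two conclusions in sequence: first the uniform decay \eqref{3.4}, then the eigenvalue inequality \eqref{3.5}, using \eqref{3.4} together with the structure of the Lotka--Volterra terms.

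For \eqref{3.4}, I would begin by invoking Lemma \ref{l3.1} (with $\beta_i=\mu_i$, $w_i=u_i$): since $h_\infty-g_\infty<\infty$, it gives $\lim_{t\to\infty}h'(t)=\lim_{t\to\infty}g'(t)=0$. Next I would apply Lemma \ref{l3.2} to the first species. The point is that in both \eqref{1.4} and \eqref{1.5} we have $f_1=u_1(a_1-b_1u_1-c_1u_2)$, and the free boundary condition gives $h'(t)\ge\mu_1\int_{g(t)}^{h(t)}\int_{h(t)}^\infty J_1(x-y)u_1(t,x)\dy\dx$, so Lemma \ref{l3.2} yields $\int_{g(t)}^{h(t)}u_1(t,x)\dx\to 0$ and $\int_0^\infty\!\int_{g(t)}^{h(t)}u_1\,\dx\dt<\infty$. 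The same argument applied with $w=u_2$ (using the $\mu_2$-term in $h'$) gives $\int_{g(t)}^{h(t)}u_2(t,x)\dx\to 0$ and the corresponding time-integral is finite. To upgrade the $L^1$-in-$x$ decay to uniform (sup-norm) decay, I would use the equation for $u_i$: write $u_{it}\le d_i\int_{g(t)}^{h(t)}J_i(x-y)u_i(t,y)\dy-d_iu_i+C u_i$ for a suitable constant $C$ (from {\bf (f1)} for $i=1$, and from {\bf (f2)} bounding $f_2\le L u_2$ for $i=2$, using the a priori bounds $u_i\le A_i$), then a Duhamel/Gronwall estimate together with $\sup_{\mathbb R}J_i<\infty$ shows that $\max_{x}u_i(t,x)$ is controlled by $\int u_i$ at earlier times plus a decaying contribution; combined with equicontinuity in $t$ (from $u_{it}\in L^\infty$) and the vanishing of the spatial integrals, a contradiction-compactness argument forces $\max_{g(t)\le x\le h(t)}u_i(t,x)\to 0$. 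I expect this sup-norm upgrade to be the main technical obstacle, since the nonlocal operator does not smooth, so one cannot simply quote parabolic regularity as in the local-diffusion case; this is presumably where the "new technique" advertised in the introduction enters.

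For \eqref{3.5}, I would argue by contradiction: suppose $\lambda_p(\mathcal L^{d_i}_{(g_\infty,h_\infty)}+a_i)>0$ for some $i$. By a continuity/monotonicity property of the generalized principal eigenvalue in the domain (intervals approximating $(g_\infty,h_\infty)$ from inside, or using $(g(t),h(t))\nearrow(g_\infty,h_\infty)$), there is $T$ large and $\ep>0$ with $\lambda_p(\mathcal L^{d_i}_{(g(T),h(T))}+a_i)\ge \ep>0$, with a positive eigenfunction $\phi$. Since by \eqref{3.4} $u_j(t,x)$ is uniformly small for $t$ large, the Lotka--Volterra nonlinearity satisfies $f_i(t,x,u_1,u_2)\ge (a_i-\delta)u_i$ on $(g(t),h(t))$ for $t\ge T$ with $\delta$ as small as we like (for the competition model both competitive terms are $\le 0$ but controlled by the small $u_j$'s; for predator-prey, for $i=1$ the term $-c_1u_2$ is small, and for $i=2$ the term $+c_2u_1\ge 0$ only helps). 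Then $\underline u(t,x)=\eta e^{\ep t/2}\phi(x)$ is a subsolution of the $u_i$-equation on $(g(T),h(T))$ for small $\eta$, and the Maximum Principle (Lemma \ref{l2.2}, applied to $u_i-\underline u$, noting $u_i>0$ in the interior so $u_i\ge\eta\phi$ at $t=T$ for small $\eta$) gives $u_i(t,x)\ge \eta e^{\ep t/2}\phi(x)\to\infty$, contradicting $u_i\le A_i$. Hence \eqref{3.5} holds.

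The one point needing care in the last step is that $u_i$ need not vanish at $x=g(T),h(T)$ matching $\underline u$'s boundary behavior — but $\phi$ is merely positive and continuous on $[g(T),h(T)]$, not required to vanish at the endpoints, and $u_i(T,\cdot)>0$ on the open interval with $u_i(T,g(T))=u_i(T,h(T))=0$; choosing the comparison on a slightly smaller interval $[g(T)+\rho,h(T)-\rho]$ where $u_i(T,\cdot)\ge$ const $>0$, with the eigenvalue still positive for small $\rho$ by continuity, and using Lemma \ref{l2.2} on that subinterval (whose boundary data for $u_i$ is now $\ge0$), repairs this. The monotone dependence of $\lambda_p(\mathcal L^{d_i}_{(a,b)}+\theta)$ on $(a,b)$ is the only external fact I am assuming beyond the excerpt; it is standard for such nonlocal operators and presumably recalled in Section 3.
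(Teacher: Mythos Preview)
Your plan for \eqref{3.5} is essentially the paper's argument: assume the eigenvalue is positive, pass to a slightly smaller interval where it remains positive, use \eqref{3.4} to absorb the cross term into a small perturbation of $a_i$, and build a subsolution that forces $u_i$ to stay bounded below by a positive quantity, contradicting \eqref{3.4}. The paper does this via the positive steady state of the scalar logistic problem on the smaller interval rather than an exponentially growing eigenfunction, but the mechanism is the same and your version works.

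The gap is in \eqref{3.4}. Your first two steps (Lemmas \ref{l3.1} and \ref{l3.2}) match the paper and give $\int_{g(t)}^{h(t)}u_i(t,x)\dx\to 0$ and $\int_0^\infty\!\int u_i\,\dx\,\dt<\infty$. But the upgrade from $L^1$-in-$x$ decay to sup-norm decay cannot be done by Duhamel/Gronwall plus compactness as you sketch. The differential inequality $u_{it}\le d_i\int J_iu_i-d_iu_i+Cu_i$ yields a representation whose homogeneous part carries a factor $e^{(C-d_i)(t-s)}$, and in general $C\ge a_i$ may exceed $d_i$, so this blows up rather than decays. More fundamentally, there is no equicontinuity in $x$: the nonlocal operator does not regularise, so a sequence of narrow spikes with $\max_x u_i(t_n,\cdot)\ge\varepsilon$ and $\int u_i(t_n,\cdot)\to 0$ is not ruled out by any soft compactness argument. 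This is exactly why the paper flags a ``new technique'' here.

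What the paper actually does for the upgrade is this. From $\int_0^\infty\!\int u_i\,\dx\,\dt<\infty$ and $u_{it}\in L^\infty$ one first deduces $\lim_{t\to\infty}u_i(t,x)=0$ for \emph{a.e.}\ $x\in[g(0),h(0)]$. Set $M_i(t)=\max_x u_i(t,x)$; the paper computes the one-sided derivatives $M_i'(t\pm 0)$ in terms of $u_{it}$ at argmax points. If $\sigma^*:=\limsup_{t\to\infty}M_1(t)>0$, one extracts $t_n\to\infty$, $\xi_n$ with $u_1(t_n,\xi_n)\to\sigma^*$ and $u_{1t}(t_n,\xi_n)\to 0$. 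Crucially, $\int_{g(t)}^{h(t)}J_1(x-y)u_1(t,y)\dy\le\|J_1\|_\infty\int u_1(t,\cdot)\to 0$ uniformly in $x$, so evaluating the $u_1$-equation at $(t_n,\xi_n)$ and letting $n\to\infty$ gives $0=-d_1\sigma^*+\sigma^*(a_1-b_1\sigma^*-c_1\rho)<(a_1-d_1)\sigma^*$, forcing $a_1>d_1$. But then pick any $x_0$ with $u_i(t,x_0)\to 0$ (such $x_0$ exist a.e.): for large $t$ the equation gives $u_{1t}(t,x_0)\ge\tfrac{a_1-d_1}{2}u_1(t,x_0)$, so $u_1(t,x_0)\to\infty$, a contradiction. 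For the predator--prey case one first handles $u_1$ the same way and then $u_2$. This argmax-plus-pointwise-limit argument is the missing idea in your sketch.
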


\begin{proof}\,As $h_\infty-g_\infty<\infty$ and $u_1, u_2$ are bounded, it follows from the
first equation of \eqref{1.2} that $|u_{it}|$ is bounded for $i=1,2$. Lemma \ref{l3.1} then infers that $\dd\lim_{t\to\infty}g'(t)=\lim_{t\to\infty}h'(t)=0$. Since $u_i>0$ and $\mu_i>0$, we have
 \begin{align*}
 h'(t)\ge\mu_i\int_{g(t)}^{h(t)}\int_{h(t)}^{\infty}J_i(x-y)u_i(t,x)\dy\dx, \ \ \forall \ t>0,\ \
 i=1,\, 2.
 \end{align*}
Applying Lemma \ref{l3.2} we thus obtain
 \[\lim_{t\to\infty}\int_{g(t)}^{h(t)}u_i(t,x)\dx=0, \ \; \; \int_0^\infty\int_{g(t)}^{h(t)}u_i(t,x)\dx\dt<\infty, \ \
 i=1,2.\]
We extend $u_i(t,x)$ by 0 for $x\not\in[g(t), h(t)]$ and denote the extended function still by $u_i(t,x)$. Then we may rewrite the above inequality
as
  \[\int_0^\infty\int_{g_\infty}^{h_\infty}u_i(t,x)\dx\dt<\infty.\]
By Fubini's theorem we have
 \[\int_{g_\infty}^{h_\infty}\int_0^\infty u_i(t,x)\dt\dx=\int_0^\infty\int_{g_\infty}^{h_\infty}u_i(t,x)\dx\dt<\infty.\]
Therefore, for $i=1,2$, the function
 \[U_i(x):=\int_0^\infty u_i(t,x)\dt\]
is finite for a.e. $x\in (g_\infty, h_\infty)$.
Since $u_i(t,x)\geq 0$ and $u_{it}\in L^\infty(D^\infty_{g,h})$, it follows, in particular, that
\bes\label{3.6}
\lim_{t\to\infty} u_i(t,x)=0 \ \mbox{ for almost every } x\in [g(0),h(0)].
\ees

Define, for $i=1,2$,
 \[M_i(t):=\max_{x\in [g(t), h(t)]}u_i(t,x).\]
To complete the proof of \eqref{3.4}, it suffices to show that
\bes\label{3.7}
\lim_{t\to\infty} M_i(t)=0 \ \mbox{ for } i=1,2.
\ees

To this end, we need to prove some useful properties of $M_i(t)$ first. Clearly $M_i(t)$ is continuous. Define
  \[ X_i(t):=\{x\in (g(t), h(t)): u_i(t,x)=M_i(t)\}.\]
Then $X_i(t)$ is a compact set for each $t>0$. Therefore, there exist $\underline\xi_i(t),\;\overline\xi_i(t)\in X_i(t)$ such that
  \[u_{it}(t, \underline\xi_i(t))=\min_{x\in X_i(t)}u_{it}(t,x),\quad u_{it}(t, \overline\xi_i(t))=\max_{x\in X_i(t)}u_{it}(t,x).\]
We claim that $M_i(t)$ satisfies, for each $t>0$,
\bes\label{3.8}
\left\{\begin{array}{ll}
M_i'(t+0):=\dd\lim_{s>t, s\to t}\frac{M_i(s)-M_i(t)}{s-t} =u_{it}(t,\overline\xi_i(t)),\\[3mm]
 M_i'(t-0):=\dd\lim_{s<t, s\to t}\frac{M_i(s)-M_i(t)}{s-t} =u_{it}(t,\underline\xi_i(t)).
 \end{array}\right.
\ees
Indeed, for any fixed $t>0$ and $s>t$, we have
 \[ u_i(s,\overline\xi_i(t))-u_i(t,\overline\xi_i(t))\leq M_i(s)-M_i(t)\leq  u_i(s,\overline \xi_i(s))-u_i(t,\overline\xi_i(s)).\]
It follows that
 \bes\label{3.9}
\liminf_{s>t, s\to t}\frac{M_i(s)-M_i(t)}{s-t} \geq u_{it}(t,\overline\xi_i(t)),
 \ees
and
 \[\limsup_{s>t, s\to t}\frac{M_i(s)-M_i(t)}{s-t}\leq \limsup_{s>t, s\to t} \frac{u_i(s,\overline \xi_i(s))-u_i(t,\overline\xi_i(s))}{s-t}.\]
Let $s_n\searrow t$ satisfy
  \[\lim_{n\to\infty}\frac{u_i(s_n,\overline \xi_i(s_n))-u_i(t,\overline\xi_i(s_n))}{s_n-t}= \limsup_{s>t, s\to t} \frac{u_i(s,\overline \xi_i(s))-u_i(t,\overline\xi_i(s))}{s-t}.\]
By passing to a subsequence if necessary, we may assume that $\overline \xi_i(s_n)\to \xi$ as $n\to\infty$. Then $u_i(t, \xi)=\dd\lim_{n\to\infty} M_i(s_n)=M_i(t)$ and hence
$\xi\in X_i(t)$.
Due to the continuity of $u_{it}(t,x)$, it follows immediately that
 \[\lim_{n\to\infty}\frac{u_i(s_n,\overline \xi_i(s_n))-u_i(t,\overline\xi_i(s_n))}{s_n-t}=u_{it}(t,\xi)\leq u_{it}(t,\overline \xi_i(t)).\]
We thus obtain
 \[\limsup_{s>t, s\to t}\frac{M_i(s)-M_i(t)}{s-t}\leq u_{it}(t,\overline \xi_i(t)).\]
Combining this with \eqref{3.9} we obtain
 \[M_i'(t+0)=u_{it}(t,\overline \xi_i(t)).\]
Analogously we can show
 \[M_i'(t-0)=u_{it}(t,\underline \xi_i(t)).\]
Let us note from \eqref{3.8} that $M_i'(t-0)\leq M_i'(t+0)$ for all $t>0$. Therefore if $M_i(t)$ has a local maximum at $t=t_0$, then
$M_i'(t_0)$ exists and $M_i'(t_0)=0$. Moreover, if $M_i(t)$ is monotone nondecreasing for all large $t$ and $\dd\lim_{t\to\infty} M_i(t)=\sigma>0$, then
necessarily $M_i'(t_n-0)\to 0$ along some sequence $t_n\to\infty$; and if $M_i(t)$ is monotone nonincreasing for all large $t$ and $\dd\lim_{t\to\infty} M_i(t)=\sigma>0$, then
necessarily $M_i'(s_n+0)\to 0$ along some sequence $s_n\to\infty$. These properties of $M_i(t)$ will be used below.

We are now ready to prove \eqref{3.7}.
We first consider the situation that $(f_1, f_2)$ satisfies \eqref{1.4}. Arguing indirectly we assume that there exists $i\in\{1,2\}$ such that
the desired identity above does not hold for $M_i(t)$. For definiteness, we assume that $i=1$. Then necessarily
 \[\sigma^*:=\limsup_{t\to\infty} M_1(t)\in (0,\infty).\]
By the above stated properties of $M_i(t)$,  there exists a sequence $t_n>0$ increasing to $\infty$ as $n\to\infty$, and $\xi_n\in\{\underline \xi_1(t_n),\overline\xi_1(t_n)\}$ such that
  \[\lim_{n\to\infty} M_1(t_n)=\sigma^*,\quad \lim_{n\to\infty}u_{1t}(t_n, \xi_n)=0.\]
By passing to a subsequence of $(t_n,\xi_n)$ if necessary, we may assume, without loss of generality,
 \[\lim_{n\to\infty}u_2(t_n, \xi_n)=\rho\in [0,\infty).\]
Since
 \[\lim_{t\to\infty}\int_{g(t)}^{h(t)}u_1(t,x)\dx=0,\]
and $\sup_{x\in\R} J_1(x)<+\infty$, we have
 \[\lim_{t\to\infty}\int_{g(t)}^{h(t)}J_1(x-y)u_1(t,y)\dy=0 \;\mbox{
uniformly for $x\in\R$. }
\]
We now make use of the identity
 \[u_{1t}=d_1\int_{g(t)}^{h(t)}J_1(x-y)u_1(t,y)\dy-d_1u_1+u_1(a_1-b_1u_1-c_1u_2)\]
with $(t,x)=(t_n, \xi_n)$, and take $n\to\infty$ to obtain
 \[0=-d_1 \sigma^*+\sigma^*(a_1-b_1\sigma^*-c_1\rho)<(a_1-d_1)\sigma^*.\]
It follows that $a_1>d_1$. We show next that this leads to a contradiction.

Indeed, by \eqref{3.6}, there exists $x_0\in (g(0), h(0))$ such that
 \[\lim_{t\to\infty} u_i(t,x_0)=0 \ \mbox{ for } i=1,2.\]
Therefore we can find $T>0$ large so that
 \[-d_1+a_1-b_1u_1(t,x_0)-c_1u_2(t,x_0)>(a_1-d_1)/2>0 \ \mbox{ for }t\geq T.\]
It then follows from the equation satisfied by $u_1$ that
 \[u_{1t}(t,x_0)\geq \frac{a_1-d_1}{2}u_1(t,x_0) \ \mbox{ for } t\geq T,\]
which implies $u_1(t,x_0)\to\infty$ as $t\to\infty$, a contradiction to the boundedness of $u_1$.
This completes the proof of \eqref{3.4} for the case that $(f_1, f_2)$ satisfies \eqref{1.4}.

Next we consider the case that \eqref{1.5} is satisfied. The proof mainly follows the above argument for the competition case,
though some small changes are needed. In this case we can similarly show that
$\dd\lim_{t\to\infty} M_1(t)=0$. Hence
  \[\lim_{t\to\infty} \max_{x\in[g(t), h(t)]}u_1(t,x)=0.\]
With this at hand, we may now repeat the argument for the competition case to deduce that $\dd\lim_{t\to\infty} M_2(t)=0$.
We have thus proved \eqref{3.4}.

In the following we prove \eqref{3.5}. Suppose on the contrary that $\lambda_p(\mathcal{L}^{d_1}_{(g_\infty, h_\infty)}+ a_1)>0$. Then $\lambda_p(\mathcal{L}^{d_1}_{(g_\infty+\ep, h_\infty-\ep)}+a_1-\ep)>0$ for small $\ep>0$, say $\ep\in (0,\ep_1)$.
Due to \eqref{3.4}, for such $\ep$, there exists $ T_{\ep}>0$ such that
 \[c_1u_2(t,x)<\ep, \ \ \forall\ t\ge T_{\ep}, \ \ g(t)\le x\le h(t),\]
and
 \[h(t)>h_\infty-\ep, \ \ g(t)<g_\infty+\ep, \ \ \forall\ t\ge T_{\ep}.\]
Consider the auxiliary problem
 \bes\label{3.10}
 \left\{\begin{aligned}
&w_t=d_1\int_{g_\infty+\ep}^{h_\infty-\ep}J_1(x-y)w(t,y)\dy-d_1w +f_\ep(w),
&& t >  T_{\ep},~x\in [g_\infty+\ep, h_\infty-\ep],\\
&w(T_{\ep} ,x)=u_1(T_{\ep}, x), && x\in [g_\infty+\ep, h_\infty-\ep],
 \end{aligned}\right.\ees
where $f_\ep(w)=w(a_1-\ep-b_1w)$. Since $\lambda_p(\mathcal{L}^{d_1}_{(g_\infty+\ep, h_\infty-\ep)}+ f_\ep'(0))>0$, it is well known (see \cite{BZ07, Cov})  that
the  solution $ w_{\ep}(t,x)$ of \eqref{3.10} converges to the unique positive steady state $W_{\ep}(x)$ of \eqref{3.10} uniformly in $[g_\infty+\ep, h_\infty-\ep]$ as $t \to\infty$.
Moreover,  a simple comparison argument yields
 $$u_1(t,x)\ge  w_{\ep}(t,x),~\ \forall~t> T_{\ep},~x\in[g_\infty+\ep,h_\infty-\ep].$$
Thus, there exists $T_{1\ep}> T_{\ep}$ such that
  $$u_1(t,x)\ge {1\over 2} W_{\ep}(x)>0,~\ \forall~t> T_{1\ep}, ~x\in[g_\infty+\ep,h_\infty-\ep].$$
Clearly this is a contradiction to \eqref{3.4}. The proof is complete. \end{proof}

\begin{coro}\label{co3.4}\, Suppose that $J_1$, $J_2$ and $(f_1, f_2)$ satisfy the conditions in Theorem {\rm\ref{th3.3}}, and $(u_1, u_2, g, h)$ is the unique solution of \eqref{1.2}.
If $a_1\geq d_1$ or $a_2\geq d_2$, then necessarily $h_\infty-g_\infty=\infty$.
\end{coro}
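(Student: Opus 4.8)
\emph{Proof proposal.} The plan is to argue by contradiction and to reduce the claim to the eigenvalue estimate \eqref{3.5} established in Theorem \ref{th3.3}. Assume $h_\infty-g_\infty<\infty$. Then Theorem \ref{th3.3} applies and yields $\lambda_p\big(\mathcal L^{d_i}_{(g_\infty,h_\infty)}+a_i\big)\le 0$ for $i=1,2$. Hence it suffices to prove the elementary fact that for every bounded interval $(a,b)$,
\[
\lambda_p\big(\mathcal L^{d_i}_{(a,b)}+a_i\big)>a_i-d_i,\qquad i=1,2 ,
\]
since, taking $(a,b)=(g_\infty,h_\infty)$, the hypothesis $a_i\ge d_i$ for some $i$ would then force $\lambda_p\big(\mathcal L^{d_i}_{(g_\infty,h_\infty)}+a_i\big)>0$, contradicting the inequality just recalled; this contradiction gives $h_\infty-g_\infty=\infty$.

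To prove the displayed inequality I would work straight from the variational definition of $\lambda_p$. Let $\lambda$ be admissible, i.e. there is $\phi\in C([a,b])$ with $\phi>0$ and $\big(\mathcal L^{d_i}_{(a,b)}+a_i\big)\phi\le\lambda\phi$ on $[a,b]$. Since $[a,b]$ is compact, $\phi$ attains its minimum at some $x_0\in[a,b]$, with $m:=\phi(x_0)>0$. Evaluating the differential inequality at $x=x_0$ gives
\[
d_i\int_a^b J_i(x_0-y)\phi(y)\dy-d_i m+a_i m\le\lambda m .
\]
Because $\phi\ge m$ on $[a,b]$ and $J_i\ge 0$, the integral is at least $d_i m\int_a^b J_i(x_0-y)\dy$, and the key observation is that
\[
\delta_i:=\inf_{x\in[a,b]}\int_a^b J_i(x-y)\dy>0 ,
\]
since $x\mapsto\int_a^b J_i(x-y)\dy$ is continuous on the compact set $[a,b]$ and strictly positive there (here $J_i>0$ on $\mathbb R$, as inherited from Theorem \ref{th3.3}). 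Thus $d_i m\delta_i-d_i m+a_i m\le\lambda m$, i.e. $\lambda\ge a_i-d_i+d_i\delta_i$. As this holds for every admissible $\lambda$ (and the admissible set is nonempty, e.g. $\phi\equiv 1$ works for any $\lambda\ge a_i$, so $\lambda_p$ is a genuine real number), we conclude $\lambda_p\big(\mathcal L^{d_i}_{(a,b)}+a_i\big)\ge a_i-d_i+d_i\delta_i>a_i-d_i$, as required.

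The bulk of the work is of course already contained in Theorem \ref{th3.3}; the only genuinely new ingredient here is the strict inequality $\lambda_p\big(\mathcal L^{d_i}_{(a,b)}+a_i\big)>a_i-d_i$, and within that the one delicate point is the strict positivity of $\delta_i$ — this is precisely where the positivity of the kernel enters, and it fails for general nonnegative kernels that vanish on an interval only if $[a,b]$ lies outside the support, which cannot happen when $J_i>0$ everywhere. Everything else is a direct evaluation of the test-function inequality at the minimum point of $\phi$, so I expect no further obstacles.
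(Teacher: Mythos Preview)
Your proof is correct and follows the same strategy as the paper's: argue by contradiction, invoke the eigenvalue bound \eqref{3.5} from Theorem \ref{th3.3}, and then derive a contradiction from the strict positivity of $\lambda_p\big(\mathcal L^{d_i}_{(g_\infty,h_\infty)}+a_i\big)$ when $a_i\ge d_i$. The only difference is that the paper obtains this positivity by citing \cite[Proposition 3.4]{CDLL}, whereas you give a short self-contained argument via evaluation of the test-function inequality at the minimum of $\phi$; your argument is valid (and, incidentally, $\delta_i>0$ already follows from condition {\bf (J)} alone, since $J_i(0)>0$ and $J_i$ is continuous, so the everywhere-positivity of $J_i$ is not actually needed at this step).
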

\begin{proof}
Arguing indirectly we assume that $h_\infty-g_\infty<\infty$ and $a_i\geq d_i$ for some $i\in\{1,2\}$. Thanks to \cite[Proposition 3.4]{CDLL},
  \[\lambda_p\left(\mathcal{L}^{d_i}_{(g_\infty, h_\infty)}+a_i\right)>0.\]
This is a contradiction to \eqref{3.5}.
\end{proof}

We next consider the case that
 \bes\label{3.11}
a_i<d_i \ \ \mbox{ for }\ i=1,2.
 \ees
In this case, in view of \cite[Proposition 3.4]{CDLL}, $\lambda_p\big(\mathcal{L}^{d_i}_{(0,\ell)}
+a_i\big)<0$ if $0<\ell\ll 1$, and $\lambda_p\big(\mathcal{L}^{d_i}_{(0,\ell)}+a_i\big)>0$
if $\ell\gg 1$, and there exist two positive constants $\ell_1$ and $\ell_2$ such that
  \[\lambda_p\left(\mathcal{L}^{d_i}_{(0,\ell_i)}+a_i\right)=0,\; i=1,2.\]

Define
  \[ \ell_*=\min\{\ell_1,\ell_2\}. \]
We have the following result.

\begin{theo}\label{th3.5}\, Assume that $J_i(x)>0$ in $\R$ for $i=1, 2$,  $(f_1,\,f_2)$ satisfies either \eqref{1.4} or \eqref{1.5}, and \eqref{3.11} holds. Let $(u_1, u_2, g, h)$ be the unique solution of \eqref{1.2}. Then the following conclusions hold:

{\rm (i)} If $h_\infty-g_\infty<\infty$, then $h_\infty-g_\infty\leq \ell_*$.

{\rm (ii)} If $h_0\geq \ell_*/2$, then $h_\infty-g_\infty=\infty$.

{\rm (iii)} If $h_0<\ell_*/2$, then there exist two positive numbers $\Lambda^*\ge\Lambda_*>0$ such that $h_\infty-g_\infty<\infty$ when $0<\mu_1+\mu_2\leq\Lambda_*$ and $h_\infty-g_\infty=\infty$ when  $\mu_1 +\mu_2>\Lambda^*$.
\end{theo}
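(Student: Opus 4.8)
The plan is to read off parts (i) and (ii) from Theorem \ref{th3.3}, to prove the spreading half of (iii) via a front-expansion estimate that is uniform in $\mu_1+\mu_2$, and to prove the vanishing half of (iii) by building a supersolution whose support is a slowly expanding bounded interval. For (i), suppose $h_\infty-g_\infty<\infty$. Theorem \ref{th3.3} gives $\lambda_p\big(\mathcal L^{d_i}_{(g_\infty,h_\infty)}+a_i\big)\le 0$ for $i=1,2$; since $\lambda_p\big(\mathcal L^{d_i}_{(a,b)}+a_i\big)$ depends only on $b-a$ and, by \cite[Proposition 3.4]{CDLL}, is continuous and strictly increasing in $b-a$ with value $0$ exactly at $b-a=\ell_i$, we get $h_\infty-g_\infty\le\ell_i$ for each $i$, hence $h_\infty-g_\infty\le\ell_*$. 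For (ii), assume $h_0\ge\ell_*/2$ and, for contradiction, that vanishing occurs; then (i) gives $h_\infty-g_\infty\le\ell_*\le2h_0$, while \eqref{2.2} makes $t\mapsto h(t)-g(t)$ strictly increasing, so $h_\infty-g_\infty>h(t)-g(t)>2h_0$ for every $t>0$, a contradiction.

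For the spreading half of (iii) I would show there is a constant $c_0>0$, independent of $\mu_1,\mu_2$, with $h(1)-g(1)\ge 2h_0+c_0(\mu_1+\mu_2)$ whenever vanishing happens; granting this, setting $\Lambda^*:=(\ell_*-2h_0)/c_0>0$ forces $h_\infty-g_\infty\ge h(1)-g(1)>\ell_*$ for $\mu_1+\mu_2>\Lambda^*$, which by (i) is incompatible with vanishing. To prove the estimate, use that the solution satisfies $u_i(t,x)\ge e^{-[d_i+L(A_1,A_2)]t}u_{i0}(x)$ for $|x|\le h_0$, $t>0$ (this is \eqref{2.9}), a bound not involving $\mu_1,\mu_2$ since $A_1,A_2$ are not. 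Assuming vanishing, (i) together with $g_\infty\le-h_0$ gives $h_\infty\le\ell_*-h_0$, so for $x\in[-h_0,h_0]$ and $t>0$,
\[
\int_{h(t)}^{\infty}J_i(x-y)\,dy=\int_{-\infty}^{x-h(t)}J_i(z)\,dz\ \ge\ \int_{-\infty}^{-\ell_*}J_i(z)\,dz=:c_{J_i}>0,
\]
the positivity coming from $J_i>0$ on $\mathbb R$. Inserting these two bounds into the free boundary equations of \eqref{1.2} and integrating over $t\in[0,1]$ yields the estimate with $c_0:=2\min_i\big\{c_{J_i}e^{-[d_i+L(A_1,A_2)]}\int_{-h_0}^{h_0}u_{i0}(x)\,dx\big\}$.

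For the vanishing half of (iii), since $2h_0<\ell_*$ fix $\delta_0>0$ with $2(h_0+\delta_0)<\ell_*$, so that on $I:=(-h_0-\delta_0,h_0+\delta_0)$ one has $\lambda_p\big(\mathcal L^{d_i}_I+a_i\big)<0$; by the definition of $\lambda_p$ there are $\sigma_i>0$ and positive $\phi_i\in C(\overline I)$ with $\big(\mathcal L^{d_i}_I+a_i\big)\phi_i\le-\sigma_i\phi_i$ on $\overline I$. With $\kappa:=\min\{\sigma_1,\sigma_2\}$ I would try
\[
\bar h(t)=h_0+\delta_0(1-e^{-\kappa t}),\qquad \bar g(t)=-\bar h(t),\qquad \bar u_1=Me^{-\kappa t}\phi_1,\qquad \bar u_2=Ne^{\theta(t)}\phi_2,
\]
on $(\bar g(t),\bar h(t))\subset I$, with $\theta(0)=0$ and $\theta'(t)=-\sigma_2+c_2M\|\phi_1\|_\infty e^{-\kappa t}$ in the predator--prey case \eqref{1.5} (simply $\theta(t)=-\sigma_2 t$ in the competition case \eqref{1.4}). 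Using the eigen-inequalities, the bounds $f_1\le a_1\bar u_1$, $f_2\le(a_2+c_2\bar u_1)\bar u_2$, and $e^{\theta(t)}\le e^{-\kappa t}e^{c_2M\|\phi_1\|_\infty/\kappa}$, one checks $(\bar u_1,\bar u_2)$ are supersolutions of the two equations and that $\bar u_i(t,x)\le C_0e^{-\kappa t}$ with $C_0$ independent of $\mu_1,\mu_2$; taking $M,N$ large (depending only on $u_{i0},\phi_i$) gives $\bar u_i(0,\cdot)\ge u_{i0}$ on $[-h_0,h_0]$. The two free boundary inequalities then reduce to $\delta_0\kappa e^{-\kappa t}=\bar h'(t)\ge(\mu_1+\mu_2)\,2(h_0+\delta_0)C_0e^{-\kappa t}$, i.e.\ to $\mu_1+\mu_2\le\Lambda_*:=\delta_0\kappa/\big(2(h_0+\delta_0)C_0\big)$. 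For such $\mu_1,\mu_2$, the comparison principle for \eqref{1.2} (the ``faster-expanding supersolution'' form, a routine extension of Lemma \ref{l2.2}, cf.\ \cite{CDLL}) gives $g\ge\bar g$, $h\le\bar h$, whence $h_\infty-g_\infty\le2(h_0+\delta_0)<\infty$. Finally $\Lambda^*\ge\Lambda_*$, since otherwise a value of $\mu_1+\mu_2$ in $(\Lambda^*,\Lambda_*]$ would both spread and vanish; alternatively one just replaces $\Lambda_*$ by $\min\{\Lambda_*,\Lambda^*\}$.

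The main obstacle is this last supersolution: unlike in the local-diffusion setting, the front speed in \eqref{1.2} is driven by the outward flux of \emph{both} species simultaneously, so the comparison function has to respect a coupled free boundary condition, and in the predator--prey case the $+c_2u_1u_2$ term in $f_2$ destroys a plain exponentially decaying supersolution for $u_2$ — hence the corrector $e^{\theta(t)}$. One also needs (and should state separately) the comparison principle for the coupled free boundary system \eqref{1.2} in the form used above; this is the two-species nonlocal analogue of the comparison lemmas underlying \cite{Wjde14,WZjdde14} and follows by combining Lemma \ref{l2.2} with the monotone dependence of $(g,h)$ on the data.
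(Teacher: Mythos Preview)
Your argument is correct; parts (i) and (ii) coincide with the paper's, while in (iii) your route differs from the paper in both halves, more visibly on the spreading side.

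For the vanishing half the paper proceeds via Lemma~\ref{l3.6}: it picks $h_1>h_0$ with $\lambda_i:=\lambda_p\big(\mathcal L^{d_i}_{(-h_1,h_1)}+a_i\big)<0$, lets $w_i$ solve the \emph{linear} problems \eqref{3.13} on the fixed interval $(-h_1,h_1)$, bounds $w_i\le C e^{\lambda_i t/2}\varphi_i$ via the principal eigenfunctions $\varphi_i$, and takes $r(t)=h_0+2(\mu_1+\mu_2)Ch_1\int_0^t e^{\lambda s/2}\,ds$ (predator--prey requires the small extra twist \eqref{3.16}--\eqref{3.17}). The comparison is then exactly the content of Lemmas~\ref{l3.7}--\ref{l3.8}, which are the ``separately stated'' comparison principles you ask for. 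Your version is the same idea streamlined: you skip the intermediate $w_i$ and use eigen- (or super-eigen-) functions $\phi_i$ directly as $\bar u_i$, handling the $+c_2u_1$ term by the corrector $e^{\theta(t)}$. Both give a $\Lambda_*$ depending only on $(h_0,d_i,J_i,a_i,u_{i0})$.

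For the spreading half the paper goes through Lemma~\ref{l3.9}: it \emph{prescribes} a boundary path $b(t)$ with $b(0)=h_0$, $b(1)=H>\ell_*$, solves the single-species linear problem on the moving interval $(-b(t),b(t))$ over $t\in[0,1]$, reads off a uniform positive lower bound for the outward-flux integrals, and then chooses $\mu_i$ large enough so that this lower solution forces $h(1)-g(1)>2H$; finally, applying this separately to each component gives $\mu_i^0$ and one sets $\Lambda^0=\mu_1^0+\mu_2^0$. Your argument is more direct: assuming vanishing, (i) caps $h_\infty$ (and $-g_\infty$) a priori, which makes the outward kernel mass $\int_{h(t)}^\infty J_i(x-y)\,dy$ uniformly positive on $[-h_0,h_0]$; together with the $\mu$-independent lower bound \eqref{2.9} this yields $h(1)-g(1)\ge 2h_0+c_0(\mu_1+\mu_2)$ and a contradiction for $\mu_1+\mu_2>\Lambda^*:=(\ell_*-2h_0)/c_0$. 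Your approach is shorter and avoids constructing an auxiliary free boundary problem; the paper's Lemma~\ref{l3.9} is more general (it does not presuppose vanishing and gives a reusable comparison tool), at the cost of an extra construction.
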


It is easily seen that conclusions (i) and (ii) in Theorem \ref{th3.5} follow directly from the definition of $\ell_*$ and \eqref{3.5}. We prove (iii) by several lemmas.

\begin{lem}\label{l3.6}\, Under the assumptions of Theorem \ref{th3.5}, there exists a positive number $\Lambda_0$, depending only on $h_0$, $d_i$, $J_i$, $a_i$ and $u_{i0}$, $i=1, 2$, such that $h_\infty-g_\infty<\infty$ for any $\mu_1,\,\mu_2$ satisfying $0<\mu_1+\mu_2 <\Lambda_0$.
\end{lem}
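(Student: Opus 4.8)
The plan is to trap the two fronts $g,h$ inside a fixed bounded interval on which the generalized principal eigenvalues $\lambda_p(\mathcal L^{d_i}_{(\cdot,\cdot)}+a_i)$ are negative, and to run a continuity (bootstrap) argument: as long as the moving domain stays inside this interval, a comparison argument forces $u_1,u_2$ to decay exponentially, which keeps $h'$ and $-g'$ integrable in time and hence $h-g$ bounded, \emph{provided} $\mu_1+\mu_2$ is small.

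First I would choose the interval. Since $a_i<d_i$ and, by hypothesis, $2h_0<\ell_*=\min\{\ell_1,\ell_2\}$, using \cite[Proposition 3.4]{CDLL} (and translation invariance) together with the continuity of $\ell\mapsto\lambda_p(\mathcal L^{d_i}_{(0,\ell)}+a_i)$, I can fix $\ell_0\in(2h_0,\ell_*)$ with $\lambda_p\big(\mathcal L^{d_i}_{(-\ell_0/2,\,\ell_0/2)}+a_i\big)<0$ for $i=1,2$. By the very definition of $\lambda_p$, for each $i$ there are $\delta_i>0$ and $\phi_i\in C([-\ell_0/2,\ell_0/2])$, $\phi_i>0$ on the closed interval, with $(\mathcal L^{d_i}_{(-\ell_0/2,\ell_0/2)}+a_i)\phi_i\le-\delta_i\phi_i$. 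Set $P_i=\max\phi_i$, $p_i=\min\phi_i>0$, and $\widetilde M_i=\|u_{i0}\|_\infty/p_i$, so that $\widetilde M_i\phi_i\ge u_{i0}$ on $[-h_0,h_0]$.

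Next I would run the bootstrap. Put $T^*=\sup\{t>0:\,-\ell_0/2<g(s)\ \text{and}\ h(s)<\ell_0/2\ \text{for all }s\in[0,t]\}$; since $\pm h_0\in(-\ell_0/2,\ell_0/2)$, $T^*>0$. For $t<T^*$ the moving domain lies in $[-\ell_0/2,\ell_0/2]$, and in both \eqref{1.4} and \eqref{1.5} one has $f_1\le a_1u_1$ and $f_2\le(a_2+c_2u_1)u_2$ for $u_1,u_2\ge0$. I would then check that $\bar u_1(t,x)=\widetilde M_1e^{-\delta_1t}\phi_1(x)$ is a supersolution of the $u_1$-equation on $D^t_{g,h}$ — the point being that the extra mass of $\bar u_1$ over $[-\ell_0/2,\ell_0/2]\setminus[g(t),h(t)]$ only helps the inequality — so Lemma \ref{l2.2} gives $u_1\le\bar u_1$, in particular $\|u_1(t,\cdot)\|_\infty\le\widetilde M_1P_1e^{-\delta_1t}$ for $t<T^*$. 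Inserting this into the bound for $f_2$ and putting $\eta(t)=\int_0^t c_2\widetilde M_1P_1e^{-\delta_1s}\,\ds\le c_2\widetilde M_1P_1/\delta_1=:\bar\eta$, the function $\bar u_2(t,x)=\widetilde M_2e^{-\delta_2t+\eta(t)}\phi_2(x)$ is a supersolution of the $u_2$-equation (the coefficient $a_2+c_2\|u_1(t,\cdot)\|_\infty$ lies in $L^\infty$), so Lemma \ref{l2.2} yields $u_2\le\bar u_2$ and $\|u_2(t,\cdot)\|_\infty\le\widetilde M_2P_2e^{\bar\eta}e^{-\delta_2t}$ for $t<T^*$. (In the competition case \eqref{1.4} one may simply take $\eta\equiv0$.)

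Finally I would close the loop. For $t<T^*$, since $h(t)-g(t)<\ell_0$,
\[h'(t)\le\sum_{i=1}^2\mu_i\int_{g(t)}^{h(t)}\|u_i(t,\cdot)\|_\infty\,\dx\le\ell_0\big(\mu_1C_1e^{-\delta_1t}+\mu_2C_2e^{-\delta_2t}\big),\qquad C_1=\widetilde M_1P_1,\ C_2=\widetilde M_2P_2e^{\bar\eta},\]
and integrating, $h(t)-h_0\le C_0(\mu_1+\mu_2)$ with $C_0=\ell_0\max\{C_1/\delta_1,\,C_2/\delta_2\}$; likewise $-h_0-g(t)\le C_0(\mu_1+\mu_2)$. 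Taking $\Lambda_0=(\ell_0/2-h_0)/C_0>0$ — which depends only on $h_0,d_i,J_i,a_i,u_{i0}$ (in the predator–prey case also on $c_2$ through $\bar\eta$, but never on $\mu_1,\mu_2$) — if $0<\mu_1+\mu_2<\Lambda_0$ then $h(t)<\ell_0/2$ and $g(t)>-\ell_0/2$ on $[0,T^*)$; passing to the limit $t\uparrow T^*$ and using the continuity of $g,h$ forces $T^*=\infty$, and hence $h_\infty-g_\infty\le\ell_0<\infty$. I expect the main obstacles to be (i) producing such an $\ell_0$, which is exactly where $h_0<\ell_*/2$ enters, and (ii) the sign-indefinite term $c_2u_1u_2$ in $f_2$ for the predator–prey model, which blocks a direct two-component comparison and is handled by first establishing the exponential decay of $u_1$ and absorbing its cumulative effect into the bounded amplitude $e^{\eta(t)}$.
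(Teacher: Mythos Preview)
Your argument is correct and constitutes a genuinely different route from the paper's. The paper constructs an explicit supersolution quadruple $(w_1,w_2,\eta,r)$ on a fixed interval $[-h_1,h_1]$ and invokes the system comparison principles (Lemmas~\ref{l3.7} and~\ref{l3.8}) for the full free boundary problem; in the predator--prey case it handles the cross term $c_2w_1w_2$ by choosing a small factor $\sigma$ so that $\sigma c_2\varphi_1\le b_2\varphi_2$, i.e.\ by using the quadratic damping $-b_2w_2^2$ to absorb the coupling. You instead run a bootstrap on $T^*$ and use only the scalar maximum principle (Lemma~\ref{l2.2}) on the actual moving domain, first obtaining exponential decay of $u_1$ and then absorbing the destabilizing contribution $c_2u_1$ into the bounded amplitude $e^{\eta(t)}$. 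Your approach is somewhat more elementary in that it avoids the two-species free boundary comparison lemmas altogether, and the treatment of the predator--prey coupling is more flexible (it needs only that $\|u_1(t,\cdot)\|_\infty$ is integrable in $t$, not the specific structure $-b_2u_2^2$). The paper's approach, on the other hand, gives a completely explicit barrier $(w_1,w_2,\eta,r)$ in closed form. As you correctly flag, in the predator--prey case both your $\Lambda_0$ (through $\bar\eta$) and the paper's $\Lambda_0$ (through the constant $\sigma$ and hence $C$) depend on $b_2,c_2$, so the lemma's stated dependency list is slightly incomplete in either proof.
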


We need some comparison results to prove this lemma. The proof of the following Lemmas \ref{l3.7} and \ref{l3.8} can be carried out by a combination of the proofs of
\cite[Theorem 3.1]{CDLL}, \cite[Lemma 5.1]{GW} and \cite[Lemma 4.1]{Wjde14}. Since the adaptation is rather straightforward, we omit the details here.

\begin{lem}\label{l3.7}\, For $T\in(0,\infty)$,
suppose that $\bar h,\,\bar g\in C([0,T])$, $\bar u_1,\,\bar u_2\in  C\left(\{0\le t\le T,\,
\bar g(t)\le x\le\bar h(t)\}\right)$ and satisfy
 \bes\label{3.12}
 \hspace{1.2cm} \left\{\begin{aligned}
&\bar u_{1t}\dd\ge d_1\int_{\bar g(t)}^{\bar h(t)}J_1(x-y)\bar u_1(t,y)\dy
-d_1\bar u_1+\bar u_1(a_1-b_1\bar u_1), &&0<t\le T,~\bar g(t)<x<\bar h(t),\\
&\bar u_{2t}\dd\ge d_2\int_{\bar g(t)}^{\bar h(t)}J_2(x-y)\bar u_2(t,y)\dy-d_2\bar u_2
+\bar u_2(a_2-b_2\bar u_2), &&0<t\le T,~\bar g(t)<x<\bar h(t),\\
 &\bar u_i(t,\bar g(t))\ge 0,\ \
 \bar u_i(t,\bar h(t))\geq 0,\ \ i=1,\,2, &&0<t\le T,\\
 &\dd\bar h'(t)\ge\sum_{i=1}^2\mu_i\int_{\bar g(t)}^{\bar h(t)}\!\int_{\bar h(t)}^{\infty}\!
 J_i(x-y)\bar u_i(t,x)\dy\dx,&& 0\le t\le T,\\
 &\dd\bar g'(t)\le-\sum_{i=1}^2\mu_i\int_{\bar g(t)}^{\bar h(t)}\!\int_{-\infty}^{\bar g(t)}\!
 J_i(x-y)\bar u_i(t,x)\dy\dx, && 0\le t\le T,\\
 &\bar u_i(0,x)\ge u_{i0}(x),\ \ i=1,\,2,~\ \bar h(0)\ge h_0,~\ \bar g(0)\le-h_0, \ && |x|\le h_0.
 \end{aligned}\right.\ees
Let $(u_1, u_2, g, h)$ be the unique solution of \eqref{1.2} with $(f_1, f_2)$ satisfying
\eqref{1.4}. Then
  \[u_1\leq\bar u_1,~ \ u_2\leq\bar u_2,~ \ g\geq\bar g,~\ h\leq\bar h~\ \ \mbox{in} \ \ D^T_{g, h}.\]
\end{lem}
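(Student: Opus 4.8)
The plan is to obtain the ordering $g\ge\bar g$, $h\le\bar h$, $u_i\le\bar u_i$ from a \emph{first touching time} argument, in which the interior comparison is supplied by the Maximum Principle (Lemma \ref{l2.2}) and the boundary comparison by the free boundary conditions, after a preliminary perturbation that turns the non-strict inequalities in \eqref{3.12} into strict ones. Two structural observations are used throughout. First, since $f_1(t,x,u_1,u_2)=u_1(a_1-b_1u_1-c_1u_2)\le u_1(a_1-b_1u_1)$, and similarly for $f_2$, the decoupled logistic inequalities in \eqref{3.12} imply that $(\bar u_1,\bar u_2,\bar g,\bar h)$ is also a supersolution of the full coupled system \eqref{1.2} with $(f_1,f_2)$ as in \eqref{1.4}, so the competition coupling is harmless. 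Second, writing $\bar u_i(a_i-b_i\bar u_i)=(a_i-b_i\bar u_i)\bar u_i$ with $a_i-b_i\bar u_i\in L^\infty$ ($\bar u_i$ being continuous on a compact set, hence bounded) and applying Lemma \ref{l2.2} to $\bar u_i$ itself, we get $\bar u_i\ge0$ throughout its domain, a fact used repeatedly below.

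For the perturbation, fix a small $\ep>0$ and let $(u_1^\ep,u_2^\ep,g^\ep,h^\ep)$ be the unique solution of \eqref{1.2} with each $\mu_i$ replaced by $(1-\ep)\mu_i$, the initial data unchanged. I claim $(\bar u_1,\bar u_2,\bar g,\bar h)$ is then a \emph{strict} supersolution relative to this perturbed problem, in the sense that whenever $t$ is such that $h^\ep(t)=\bar h(t)$, $g^\ep(t)\ge\bar g(t)$ and $u_i^\ep(t,\cdot)\le\bar u_i(t,\cdot)$ for $i=1,2$, one has
\begin{align*}
\bar h'(t)&\ \ge\ \sum_{i=1}^2\mu_i\int_{\bar g(t)}^{\bar h(t)}\int_{\bar h(t)}^{\infty}J_i(x-y)\bar u_i(t,x)\dy\dx\\
&\ \ge\ \sum_{i=1}^2\mu_i\int_{g^\ep(t)}^{h^\ep(t)}\int_{h^\ep(t)}^{\infty}J_i(x-y)u_i^\ep(t,x)\dy\dx\ =\ \frac{(h^\ep)'(t)}{1-\ep}\ >\ (h^\ep)'(t),
\end{align*}
the first inequality being \eqref{3.12}, the second holding because $h^\ep(t)=\bar h(t)$, $g^\ep(t)\ge\bar g(t)$ and $\bar u_i\ge u_i^\ep\ge0$ on $(g^\ep(t),h^\ep(t))$, and the last because $(h^\ep)'(t)>0$ (as $u_i^\ep>0$ in the interior and $J_i(0)>0$, so $J_i>0$ near the origin). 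The same estimate at $t=0$, using $\bar u_i(0,\cdot)\ge u_{i0}$ and $u_{i0}>0$ in $(-h_0,h_0)$, gives $(h^\ep)'(0)<\bar h'(0)$ in the degenerate case $\bar h(0)=h_0$; the left boundary is treated symmetrically.

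Now run the touching-time argument for the perturbed problem. Set $t^*:=\sup\{\tau\in(0,T]:g^\ep(t)>\bar g(t)\text{ and }h^\ep(t)<\bar h(t)\text{ for all }t\in(0,\tau]\}$, which is positive by the strict inequalities just established at $t=0$. On $[0,t^*]$ we have $(g^\ep(t),h^\ep(t))\subseteq(\bar g(t),\bar h(t))$, so $w_1:=\bar u_1-u_1^\ep$ satisfies in $D^{t^*}_{g^\ep,h^\ep}$
\[w_{1t}\ge d_1\int_{g^\ep(t)}^{h^\ep(t)}J_1(x-y)w_1(t,y)\dy-d_1w_1+\big(a_1-b_1(\bar u_1+u_1^\ep)\big)w_1,\]
because $\int_{\bar g}^{\bar h}J_1\bar u_1\ge\int_{g^\ep}^{h^\ep}J_1\bar u_1$ (valid since $\bar u_1\ge0$) and $\bar u_1(a_1-b_1\bar u_1)-u_1^\ep(a_1-b_1u_1^\ep-c_1u_2^\ep)=\big(a_1-b_1(\bar u_1+u_1^\ep)\big)w_1+c_1u_1^\ep u_2^\ep\ge\big(a_1-b_1(\bar u_1+u_1^\ep)\big)w_1$; the coefficient lies in $L^\infty$. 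Since $w_1|_{t=0}\ge0$ and $w_1=\bar u_1\ge0$ on $x=g^\ep(t),h^\ep(t)$, Lemma \ref{l2.2} gives $w_1\ge0$; likewise $w_2\ge0$, hence $u_i^\ep\le\bar u_i$ on $D^{t^*}_{g^\ep,h^\ep}$. If $t^*\le T$, then at $t^*$ one of the boundaries touches, say $h^\ep(t^*)=\bar h(t^*)$, which forces $(h^\ep)'(t^*)\ge\bar h'(t^*)$; but the strict supersolution chain above gives $(h^\ep)'(t^*)<\bar h'(t^*)$, a contradiction. Hence $t^*>T$, so $g^\ep\ge\bar g$, $h^\ep\le\bar h$ on $(0,T]$ and $u_i^\ep\le\bar u_i$ on $D^T_{g^\ep,h^\ep}$. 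Letting $\ep\to0$ and using the continuous dependence of the solution of \eqref{1.2} on $\mu_i$ (which follows from the fixed-point construction of Section 2) we conclude $u_i\le\bar u_i$, $g\ge\bar g$, $h\le\bar h$ in $D^T_{g,h}$.

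The step I expect to demand the most care is arranging the perturbation so that the supersolution inequalities become genuinely strict at every point where they are invoked — in particular at $t=0$ in the degenerate situation $\bar h(0)=h_0$, $\bar g(0)=-h_0$ — together with the continuous dependence needed to pass to the limit $\ep\to0$; by contrast, the interior comparison via Lemma \ref{l2.2}, exploiting the sign of the competition terms, and the monotone dependence of the flux integrals on the boundaries, are routine. The predator-prey variant of \eqref{1.5} follows the same scheme, the main difference being that the $\bar u_2$-equation there must retain a term of the form $c_2\bar u_1$ because of the cooperative sign, so the supersolution system is triangular rather than fully decoupled.
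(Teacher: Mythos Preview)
Your proposal is correct and follows exactly the standard approach the paper has in mind: the paper omits the proof of Lemma \ref{l3.7} and refers to \cite[Theorem 3.1]{CDLL}, \cite[Lemma 5.1]{GW} and \cite[Lemma 4.1]{Wjde14}, whose arguments are precisely the perturbation $\mu_i\mapsto(1-\ep)\mu_i$, first-touching-time contradiction via the strict flux inequality, interior comparison by the maximum principle, and passage to the limit $\ep\to0$ that you outline. One cosmetic slip: since $t^*$ is a supremum over $(0,T]$ you cannot conclude $t^*>T$; the contradiction you derive shows that no touching occurs when $t^*<T$, hence $t^*=T$, which is what you need.
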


\begin{lem}\label{l3.8}\, In Lemma $\ref{l3.7}$, if we replace
the second inequality in \eqref{3.12} by
 \[\bar u_{2t}\dd\ge d_2\int_{\bar g(t)}^{\bar h(t)}J_2(x-y)\bar u_2(t,y)\dy-d_2\bar u_2
+\bar u_2(a_2-b_2\bar u_2+c_2\bar u_1), \ \ 0<t\le T,~\bar g(t)<x<\bar h(t),\]
and let $(u_1, u_2, g, h)$ be the unique solution of \eqref{1.2} with $(f_1, f_2)$ satisfying
\eqref{1.5}, then the conclusion still holds true.
\end{lem}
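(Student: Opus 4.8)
The plan is to run the same comparison scheme that proves Lemma \ref{l3.7} --- itself a synthesis of \cite[Theorem 3.1]{CDLL}, \cite[Lemma 5.1]{GW} and \cite[Lemma 4.1]{Wjde14} --- and to isolate the single place where the predator--prey structure \eqref{1.5} makes a difference. First I would introduce a small perturbation of the supersolution: replace $(\bar g,\bar h)$ by a slightly wider pair $(\bar g_\ep,\bar h_\ep)$ and enlarge $(\bar u_1,\bar u_2)$ and $(u_{10},u_{20})$ slightly, so that every inequality in the modified \eqref{3.12} becomes strict; the original conclusion then follows by letting $\ep\to0$. With this perturbed supersolution in hand I would set
 \[t^*:=\sup\Big\{t\in[0,T]:\ g\ge\bar g_\ep,\ h\le\bar h_\ep \text{ on }[0,t],\ \ u_1\le\bar u_1,\ u_2\le\bar u_2 \text{ in }D^t_{g,h}\Big\}\]
and aim to show $t^*=T$, arguing by contradiction.

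If $t^*<T$, then by continuity at $t=t^*$ at least one of the four defining inequalities becomes an equality while all still hold on $[0,t^*]$, and I would rule out each case. An interior touch $u_1(t^*,x_0)=\bar u_1(t^*,x_0)$ is excluded exactly as in the competition case: since $f_1=u_1(a_1-b_1u_1-c_1u_2)\le u_1(a_1-b_1u_1)$ for every $u_2\ge0$, the function $\bar u_1$ is a supersolution of the $u_1$--equation on the non-cylindrical domain $D^{t^*}_{g,h}\subset D^{t^*}_{\bar g_\ep,\bar h_\ep}$, \emph{independently of $u_2$}; comparing boundary and initial values ($u_1=0$ on $x=g(t),h(t)$ and $\bar u_1(0,\cdot)>u_{10}$) and applying the maximum principle Lemma \ref{l2.2} contradicts the touch. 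A free-boundary touch, say $h(t^*)=\bar h_\ep(t^*)$ (the case $g(t^*)=\bar g_\ep(t^*)$ being symmetric), is excluded by bounding $h'(t^*)$ using $g(t^*)\ge\bar g_\ep(t^*)$, the nesting $[g(t),h(t)]\subset[\bar g_\ep(t),\bar h_\ep(t)]$ and $0\le u_i\le\bar u_i$: this forces $h'(t^*)$ to lie strictly below $\bar h_\ep'(t^*)$, contradicting that $h-\bar h_\ep\le0$ on $[0,t^*]$ vanishes at $t^*$.

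The only genuinely new point --- and the step I expect to require the most care --- is the interior touch for the $u_2$--equation, because in \eqref{1.5} the term $f_2=u_2(a_2-b_2u_2+c_2u_1)$ is \emph{increasing} in $u_1$, so (unlike in the competition case) it cannot simply be discarded. The remedy is that, by the definition of $t^*$, the bound $u_1\le\bar u_1$ is already available on all of $D^{t^*}_{g,h}$, and feeding it into the growth term gives
 \[f_2(t,x,u_1,u_2)=u_2(a_2-b_2u_2+c_2u_1)\le u_2(a_2-b_2u_2+c_2\bar u_1),\]
which is precisely the growth bound appearing in the modified first inequality for $\bar u_2$ in the statement of Lemma \ref{l3.8}. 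Hence $\bar u_2$ is a genuine supersolution of the $u_2$--equation on $D^{t^*}_{g,h}$ (the nonlocal integral compares correctly because $0\le u_2\le\bar u_2$ and $[g(t),h(t)]\subset[\bar g_\ep(t),\bar h_\ep(t)]$), and Lemma \ref{l2.2} together with the strictness of the perturbation again forbids the touch. There is no circularity: at $t^*$ the comparison for $u_1$ holds on the whole of $[0,t^*]$ and is therefore legitimately usable when handling $u_2$. This yields $t^*=T$; letting $\ep\to0$ then gives $u_1\le\bar u_1$, $u_2\le\bar u_2$, $g\ge\bar g$, $h\le\bar h$ in $D^T_{g,h}$, which is the assertion of Lemma \ref{l3.8}. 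Apart from this one modification --- replacing "discard $f_2$'s coupling term'' by "dominate it via $u_1\le\bar u_1$'' --- every other ingredient (non-cylindrical domains, zero-extension of the $u_i$, the nonlocal maximum principle) is identical to Lemma \ref{l3.7} and to \cite{CDLL,GW,Wjde14}, which is why we only sketch the argument.
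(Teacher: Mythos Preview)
Your proposal is correct and follows exactly the approach the paper intends: the paper itself omits the proof, stating only that it ``can be carried out by a combination of the proofs of \cite[Theorem 3.1]{CDLL}, \cite[Lemma 5.1]{GW} and \cite[Lemma 4.1]{Wjde14}'' with a ``rather straightforward'' adaptation. Your sketch correctly identifies that single adaptation --- using the already-available bound $u_1\le\bar u_1$ on $[0,t^*]$ to dominate the coupling term $+c_2u_1$ in $f_2$ --- which is precisely the point of the modified inequality in the statement.
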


\begin{proof}[Proof of Lemma \ref{l3.6}]\, The idea of this proof comes from \cite[Theorem 3.12]{CDLL}, \cite[Lemma 5.2]{Wjde14} and \cite[Lemma 4.4]{WZhang16}.

Since $\lambda_p\big(\mathcal{L}^{d_i}_{(-h_0,h_0)}+a_i\big)<0$, $i=1,2$, we can choose $h_1>h_0$
such that
 \[\lambda_i:=\lambda_p\big(\mathcal{L}^{d_i}_{(-h_1,h_1)}+a_i\big)<0, \ \ i=1,2.\]

{\it Case 1: The competition case}. Suppose that $(f_1, f_2)$ satisfies \eqref{1.4}.
Let $w_i(t,x)$ be the unique solution of
 \begin{equation}\label{3.13}
 \left\{\begin{aligned}
&w_{it}=d_i\int_{-h_1}^{h_1}J_i(x-y)w_i(t,y)\dy-d_iw_i+a_iw_i,
  & &t>0,~|x|\le h_1,\\
 &w_i(0,x)=u_{i0}(x), & &  |x|\le h_0, \\
 & w_i(0,x)=0, & &  |x|>h_0.
 \end{aligned}\right.\end{equation}
Let $\varphi_i>0$ be the corresponding
normalized eigenfunction of $\lambda_i$, namely $\|\varphi_i\|_\infty=1$ and
 \[\left(\mathcal{L}^{d_i}_{(-h_1, h_1)}+a_i\right)[\,\varphi_i](x)=\lambda_i\varphi_i(x), \ \ \forall\ |x|\le h_1.\]
For $C>0$ and $z_i(t,x)=C e^{\lambda_i t/2}\varphi_i(x)$, it is easy to check that
 \begin{align*}
& d_i\int_{-h_1}^{h_1}J_i(x-y)z_i(t,y)\dy-d_i z_i+a_i z_i-z_{it}\\
 =&\; Ce^{\lambda_i t/2 }\left(d_i\int_{-h_1}^{h_1}J_i(x-y)\varphi_i(y)\dy-d_i\varphi_i+a_i\varphi_i
 -\frac{\lambda_i} 2\varphi_i\right)\\
 =&\;\frac{\lambda_i} 2C e^{\lambda_i t/2}\varphi_i(x)<0, \ \ \forall~t>0, ~|x|\le h_1, \ \ i=1,\,2.
  \end{align*}
Choose $C>0$ large such that $C\varphi_i>u_{i0}$ on $[-h_1, h_1]$. Then we can apply
\cite[Lemma 3.3]{CDLL} to $w_i-z_i$ to conclude that
 \begin{equation}\label{3.14}
 w_i(t,x)\le z_i(t,x)=C e^{\lambda_i t/2}\varphi_i(x)\leq C e^{\lambda_i t/2}, \ \
  \forall~t>0, ~|x|\le h_1.
\end{equation}

Set
 \[\lambda=\max\{\lambda_1,\,\lambda_2\}, \ \
 r(t)=h_0+2(\mu_1+\mu_2)Ch_1\int_0^t e^{\lambda s/2}{\rm d}s, \ \ \eta(t)=-r(t), \ \ t\ge0.\]
Then $\lambda<0$.

We claim that $(w_1, w_2, \eta, r)$ is an upper solution of (\ref{1.2})
with $(f_1, f_2)$ satisfying \eqref{1.4}. Firstly, we compute for $t>0$,
 $$r(t)=h_0-2(\mu_1+\mu_2)Ch_1\frac{2}{\lambda}\left(1-e^{\lambda t/2}\right)<h_0-2(\mu_1 +\mu_2)Ch_1\frac{2}{\lambda}\leq h_1$$
provided that
  $$0<\mu_1+\mu_2\leq \Lambda_0:=\frac{-\lambda(h_1-h_0)}{4Ch_1}.$$
Similarly, for such $\mu_1$ and $\mu_2$, we have $\eta(t)>-h_1 $ for any $t>0$. Thus \eqref{3.13} gives
 \[w_{it}\ge d_i\int_{\eta(t)}^{r(t)}J_i(x-y)w_i(t,y)\dy-d_iw_i+w_i(a_i-b_iw_i),
 \ \ t>0,~x\in[\eta(t),\,r(t)].\]
Secondly, due to \eqref{3.14}, it is easy to check that
 \[\int_{\eta(t)}^{r(t)}\int_{r(t)}^{\infty}J_i(x-y) w_i(t,x)\dy\dx
  \le 2Ch_1e^{\lambda_it/2}\le 2Ch_1e^{\lambda t/2}.\]
Thus
 $$r'(t)=2(\mu_1+\mu_2)Ch_1e^{\lambda t/2}\ge
 \sum_{i=1}^2\mu_i\int_{\eta(t)}^{r(t)}\int_{r(t)}^{\infty}J_i(x-y)w_i(t,x)\dy\dx.$$
Similarly, one has
  \bes
  \eta'(t)\le-\sum_{i=1}^2\mu_i\int_{\eta(t)}^{r(t)}\int^{\eta(t)}_{-\infty}
  J_i(x-y)w_i(t,x)\dy\dx.
  \lbl{3.15}\ees

The above arguments show that $(w_1, w_2, \eta, r)$ is an upper solution of (\ref{1.2}) with $(f_1, f_2)$ satisfying \eqref{1.4}. By Lemma \ref{l3.7} we get
 \[u_1(t,x)\leq w_1(t,x),~ \ u_2(t,x)\leq w_2(t,x),~ \ g(t)\geq\eta(t),~\ h(t)\leq r(t)\]
for all $t\geq 0$, $g(t)\le x\le h(t)$. Therefore
  $$h_\infty-g_\infty\le\lim_{t\to\infty}\left[r(t)-\eta(t)\right]
  \leq 2h_1<\infty.$$

{\it Case 2: The prey-predator case}. Suppose that $(f_1, f_2)$ satisfies \eqref{1.5}.
Inspired by \cite[Lemma 5.2]{Wjde14}, let $w_1(t,x)$ and $w_2(t,x)$ be the unique solution of
 \bess
 \left\{\begin{aligned}
&w_{1t}=d_1\int_{-h_1}^{h_1}J_1(x-y)w_1(t,y)\dy-d_1w_1+a_1w_1,
  & &t>0,~|x|\le h_1,\\
 &w_1(0,x)=u_{10}(x), & &  |x|\le h_0, \\
 & w_1(0,x)=0, & &  |x|>h_0
 \end{aligned}\right.\eess
and
 \bess
 \left\{\begin{aligned}
&w_{2t}=d_2\int_{-h_1}^{h_1}J_2(x-y)w_2(t,y)\dy-d_2w_2+w_2(a_2-b_2w_2+c_2w_1),
  & &t>0,~|x|\le h_1,\\
 &w_2(0,x)=u_{20}(x), & &  |x|\le h_0, \\
 & w_2(0,x)=0, & &  |x|>h_0,
 \end{aligned}\right.\eess
respectively. Take $\lambda_i$ and $\varphi_i$ as above. Then there exists $0<\sigma\le 1$ such that
  \bes
   \sigma c_2\varphi_1(x)\le b_2\varphi_2(x), \ \ \ \forall \ |x|\le h_1.
  \lbl{3.16}\ees
Choose $C>0$ large such that
 \[\sigma C\varphi_1(x)>u_{10}(x), \ \ \ C\varphi_2(x)>u_{20}(x), \ \ \ \forall \ |x|\le h_1.\]
Set $z_1(t,x)=\sigma C e^{\lambda_1 t/2}\varphi_1(x)$, $\lambda=\max\{\lambda_1,\,\lambda_2\}$ and
$z_2(t,x)=C e^{\lambda t/2}\varphi_2(x)$. Similar to the above,
 \begin{align*}
&\; d_1\int_{-h_1}^{h_1}J_1(x-y)z_1(t,y)\dy-d_1 z_1+a_1z_1-z_{1t}\\
 =&\;\sigma Ce^{\lambda_1 t/2 }\left(d_1\int_{-h_1}^{h_1}J_1(x-y)\varphi_1(y)\dy-d_1\varphi_1+a_1\varphi_1
 -\frac{\lambda_1} 2\varphi_1\right)\\
 =&\;\frac{\lambda_1} 2\sigma C e^{\lambda_1 t/2}\varphi_1(x)<0.
  \end{align*}

Now we consider $z_2(t,x)$. Using $\lambda=\max\{\lambda_1,\,\lambda_2\}<0$
and \eqref{3.16}, we obtain
\begin{align*}
& d_2\int_{-h_1}^{h_1}J_2(x-y)z_2(t,y)\dy-d_2 z_2+z_2(a_2-b_2z_2+c_2z_1)-z_{2t}\\
 =&\;Ce^{\lambda t/2 }\left(d_2\int_{-h_1}^{h_1}J_2(x-y)\varphi_2(y)\dy-d_2\varphi_2+a_2\varphi_2
 -\frac{\lambda} 2\varphi_2\right)\\
 &\;+C^2 e^{\lambda t/2}\varphi_2\left(\sigma c_2e^{\lambda_1 t/2}\varphi_1-b_2
 e^{\lambda t/2}\varphi_2\right)\\
 =&\;Ce^{\lambda t/2}\left(\lm_2\varphi_2-\frac{\lambda} 2\varphi_2\right)+C^2 e^{\lambda t}\varphi_2\left(\sigma c_2e^{(\lambda_1-\lm)t/2}\varphi_1-b_2\varphi_2\right)\\
 \le&\;\frac{\lambda} 2C e^{\lambda t/2}\varphi_2(x)<0.
  \end{align*}
Similar to the above, applying \cite[Lemma 3.3]{CDLL} to $w_i-z_i$ we have
 \bes\left\{\begin{aligned}\label{3.17}
 &w_1(t,x)\le z_1(t,x)=\sigma C e^{\lambda_1 t/2}\varphi_1(x)\leq \sigma C e^{\lambda t/2}, \ \
  &&\forall~t>0, ~|x|\le h_1,\\
 &w_2(t,x)\le z_2(t,x)=C e^{\lambda t/2}\varphi_2(x)\leq C e^{\lambda t/2}, \ \
  &&\forall~t>0, ~|x|\le h_1.
  \end{aligned}\right.\ees
Set
 \[ r(t)=h_0+2(\sigma\mu_1+\mu_2)Ch_1\int_0^t e^{\lambda s/2}{\rm d}s, \ \ \eta(t)=-r(t), \ \ t\ge0. \]
 For $t>0$, we have
 $$r(t)=h_0-2(\sigma\mu_1+\mu_2)Ch_1\frac{2}{\lambda}\left(1-e^{\lambda t/2}\right)
 <h_0-2(\sigma\mu_1+\mu_2)Ch_1\frac{2}{\lambda}\leq h_1$$
provided that
  $$0<\sigma\mu_1+\mu_2\leq \Lambda_0:=\frac{-\lambda(h_1-h_0)}{4Ch_1}.$$
Similarly, for such $\mu_1$ and $\mu_2$, $\eta(t)>-h_1 $ for any $t>0$. Thus we have
\begin{align*}
 w_{1t}\ge&\;\dd d_1\int_{\eta(t)}^{r(t)}J_1(x-y)w_1(t,y)\dy-d_1w_1+a_1w_1,\\
 w_{2t}\ge&\;d_2\dd\int_{-h_1}^{h_1}J_2(x-y)w_2(t,y)\dy-d_2w_2+w_2(a_2-b_2w_2+c_2w_1)
 \end{align*}
for $t>0$ and $x\in[\eta(t),\,r(t)]$. On the other hand, due to \eqref{3.17}, it is easy to check that
 \begin{align*}
 \int_{\eta(t)}^{r(t)}\int_{r(t)}^{\infty}J_1(x-y)w_1(t,x)\dy\dx
  \le&\;2\sigma Ch_1e^{\lambda t/2}, \\[1mm]
  \int_{\eta(t)}^{r(t)}\int_{r(t)}^{\infty}J_2(x-y)w_2(t,x)\dy\dx
  \le&\; 2 Ch_1e^{\lambda t/2}.\end{align*}
Thus
 $$r'(t)=2(\sigma\mu_1+\mu_2)Ch_1e^{\lambda t/2}\ge
 \sum_{i=1}^2\mu_i\int_{\eta(t)}^{r(t)}\int_{r(t)}^{\infty}J_i(x-y)w_i(t,x)
 \dy\dx.$$
Similarly, $\eta(t)$ satisfies \eqref{3.15}. We may now apply Lemma \ref{l3.8} to conclude that, when $\sigma\mu_1+\mu_2\leq \Lambda_0$,
  $$h_\infty-g_\infty\le\lim_{t\to\infty}\left[r(t)-\eta(t)\right]
  \leq 2h_1<\infty.$$
As $\mu_1+\mu_2\leq \Lambda_0$ implies $\sigma\mu_1+\mu_2\leq \Lambda_0$, the desired result
is proved.
\end{proof}

To complete the proof of Theorem \ref{th3.5}, it remains to show that if $\mu_1+\mu_2$ is large, then $h_\infty-g_\infty=\infty$. We need the following lemma.

\begin{lem}\lbl{l3.9}\, Let {\bf (J)} hold for the kernel function $J$, and $C>0$ be a constant.
For any given constants $s_0, H>0$, and any function $w_0\in C([0,s_0])$
satisfying $w_0(\pm s_0)=0$ and $w_0>0$ in $(-s_0,s_0)$,
there exists $\mu^0>0$, depending on $J(x)$, $d$, $C$, $w_0(x)$ and $s_0$,
such that if $\mu \geq\mu^0$ and $(w, s, c)$ satisfies
 \bess\left\{\begin{aligned}
 &w_t\ge d\dd\int_{c(t)}^{s(t)}J(x-y)w(t,y)\dy-d w-Cw, & &t>0,~c(t)<x<s(t),\\
 &w(t,c(t))=w(t,s(t))=0, & &t>0,\\
 &s'(t)\ge\dd\mu \int_{c(t)}^{s(t)}\int_{s(t)}^{\infty}J(x-y)w(t,x)\dy\dx,\ & &t>0,\\
 &c'(t)\le-\dd\mu \int_{c(t)}^{s(t)}\int_{-\infty}^{c(t)}J(x-y)w(t,x)\dy\dx, & &t>0,\\
 &w(0,x)=w_0(x),~s(0)=-c(0)=s_0, & &|x|\le s_0,
 \end{aligned}\right.
 \eess
then $\dd\liminf_{t\to\infty}[s(t)-c(t)]>H$.
\end{lem}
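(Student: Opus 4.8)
The plan is to reduce the statement to the construction of an explicit lower solution. Since the hypotheses force $s'(t)\ge 0$ and $c'(t)\le 0$, the width $s(t)-c(t)$ is nondecreasing, so it suffices to exhibit a single time $t_1>0$ with $s(t_1)-c(t_1)>H$. To this end I would view $(w,s,c)$ as a supersolution of the \emph{linear} free boundary problem obtained by replacing the reaction term with $-Cw$, and compare it with a lower solution $(\underline w,\underline c,\underline s)$ to be constructed; the comparison principle for such problems (obtained exactly as in the proof of Lemma~\ref{l3.7}, cf.\ \cite{CDLL}) then yields $s(t)-c(t)\ge\underline s(t)-\underline c(t)$ for all $t\ge 0$.

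For the lower solution I would fix $\phi\in C^1(\mathbb R)$ with $\mathrm{supp}\,\phi=[-s_0/2,s_0/2]$, $\phi>0$ on $(-s_0/2,s_0/2)$ and $\phi'(\pm s_0/2)=0$ (e.g.\ $\phi(\eta)=\cos^2(\pi\eta/s_0)$ there), and pick $\alpha_0>0$ small so that $\alpha_0\phi\le w_0$ on $[-s_0,s_0]$. Put
\[
\underline c(t)\equiv -s_0,\qquad \underline s(t)=\xi(t)+\tfrac{s_0}{2},\qquad \underline w(t,x)=\alpha_0 e^{-(d+C)t}\,\phi\bigl(x-\xi(t)\bigr),
\]
where $\xi\in C^1([0,\infty))$, $\xi(0)=0$, is to be fixed by a bound on $\xi'$. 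A direct computation (the factor $e^{-(d+C)t}$ cancels the linear part) shows $\underline w$ satisfies the subsolution inequality for the equation exactly when $-\phi'(\eta)\,\xi'(t)\le d\,(J*\phi)(\eta)$ for all $\eta$; since $(J*\phi)(\eta)\ge\nu_0:=\min_{[-s_0/2,s_0/2]}J*\phi>0$ on $\mathrm{supp}\,\phi$, this holds once $\xi'(t)\le V_0:=d\nu_0/\|\phi'\|_\infty$. For the right free boundary condition, pick $\ep_1>0$ small (depending on $J$) with $\gamma_0:=\int_{-\infty}^{-\ep_1}J>0$; since $\underline w(t,\cdot)$ carries mass at least $\alpha_0 e^{-(d+C)t}m_\phi$, $m_\phi:=\int_{s_0/2-\ep_1}^{s_0/2}\phi>0$, within distance $\ep_1$ of $\underline s(t)$, where $\int_{\underline s(t)}^{\infty}J(x-y)\dy\ge\gamma_0$, one obtains
\[
\mu\int_{\underline c(t)}^{\underline s(t)}\int_{\underline s(t)}^{\infty}J(x-y)\,\underline w(t,x)\,\dy\dx\ \ge\ \mu\,\alpha_0\gamma_0 m_\phi\,e^{-(d+C)t},
\]
so the right free boundary inequality holds provided $\xi'(t)\le\mu\,\alpha_0\gamma_0 m_\phi\,e^{-(d+C)t}$; the left one is automatic since $\underline c'\equiv0$. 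The remaining requirements ($\underline w$ vanishes at $\underline c(t),\underline s(t)$ because $\phi$ vanishes at $\pm s_0/2$ and outside $[-s_0/2,s_0/2]$; $\underline s(0)=s_0/2\le s_0$; $\underline w(0,\cdot)=\alpha_0\phi\le w_0$) hold by construction, so $(\underline w,\underline c,\underline s)$ is a genuine lower solution.

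I would then take $\xi(t):=\int_0^t\min\bigl\{V_0,\ \mu\,\alpha_0\gamma_0 m_\phi\,e^{-(d+C)\tau}\bigr\}\,d\tau$, which is $C^1$ and meets both bounds on $\xi'$. For large $\mu$ the crossover time $T_\mu=\frac{1}{d+C}\ln\frac{\mu\,\alpha_0\gamma_0 m_\phi}{V_0}$ is large, and $\xi'\equiv V_0$ on $[0,T_\mu]$, so $\xi(T_\mu)=V_0 T_\mu\to\infty$. Choosing $\mu^0$ (depending on $J,d,C,w_0,s_0$ and $H$) so that $T_{\mu^0}\ge (H+1)/V_0$, we get, for every $\mu\ge\mu^0$, $\xi\bigl((H+1)/V_0\bigr)=H+1$; hence at $t_1=(H+1)/V_0$,
\[
s(t_1)-c(t_1)\ \ge\ \underline s(t_1)-\underline c(t_1)\ =\ \xi(t_1)+\tfrac{3s_0}{2}\ >\ H,
\]
and, $s-c$ being nondecreasing, $\liminf_{t\to\infty}[s(t)-c(t)]>H$.

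The main obstacle is the subsolution check near the right edge of $\mathrm{supp}\,\phi$: a rigidly translating bump cannot be a subsolution (at an interior maximum the equation would force $0$ to be bounded above by a negative number), which is why both the amplitude decay $e^{-(d+C)t}$ and the speed cap $\xi'\le V_0$ are needed; that cap is in turn the reason one must take $\mu$ (equivalently, the length $T_\mu$ of the constant-speed phase) large to reach a prescribed width $H$. What remains is bookkeeping: checking that $\phi,\alpha_0,\ep_1,V_0,\gamma_0,m_\phi$ depend only on $J,d,C,w_0,s_0$, and that the comparison principle indeed applies to the linearized free boundary problem.
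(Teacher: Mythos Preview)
Your argument is correct and complete, but it differs from the paper's in how the lower barrier is built. Both proofs reduce the question to showing $s(t_1)-c(t_1)>H$ at some finite time (using monotonicity of $s-c$, which follows from $w>0$ via Lemma~\ref{l2.2}) and do so by producing a subsolution of the linearized free boundary problem and invoking the comparison principle. The difference is in the construction.

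The paper simply \emph{prescribes} the boundaries: it picks any $b\in C^1([0,1])$ with $b(0)=s_0$, $b(1)=H$, sets $a=-b$, and solves the linear equation $z_t=d\int_a^b J(x-y)z\,dy-dz-Cz$ on the given moving domain $\{a(t)<x<b(t),\ 0\le t\le 1\}$ with initial data $w_0$. Positivity of $z$ and compactness of $[0,1]$ yield a uniform lower bound $\sigma>0$ on the outward-flux integrals; since $a',b'$ are bounded on $[0,1]$, one takes $\mu^0$ large enough that $|a'(t)|,|b'(t)|\le\mu\sigma$ for all $\mu\ge\mu^0$, and comparison gives $s(1)-c(1)\ge 2H$ directly.

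Your route is more constructive: you write down an explicit translating--decaying bump, derive the speed cap $V_0$ from the PDE inequality and the bound $\mu\,\alpha_0\gamma_0 m_\phi e^{-(d+C)t}$ from the free boundary inequality, and then take $\mu$ large so that the constant-speed phase $[0,T_\mu]$ is long enough to push the front past $H$. This buys an explicit dependence of $\mu^0$ on the data and avoids invoking well-posedness of the auxiliary fixed-domain problem, at the cost of more computation (choice of $\phi$, the pointwise check $-\phi'(\eta)\xi'\le d(J*\phi)(\eta)$, the verification that $\mathrm{supp}\,\phi(\cdot-\xi(t))\subset[\underline c(t),\underline s(t)]$, etc.). The paper's approach is shorter precisely because it outsources all of this to the single qualitative fact that the solution of a linear problem on a prescribed domain with positive initial data stays positive on a compact time interval.
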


\begin{proof}\, We adapt the approach of  \cite[Lemma 3.2]{WZjdde14}.

Firstly, the comparison principle gives
 \[ w(t,x)>0, \ \ \ \forall \ t>0, \ c(t)<x<s(t).\]
 It then follows that $s'(t)>0, \ \ c'(t)<0$ for $t>0$.

Take a function $b(t)\in C^1([0,1])$ satisfying $b(t)>0$ in $[0,1]$,
$b(0)=s_0$ and $b(1)=H$, and set $a(t)=-b(t)$. Consider the following problem
 \bess\left\{\begin{aligned}
&z_t=d\int_{a(t)}^{b(t)}J(x-y)z(t,y)\dy-dz-Cz, & &0<t<1,~a(t)<x<b(t),\\
&z(t,b(t))=z(t,a(t))=0,& &0<t<1,\\
&z(0,x)=w_0(x),& &|x|\le s_0.
\end{aligned}\right.
 \eess
In view of \cite[Lemma 2.3]{CDLL}, this problem has a unique solution $z$
which is continuous on $\{0\le t\le 1,\,a(t)\le x\le b(t)\}$ and satisfies
$z(t,x)>0$ for all $t\ge 0$ and $a(t)<x<b(t)$. Thus the functions
 \begin{align*}
 r(t):=\int_{a(t)}^{b(t)}\int_{b(t)}^{\infty}J(x-y)z(t,x)\dy\dx,\ \ \
 l(t):=\int_{a(t)}^{b(t)}\int_{-\infty}^{a(t)}J(x-y)z(t,x)\dy\dx\end{align*}
are positive and continuous on $[0, 1]$, and so $r(t), l(t)\ge\sigma>0$
on $[0, 1]$ for some constant $\sigma$. Since $a'(t)$ and $b'(t)$ are bounded
on $[0, 1]$, we can find $\mu^0>0$ such that when $\mu \geq\mu^0$, there hold:
 \begin{align*}
 b'(t)&\;\le\mu r(t)=\mu \int_{a(t)}^{b(t)}\int_{b(t)}^{\infty}J(x-y)z(t,x)\dy\dx, \\
 a'(t)&\;\ge-\mu l(t)=-\mu\int_{a(t)}^{b(t)}\int_{-\infty}^{a(t)}J(x-y)z(t,x)\dy\dx
 \end{align*}
for all $0\le t\le 1$. Applying the comparison principle we get
 \[c(t)\le a(t), \ \ s(t)\ge b(t), \ \ \forall\ 0\le t\le 1,\]
and so $s(1)-c(1)\ge b(1)-a(1)=2H$ when $\mu \geq\mu^0$. The desired conclusion now follows directly
and the proof if complete.
\end{proof}

\begin{proof}[Completiton of the proof of Theorem {\rm\ref{th3.5}}]  Since $u_1$ and $u_2$ are bounded, there exists $C>0$ such that
 \[a_1-b_1u_1(t,x)-c_1u_2(t,x)>-C.\]
We thus have
 \bess\left\{\begin{aligned}
 &u_{1t}\ge d_1\dd\int_{g(t)}^{h(t)}J_1(x-y)u_1(t,y)\dy-d_1u_1-Cu_1,
 && t>0,~g(t)<x<h(t),\\
 &u_1(t,g(t))=u_1(t,h(t))=0, &&t\ge 0,\\
 &h'(t)\ge\dd\mu_1\int_{g(t)}^{h(t)}\int_{h(t)}^{\infty}J_1(x-y)u_1(t,x)\dy\dx, &&t\ge 0,\\
 &g'(t)\le-\dd\mu_1\int_{g(t)}^{h(t)}\int_{-\infty}^{g(t)}J_1(x-y)u_1(t,x)\dy\dx,\ \ &&t\ge 0,\\
 &u_1(0,x)=u_{10}(x),\ \  &&|x|\le h_0,\\
 &h(0)=-g(0)=h_0.
 \end{aligned}\right. \eess
In view of Lemma \ref{l3.9}, there exists $\mu_1^0>0$ such that $h_\infty-g_\infty>\ell_*$ when $\mu_1>\mu_1^0$. Similarly, $h_\infty-g_\infty>\ell_*$ when $\mu_2>\mu_2^0$ for some $\mu_2^0>0$.
Take $\Lambda^0=\mu_1^0+\mu_2^0$. Then $h_\infty-g_\infty>\ell_*$ when $\mu_1+\mu_2>\Lambda^0$, and hence
$h_\infty-g_\infty=\infty$ by the conclusion (i).
\end{proof}

Clearly Theorems 1.2 and 1.3 follow directly from Theorem \ref{th3.3}, Corollary 3.4 and Theorem \ref{th3.5}.

\subsection{Long-time behaviour in the case of spreading}

In this subsection, we examine the long-time behaviour of the solution to \eqref{1.2} when $h_\infty-g_\infty=\infty$. For simplicity, we only consider two situations, namely the
  weak competition case and the weak predation case as described in Theorem \ref{th1.4}.

\begin{prop}\label{p3.10}\, Let $(u_1,u_2,g,h)$ be the unique solution of \eqref{1.2} with $h_\infty-g_\infty=\infty$. Then $g_\infty=-\infty$ and $h_\infty=\infty$ in the weak competition case and in the weak predation case.
\end{prop}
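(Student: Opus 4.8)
The plan is to argue by contradiction, using the dichotomy $h_\infty-g_\infty=\infty$ to reduce matters to excluding $h_\infty<\infty$; the conclusion $g_\infty=-\infty$ then follows from the same argument applied to the left boundary (with the roles of $h_\infty<\infty$ and $g_\infty>-\infty$ interchanged). So assume $h_\infty<\infty$, whence necessarily $g_\infty=-\infty$. Since $h'\ge 0$ and $\int_0^\infty h'(t)\,\dt=h_\infty-h_0<\infty$, there is a sequence $t_n\to\infty$ with $h'(t_n)\to 0$. Let $\ep_0\in(0,h_0/4)$ and $\delta_0>0$ be as in \eqref{2.10}, so that $J_1(z)\ge\delta_0$ for $|z|\le\ep_0$. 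For $x\in(h(t_n)-\ep_0/2,\,h(t_n))$ we have $\int_{h(t_n)}^\infty J_1(x-y)\,\dy=\int_{-\infty}^{x-h(t_n)}J_1(z)\,\dz\ge\delta_0\big(x-h(t_n)+\ep_0\big)\ge\delta_0\ep_0/2$, and since $g(t_n)\to-\infty$, for large $n$,
\[
h'(t_n)\ge\mu_1\int_{g(t_n)}^{h(t_n)}\!\int_{h(t_n)}^{\infty}J_1(x-y)u_1(t_n,x)\,\dy\dx
\ge\frac{\mu_1\delta_0\ep_0}{2}\int_{h(t_n)-\ep_0/2}^{h(t_n)}u_1(t_n,x)\,\dx .
\]
Therefore $\int_{h(t_n)-\ep_0/2}^{h(t_n)}u_1(t_n,x)\,\dx\to 0$.

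Next I would establish uniform (in $x$) upper bounds for the solution via spatially homogeneous supersolutions and Lemma \ref{l2.2}. Comparing $u_1$ with $\bar z_1(t)$ solving $\bar z_1'=\bar z_1(a_1-b_1\bar z_1)$, $\bar z_1(0)=A_1$ (using $f_1\le u_1(a_1-b_1u_1)$ and $\int_{g(t)}^{h(t)}J_1\le 1$) yields $u_1(t,x)\le\bar z_1(t)\to a_1/b_1$. In the competition case, the same device with $\bar z_2'=\bar z_2(a_2-b_2\bar z_2)$, $\bar z_2(0)=A_2$, gives $u_2(t,x)\le\bar z_2(t)\to a_2/b_2=:\bar m_2$; in the predation case one first uses $u_1\le\bar z_1$ to obtain $f_2\le u_2(a_2+c_2\bar z_1(t)-b_2u_2)$ and compares with the (asymptotically autonomous) equation $\bar z_2'=\bar z_2(a_2+c_2\bar z_1(t)-b_2\bar z_2)$, whose solution tends to $(a_2b_1+a_1c_2)/(b_1b_2)=:\bar m_2$. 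In both cases the weak hypothesis is exactly what guarantees $a_1-c_1\bar m_2>0$ (namely $a_1/a_2>c_1/b_2$ for \eqref{1.4}, and $a_1b_1b_2>a_2b_1c_1+a_1c_1c_2$ for \eqref{1.5}). Fix $\ep>0$ with $\theta_\ep:=a_1-c_1(\bar m_2+\ep)>0$; then there is $T_\ep$ with $u_2(t,x)\le\bar m_2+\ep$ for all $t\ge T_\ep$, $g(t)\le x\le h(t)$, and hence
\[
u_{1t}\ge d_1\!\int_{g(t)}^{h(t)}\!J_1(x-y)u_1(t,y)\,\dy-d_1u_1+u_1(\theta_\ep-b_1u_1),\qquad t\ge T_\ep,\ g(t)<x<h(t).
\]

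The final step is a persistence estimate on a fixed interval abutting $h_\infty$. Since $\theta_\ep>0$, by \cite[Proposition 3.4]{CDLL} we may pick $\alpha<\beta:=h_\infty-\ep_0/16$ with $\beta-\alpha$ so large that $\lambda_p\big(\mathcal{L}^{d_1}_{(\alpha,\beta)}+\theta_\ep\big)>0$. Because $g(t)\to-\infty$ and $h(t)\uparrow h_\infty>\beta$, there is $T_1\ge T_\ep$ with $[\alpha,\beta]\subset(g(t),h(t))$ for $t\ge T_1$, and $u_1(T_1,\cdot)>0$ on $[\alpha,\beta]$ by Theorem \ref{th2.1}. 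Using $u_1\ge 0$, the last displayed inequality shows $u_1$ is a supersolution on $[T_1,\infty)\times[\alpha,\beta]$ of the fixed-domain problem
\[
z_t=d_1\!\int_\alpha^\beta\!J_1(x-y)z(t,y)\,\dy-d_1z+z(\theta_\ep-b_1z),\qquad t>T_1,\ x\in[\alpha,\beta],
\]
with $z(T_1,\cdot)=u_1(T_1,\cdot)>0$. By a comparison argument (\cite[Lemma 3.3]{CDLL}) and the convergence result already used in the proof of Theorem \ref{th3.3} (see \cite{BZ07, Cov}), $u_1(t,x)\ge z(t,x)\to Z(x)$ uniformly on $[\alpha,\beta]$, where $Z>0$ in $(\alpha,\beta)$ is the unique positive steady state. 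Putting $\eta_0:=\min_{[h_\infty-\ep_0/4,\,h_\infty-\ep_0/8]}Z>0$, we obtain $u_1(t,x)\ge\eta_0/2$ on $[h_\infty-\ep_0/4,\,h_\infty-\ep_0/8]$ for all large $t$. But for $n$ large this interval lies inside $(h(t_n)-\ep_0/2,\,h(t_n))$ (as $h(t_n)<h_\infty$ and $h(t_n)\uparrow h_\infty$), so
\[
\int_{h(t_n)-\ep_0/2}^{h(t_n)}u_1(t_n,x)\,\dx\ \ge\ \frac{\eta_0}{2}\cdot\frac{\ep_0}{8}\ >\ 0,
\]
contradicting the first step. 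Hence $h_\infty=\infty$, and the symmetric argument applied to the left boundary gives $g_\infty=-\infty$.

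The step I expect to be the main obstacle is the handling of $h'$: unlike in Lemma \ref{l3.1}, the Lipschitz-in-time bound for $h'$ is unavailable on an unbounded domain, so one can only control $h'$ along a sequence, which throws the full burden onto the persistence estimate near the finite endpoint $h_\infty$. That estimate succeeds only because one can extract a strictly positive effective growth rate for $u_1$, and this is precisely where the weak competition / weak predation inequalities enter, through the uniform-in-$x$ upper bounds on $u_2$.
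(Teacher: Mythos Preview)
Your argument is correct, but the route differs substantially from the paper's. The paper does not argue by contradiction and never passes to a subsequence $t_n$ with $h'(t_n)\to 0$. Instead it works with the \emph{other} species: a simple ODE comparison gives $u_1\le a_1/b_1+\varepsilon$ for large $t$, and then $(u_2,g,h)$ is seen to be a supersolution of a single-species nonlocal free boundary problem of the type treated in \cite{CDLL}, with effective growth rate $\tilde a_2=a_2-c_2(a_1/b_1+\varepsilon)>0$ in the competition case (using only $b_1/c_2>a_1/a_2$), and simply $a_2>0$ in the predator--prey case (no extra hypothesis needed). Since $h_\infty-g_\infty=\infty$, one can choose $T$ so that $h(T)-g(T)$ exceeds the critical length for that auxiliary problem, and \cite[Theorem 1.3]{CDLL} then forces both free boundaries of the auxiliary problem to go to $\pm\infty$; comparison finishes the job. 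This is shorter and, as the paper notes in Remark~\ref{r3.11}, it uses strictly less of the structural hypotheses than your approach: only one of the two weak-competition inequalities, and none of the weak-predation inequality. Your approach, by contrast, hinges on the inequality $a_1-c_1\bar m_2>0$, which in the predator--prey case is exactly the weak-predation condition and in the competition case is the \emph{other} inequality $a_1/a_2>c_1/b_2$. What your route buys is self-containment: you avoid invoking the full spreading theory of \cite{CDLL} for the auxiliary free boundary problem, replacing it by a fixed-domain eigenvalue/persistence estimate near the putative finite endpoint.
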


\begin{proof}\, We first consider the weak competition case. By a simple comparison argument involving the ODE problem $v'=v(a_1-b_1 v),\; v(0)=\|u_{10}\|_\infty$, we easily see that, for any small $\varepsilon>0$, there exists $T=T_\varepsilon>0$ large such that
  \[ u_1(t,x)\leq {a_1}/{b_1}+\varepsilon \ \mbox{ for } t\geq T,\; x\in [g(t), h(t)].\]
Since $b_1/c_2>a_1/a_2$, we may assume that $\tilde a_2:=a_2-c_2({a_1}/{b_1}+\varepsilon)>0$.
Thus $(u_2,g,h)$ satisfies
 \bes\label{3.18}
 \hspace{1cm}\left\{\begin{aligned}
 &u_{2t}\ge d_2\dd\int_{g(t)}^{h(t)}J_2(x-y)u_2(t,y)\dy-d_2u_2+u_2(\tilde a_2-b_2u_2), && t\geq T,~g(t)<x<h(t),\\
&u_2(t,g(t))=u_2(t,h(t))=0, &&t\ge T,\\
&h'(t)\geq\dd\mu_2\int_{g(t)}^{h(t)}\int_{h(t)}^{\infty}J_2(x-y)u_2(t,x)\dy\dx, &&t\ge T,\\
 &g'(t)\le-\dd\mu_2\int_{g(t)}^{h(t)}\int_{-\infty}^{g(t)}
  J_2(x-y)u_2(t,x)\dy\dx, &&t\ge T.\\
\end{aligned}\right.
 \ees
 Consider the following auxiliary problem
 \bes\label{3.19}
 \left\{\begin{aligned}
 &w_t=d_2\dd\int_{\eta(t)}^{r(t)}J_2(x-y)w(t,y)\dy-d_2w+w(\tilde a_2-b_2w), && t>T,~\eta(t)<x<r(t),\\
 &w(t,\eta(t))=w(t,r(t))=0, &&t\ge T,\\
 &r'(t)=\dd\mu_2\int_{\eta(t)}^{r(t)}\int_{r(t)}^{\infty}J_2(x-y)w(t,x)\dy\dx, &&t\ge T,\\
 &\eta'(t)=-\dd\mu_2\int_{\eta(t)}^{r(t)}\int_{-\infty}^{\eta(t)}
 J_2(x-y)w(t,x)\dy\dx, &&t\ge T,\\
 &w(T,x)=u_2(T,x),\ \ &&g(T)\leq x\leq h(T),\\
 &\eta(T)=g(T), \ \ r(T)=h(T).
\end{aligned}\right.
 \ees
 By the comparison principle, the unique solution $(w,\eta, r)$ of \eqref{3.19} satisfies
 \[ w(t,x)\leq u_2(t,x), \ \ \; \eta(t)\geq g(t), \ \ \; r(t)\leq h(t). \]

 In view of \cite[Theorem 1.3]{CDLL}, if $d_2\leq \tilde a_2$, then spreading happens to \eqref{3.19} and hence $\eta(t)\to-\infty$,
 $r(t)\to\infty$ as $t\to\infty$, which imply that $g_\infty=-\infty, \; h_\infty=\infty$.

If $d_2> \tilde a_2$, then \cite[Theorem 1.3]{CDLL} infers the existence of a unique $\ell_2$ such that spreading happens to \eqref{3.19} provided $h(T)-g(T)\geq \ell_2$. The latter is guaranteed to happen if $T$ is large enough, since $h_\infty-g_\infty=\infty$. Therefore in either case, we must have $g_\infty=-\infty,\; h_\infty=\infty$.

We now consider the predator-prey case. This time \eqref{3.18} holds for any $T>0$ with $\tilde a_2$ replaced by $a_2$. Hence the same argument shows that $g_\infty=-\infty,\; h_\infty=\infty$.
The proof is complete. \end{proof}

\begin{rem}\label{r3.11}\, The assumptions in Proposition {\rm\ref{p3.10}} can be relaxed.
From the above proof, it is easily seen that the conclusion holds whenever $(f_1, f_2)$ satisfies \eqref{1.5}, and in the competition case \eqref{1.4}, the conclusion holds if either ${b_1}/{c_2}>{a_1}/{a_2}$ or ${a_1}/{a_2}>{c_1}/{b_2}$.
\end{rem}

The remaining part of this paper is devoted to the proof of Theorem \ref{th1.4}.
We start with several preparitory results. Consider the auxiliary problem
\begin{equation}\label{3.20}
\begin{cases}
u_t=d\displaystyle\int_{\R}J(x-y)u(t,y)dy-du
+u(a(x)-bu),&t>0,\ x\in\R,\\
u(0,x)=u_0(x),&x\in\R,
\end{cases}
\end{equation}
where $a\in C(\R)\cap L^\infty(\R)$, $d$ and $b$ are  positive constants, and $J$ satisfies {\bf (J)}.

\begin{prop}\label{p3.12}
Let $a$, $b$, $d$ and $J$ be as given above. Then the following conclusions hold:

{\rm (i)} For any bounded interval $\Omega$, the principal eigenvalue $\lambda_p(\mathcal{L}_\Omega^d+a)$ is strictly increasing in $a$.

{\rm (ii)} If $\lambda_p(\mathcal{L}_{\R}^d+a):=\dd\lim_{l\to\infty}
\lambda_p(\mathcal{L}_{(-l,l)}^d+a)>0$, then  problem \eqref{3.20} admits a unique positive steady state $U(x)$. Moreover, for any non-negative initial function $u_0\in C(\R)\cap L^\infty(\R)$, $u_0\not\equiv 0$, the unique solution of \eqref{3.20} satisfies
 \[\lim_{t\to\infty}u(t,x)=U(x) \ \text{\ locally uniformly in\ } \R.\]
\end{prop}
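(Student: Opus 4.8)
The plan is to prove Proposition~\ref{p3.12} in two parts, the first being essentially a known monotonicity property and the second the substantive assertion about existence, uniqueness and convergence for the Cauchy problem \eqref{3.20}.

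For part~(i), I would use the variational/infimum characterization of $\lambda_p(\mathcal{L}_\Omega^d+a)$ together with the fact that for bounded $\Omega$ the generalized principal eigenvalue is in fact attained by a positive eigenfunction $\phi$ solving $(\mathcal{L}_\Omega^d+a)\phi=\lambda_p\phi$ (this is the known theory for nonlocal operators on bounded domains, and is used freely in the earlier part of the paper via \cite{BZ07,Cov,CDLL}). If $a_1\le a_2$ on $\Omega$ with strict inequality on a set of positive measure, then plugging the eigenfunction $\phi_1$ of $\mathcal{L}_\Omega^d+a_1$ into the infimum defining $\lambda_p(\mathcal{L}_\Omega^d+a_2)$ gives $(\mathcal{L}_\Omega^d+a_2)\phi_1 = \lambda_p(\mathcal{L}_\Omega^d+a_1)\phi_1 + (a_2-a_1)\phi_1 \ge \lambda_p(\mathcal{L}_\Omega^d+a_1)\phi_1$, so $\lambda_p(\mathcal{L}_\Omega^d+a_2)\ge\lambda_p(\mathcal{L}_\Omega^d+a_1)$; strictness follows by testing the eigenfunction equation of the larger problem against $\phi_1$ (or equivalently, a standard argument showing the inequality in the test function cannot be an equality when $a_2-a_1\not\equiv 0$ and $\phi_1>0$).

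For part~(ii), the existence and uniqueness of a positive steady state $U$ of \eqref{3.20} under the hypothesis $\lambda_p(\mathcal{L}_\R^d+a)>0$ is exactly the type of result established in \cite{BZ07,Cov} for logistic nonlocal equations on $\R^N$ with a spatially heterogeneous but bounded coefficient; I would cite those references for the construction of $U$, noting that $0<U(x)\le \|a\|_\infty/b$ by comparison with constants. For the convergence statement I would run a squeezing (sub-/super-solution) argument. The supersolution is easy: $\bar u \equiv \|a\|_\infty/b$ (or, more sharply, the solution of the spatially homogeneous logistic ODE with this growth bound), which dominates $u$ for all $t>0$ by the comparison principle (Lemma~\ref{l2.2}-type comparison for the whole-line equation), and one then shows $\limsup_{t\to\infty}u(t,x)\le U(x)$ locally uniformly by comparing on large balls $(-l,l)$: since $\lambda_p(\mathcal{L}_{(-l,l)}^d+a)\nearrow\lambda_p(\mathcal{L}_\R^d+a)>0$ as $l\to\infty$, for $l$ large the Dirichlet problem on $(-l,l)$ has a unique positive steady state $U_l$, its solution converges to $U_l$, and $U_l\nearrow U$ locally uniformly as $l\to\infty$. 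The subsolution is the more delicate point: starting from $u_0\not\equiv 0$, the comparison principle and the positivity part of Lemma~\ref{l2.2} give $u(1,x)>0$ on $\R$; then for $l$ large so that $\lambda_p(\mathcal{L}_{(-l,l)}^d+a)>0$, a small multiple $\epsilon\psi_l$ of the positive Dirichlet eigenfunction on $(-l,l)$ is a subsolution with $\epsilon\psi_l\le u(1,\cdot)$ on $[-l,l]$, so the solution of the Dirichlet problem on $(-l,l)$ with this data lies below $u$ and increases to $U_l$; hence $\liminf_{t\to\infty}u(t,x)\ge U_l(x)$ and, letting $l\to\infty$, $\ge U(x)$ locally uniformly. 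Combining the two bounds gives $u(t,x)\to U(x)$ locally uniformly in $\R$.

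The main obstacle I anticipate is handling the whole-line problem \eqref{3.20} rigorously: on all of $\R$ the operator has no compact resolvent, $\lambda_p(\mathcal{L}_\R^d+a)$ is only a limit of bounded-domain eigenvalues, and one must be careful that the positive steady state $U$ exists, is unique, and that the bounded-domain steady states $U_l$ genuinely increase to it — this monotone convergence $U_l\nearrow U$, together with the fact that the convergence is only \emph{locally} uniform (not uniform on $\R$), is where the care is needed. All of this, however, is within the reach of the techniques already cited in the paper (\cite{BZ07,Cov,CDLL}) adapted to a bounded variable coefficient $a(x)$, so the proof amounts to assembling the monotone sub-/super-solution squeeze on exhausting intervals and passing to the limit.
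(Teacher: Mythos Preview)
Your proposal is essentially correct and goes well beyond what the paper actually does: the paper's own proof consists of two citations, namely \cite[Proposition~1.1\,(ii)]{Cov} for part~(i) and \cite[Section~4]{B-JMB16} for part~(ii), with no further argument. Your outline for~(i) is precisely the content of Coville's monotonicity result, and your squeezing scheme for~(ii) is the standard machinery behind the Berestycki--Coville--Vo persistence theory that the paper invokes.

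There is, however, one genuine confusion in your sketch of~(ii) that you should fix. The Dirichlet problems on $(-l,l)$ and their steady states $U_l$ give \emph{lower} bounds for $u$, not upper bounds: the solution of the truncated problem on $(-l,l)$ sits below $u$ (it sees less mass in the convolution), and $U_l\nearrow U$. This is exactly what you use, correctly, in your subsolution step to get $\liminf_{t\to\infty}u\ge U$. But you also invoke ``comparing on large balls $(-l,l)$'' for the $\limsup$ direction, and that does not work. For the upper bound you need the opposite monotone iteration: take $M=\max\{\|u_0\|_\infty,\|a\|_\infty/b\}$, solve the whole-line problem \eqref{3.20} with constant initial data $M$; this solution $\bar u(t,x)$ dominates $u(t,x)$, is nonincreasing in $t$ (since $M$ is a supersolution of the steady-state equation), and hence converges pointwise to a bounded positive steady state, which by uniqueness is $U$. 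Dini's theorem then upgrades this to local uniform convergence and yields $\limsup_{t\to\infty}u\le U$. With this correction your argument is complete and matches what the cited references carry out in detail.
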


\begin{proof}
Conclusion (i) follows from \cite[Proposition 1.1 (ii)]{Cov}. Conclusion (ii) can be obtained by similar arguments as in \cite[Section 4]{B-JMB16}.
\end{proof}

Next we consider
the auxiliary problem
\begin{equation}\label{3.21}
\begin{cases}
u_t=d\displaystyle\int_{-l}^lJ(x-y)u(t,y)dy-du
+u(a_l-bu),&t>0,\ x\in[-l,l],\\
u(0,x)=u_0(x),&x\in[-l,l],
\end{cases}
\end{equation}
where $a_l,\ d,\ b,\; l$  are positive constants, $J$ satisfies {\bf (J)}, and $a_l$ satisfies
 \[\lim_{l\to\infty}a_l=a>0.\]

\begin{lem}\label{l3.13}
Under the above assumptions,  there exists $L>0$ large such that  for any $l>L$ and any $u_0\in C([-l,l])$ satisfying $u_0\geq,\not\equiv0$, the following conclusions hold:

{\rm (i)} \ The unique solution of \eqref{3.21} satisfies
 \[\lim_{t\to\infty}u_l(t,x)=u_l(x) \
\text{\ uniformly for \ } x\in[-l,l],\]
where $u_l(x)$ is the unique positive solution of
 \[d\displaystyle\int_{-l}^lJ(x-y)u(y)dy-du+u(a_l-bu)=0,\ \ x\in[-l,l];\]

{\rm (ii)}\
 \[\lim_{l\to\infty}u_l(x)={a}/b \
\text{\ locally uniformly in\ } x\in\R.\]
\end{lem}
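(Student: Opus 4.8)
The plan is to obtain conclusion (i) from the scalar theory of nonlocal logistic equations on a bounded interval, and conclusion (ii) from a monotonicity/comparison argument that reduces the assertion to the uniqueness of the positive steady state on $\R$. For (i), the first point is that $\lambda_p\big(\mathcal L^{d}_{(-l,l)}+a_l\big)>0$ for all large $l$: since $a_l\to a>0$ we have $a_l\ge a/2$ once $l$ is large, hence by Proposition \ref{p3.12}(i) (monotonicity of $\lambda_p$ in the potential) $\lambda_p(\mathcal L^d_{(-l,l)}+a_l)\ge\lambda_p(\mathcal L^d_{(-l,l)}+a/2)$, and the latter is positive for all large $l$ by \cite[Proposition 3.4]{CDLL}. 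For such $l$ the existence and uniqueness of a positive solution $u_l$ of the stationary equation and the uniform convergence $u_l(t,\cdot)\to u_l$ are the bounded-interval analogues of Proposition \ref{p3.12}(ii) and may be quoted from \cite{BZ07,Cov}. Alternatively one argues directly: $M_l:=\max\{\|u_0\|_\infty, a_l/b\}$ is a constant super-solution, and by the strong maximum principle in Lemma \ref{l2.2} one has $u_l(t_0,\cdot)\ge \kappa\varphi_l$ for some $\kappa>0$ once $t_0>0$, where $\varphi_l>0$ is the principal eigenfunction; the flows emanating from $M_l$ and from $\kappa\varphi_l$ are respectively nonincreasing and nondecreasing, bounded, hence converge to positive steady states which by uniqueness both equal $u_l$, and sandwiching $u_l(t,\cdot)$ between them gives the claimed uniform convergence.

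For (ii), the easy half is the upper bound: at a maximum point $x_0\in[-l,l]$ of $u_l$ the stationary equation together with $\int_{-l}^l J(x_0-y)\dy\le 1$ forces $u_l(x_0)(a_l-bu_l(x_0))\ge 0$, whence $\|u_l\|_\infty\le a_l/b\to a/b$. For the lower bound, fix $\ep>0$; for $l$ large we have $a-\ep<a_l<a+\ep$, so if $u_l^{\pm}$ denotes the positive steady state on $[-l,l]$ of the logistic equation with the \emph{constant} growth rate $a\pm\ep$ (which exists for $l$ large by the same eigenvalue argument), a routine comparison in which $u_l^-$ is a sub-solution and $u_l^+$ a super-solution of the equation for $u_l$ yields $u_l^-\le u_l\le u_l^+$ on $[-l,l]$. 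It therefore suffices to show that, for each constant $c>0$, the positive steady state $U_l^{c}$ of $d\int_{-l}^l J(x-y)U\dy-dU+U(c-bU)=0$ on $[-l,l]$ satisfies $U_l^{c}\to c/b$ locally uniformly in $\R$ as $l\to\infty$.

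This last point I would prove by monotonicity in $l$. For $l'>l$ the restriction $U_{l'}^{c}\big|_{[-l,l]}$ is a super-solution of the equation for $U_l^{c}$, since $d\int_{-l}^l J(x-y)U_{l'}^{c}\dy\le d\int_{-l'}^{l'}J(x-y)U_{l'}^{c}\dy$ (all terms nonnegative); because $U_l^{c}$ is the unique positive steady state on $[-l,l]$ and attracts the flow started from the positive datum $U_{l'}^{c}\big|_{[-l,l]}$, we get $U_{l'}^{c}\ge U_l^{c}$ on $[-l,l]$. Hence $l\mapsto U_l^{c}$ is nondecreasing; being bounded above by $c/b$ it converges pointwise to some $U_\infty^{c}$ with $0<U_\infty^{c}\le c/b$, and monotone convergence in the nonlocal term shows $U_\infty^{c}$ is a bounded positive solution of $d\int_\R J(x-y)U\dy-dU+U(c-bU)=0$ on $\R$. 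Invoking the uniqueness part of Proposition \ref{p3.12}(ii) with $a(x)\equiv c$ — whose unique positive steady state is the constant $c/b$ — forces $U_\infty^{c}\equiv c/b$; since the $U_l^{c}$ are continuous, nondecreasing in $l$, and converge on each compact interval to the continuous limit $c/b$, Dini's theorem upgrades this to uniform convergence on compacts. Letting $\ep\to0$ in $U_l^{a-\ep}\le u_l\le U_l^{a+\ep}$ then completes the proof of (ii).

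The delicate step, and the one I expect to require the most care, is the identification $U_\infty^{c}\equiv c/b$: one must make sure the limiting function — a priori only an increasing limit of continuous functions — is covered by the uniqueness statement for positive steady states of the nonlocal logistic equation on $\R$, and that the limit may indeed be passed inside the nonlocal integral. Here the uniform bound $U_l^{c}\le c/b$ makes monotone convergence available, and it is precisely the uniqueness of the bounded positive steady state on $\R$ that rules out the limit being a smaller, non-constant profile and thus pins down the plateau value $c/b$.
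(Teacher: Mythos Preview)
Your proposal is correct and follows essentially the same route as the paper. For (i) the arguments coincide: positivity of $\lambda_p\big(\mathcal L^d_{(-l,l)}+a_l\big)$ for large $l$ via monotonicity in the potential and \cite[Proposition~3.4]{CDLL}, then an appeal to \cite{BZ07,Cov} for existence, uniqueness and convergence. For (ii) both proofs sandwich $u_l$ between the constant-coefficient steady states on $[-l,l]$ with growth rates $a\pm\varepsilon$ and reduce the question to showing that, for a fixed constant $c>0$, the positive steady state $U_l^c$ on $[-l,l]$ converges to $c/b$ locally uniformly as $l\to\infty$; the only difference is that the paper simply quotes this last fact from \cite[Proposition~3.6]{CDLL}, whereas you prove it in-line via monotonicity of $U_l^c$ in $l$, monotone convergence in the nonlocal term, uniqueness of the bounded positive steady state on $\R$ (Proposition~\ref{p3.12}(ii)), and Dini's theorem. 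So your argument is more self-contained but not genuinely different; your maximum-point upper bound $\|u_l\|_\infty\le a_l/b$ is an extra observation the paper does not use but which is harmless.
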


\begin{proof}
Given any $\varepsilon>0$ small, we can find $L_0>0$ such that
  \[0<a - \varepsilon<a_l<a+\varepsilon \ \mbox{ for } l>L_0.\]
It follows that, for such $l$,
  \[\lambda_p\left(\mathcal{L}_{(-l,l)}^d+a_l\right)>
  \lambda_p\left(\mathcal{L}_{(-l,l)}^d+a-\varepsilon\right)\to a-\varepsilon>0 \mbox{ as } l\to\infty.\]
Therefore
there exists $L\geq L_0$ such that
  \[\lambda_p\left(\mathcal{L}_{(-l,l)}^d+a_l\right)>0 \ \text{\ for\ } l>L,\]
and by  \cite{BZ07, Cov} and Proposition 3.4 of \cite{CDLL} we can conclude that for $l>L$, (i) holds.

To show (ii), we note that for $l>L$, $u_l(t,x)$ is a super-solution to \eqref{3.21} with $a_l$ replaced by $a-\varepsilon$, whose unique solution we denote by $u_{l,\epsilon}(t,x)$. Hence $u_l(t,x)\geq u_{l,\varepsilon}(t,x)$. By \cite{BZ07, Cov} again, we see that as $t\to\infty$, $u_{l,\varepsilon}(t,x)\to u_{l,\varepsilon}(x)$ uniformly in $[-l,l]$, with $u_{l,\varepsilon}(x)$ the unique steady state of the problem. We thus obtain $u_l(x)\geq u_{l,\varepsilon}(x)$. According to \cite[Proposition 3.6]{CDLL}, we have
 \[\lim_{l\to\infty} u_{l,\varepsilon}(x)=(a-\varepsilon)/b \ \mbox{ locally uniformly in } \mathbb R.\]
It follows that $\dd\liminf_{l\to\infty}u_l(x)\geq (a-\varepsilon)/b $ locally uniformly in $\mathbb R$. The arbitrariness of $\varepsilon$ then infers that
 \[\liminf_{l\to\infty}u_l(x)\geq a/b \ \mbox{ locally uniformly in } \mathbb R.\]

Analogously we can show
 \[\limsup_{l\to\infty}u_l(x)\leq a/b \ \mbox{ locally uniformly in } \mathbb R.\]
Therefore (ii) holds.
\end{proof}

When spreading happens, in the local diffusion case, to study the long-time behavior  of diffusive population systems with free boundaries, a key tool is an  iteration method, which  has been widely used in, for example, \cite{Wjde14, WZjdde14, WZjdde17, ZW2015}. To adapt this method to the nonlocal diffusion case here, we rely on
the following technical lemma.

\begin{lem}\label{l3.14}\, { Let $g(t)<h(t)$ be two continuous functions satisfying
 \[\lim_{t\to\infty}g(t)=-\infty,\; \ \ \lim_{t\to\infty} h(t)=\infty.\]
Let $K_0$ be a positive constant, $w$ be a continuous function satisfying $|w(t,x)|\leq K_0$ for $t>0,\; x\in [g(t), h(t)]$. Suppose that $u$ satisfies
\begin{equation*}
\begin{cases}
u_t=d\displaystyle\int_{g(t)}^{h(t)}J(x-y)u(t,y)dy-du
+u(a-bu-w(t,x)),&t>0,\ x\in(g(t),h(t)),\\
u(t,g(t))=u(t,h(t))=0,&t>0,\\
u(0,x)=u_0(x),\ -g(0)=h(0)=h_0,&x\in(-h_0,h_0),
\end{cases}
\end{equation*}
where $a,b,d,h_0$ are positive constants, $J$ satisfies {\rm \bf (J)}, $u_0\in C([-h_0,h_0])$ is nonnegative and not identically $0$. }
Then the following statements hold:

{\rm (i)} If for some constant $m\in[-K_0, K_0]$,
\begin{equation}\label{3.22}
\liminf_{t\to\infty}w(t,x)\geq m
\ \text{\ locally uniformly in\ } \R,
\end{equation}
then
\[\limsup_{t\to\infty}u(t,x)\leq[a- m]_+/b
\ \text{\ locally uniformly in\ } \R,\]
where $[\ \cdot\ ]_+$ is defined by $[\theta]_+=\max\{\theta, 0\}$.

{\rm (ii)} If $a>M$ and
\begin{equation}\label{3.23}
\limsup_{t\to\infty}w(t,x)\leq M
\ \text{\ locally uniformly in\ } \R
\end{equation}
 for some constant $M$, then
\[\liminf_{t\to\infty}u(t,x)\geq(a-M)/b
\ \text{\ locally uniformly in\ } \R.\]
\end{lem}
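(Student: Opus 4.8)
The plan is to run, for each target radius $R>0$, a comparison argument on a large fixed interval $[-l,l]$ that eventually lies inside the expanding domain $(g(t),h(t))$, combining the local‑uniform control of $w$ with the nonlocal logistic theory recorded in Lemma~\ref{l3.13} and Proposition~\ref{p3.12}. I would begin with a crude a priori bound: comparing $u$ with the spatially constant function $v(t)$ solving $v'=v(a+K_0-bv)$, $v(0)=\|u_0\|_\infty$ — which is a supersolution of the $u$‑problem on the moving domain because $\int_{g(t)}^{h(t)}J(x-y)v\,dy\le v$ and $a+K_0\ge a-w$ — and with $0$ via Lemma~\ref{l2.2}, gives $0\le u(t,x)\le K_1:=\max\{\|u_0\|_\infty,(a+K_0)/b\}$ for all $t>0$, $x\in[g(t),h(t)]$. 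I extend $u$ by $0$ outside $[g(t),h(t)]$ throughout.

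Part (ii) is the easier half, since restricting the nonlocal integral to $[-l,l]$ only \emph{decreases} it — the favourable direction for a lower bound. Fix $R>0$ and $\varepsilon>0$ with $a-M-\varepsilon>0$. By \eqref{3.23} and $g(t)\to-\infty$, $h(t)\to+\infty$, pick $l>R$ and $T>0$ so that $[-l,l]\subset(g(t),h(t))$ and $w(t,x)\le M+\varepsilon$ for $t\ge T$, $|x|\le l$. Then for $t\ge T$, $u$ restricted to $[-l,l]$ is a (generalized) supersolution of problem \eqref{3.21} with $a_l$ replaced by $a-M-\varepsilon$ and started at time $T$ from $u(T,\cdot)$. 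For $l$ large $\lambda_p(\mathcal L^{d}_{(-l,l)}+a-M-\varepsilon)>0$, so this problem has a positive steady state $\underline U_l$ attracting all nontrivial nonnegative data (\cite{BZ07,Cov}, \cite[Proposition~3.4]{CDLL}), and the comparison principle yields $\liminf_{t\to\infty}u(t,x)\ge\underline U_l(x)$ uniformly on $[-l,l]$. By Lemma~\ref{l3.13}(ii) (equivalently \cite[Proposition~3.6]{CDLL}), $\underline U_l\to(a-M-\varepsilon)/b$ locally uniformly as $l\to\infty$, hence $\liminf_{t\to\infty}u(t,x)\ge(a-M-\varepsilon)/b$ locally uniformly; letting $\varepsilon\downarrow0$ finishes (ii).

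For part (i) the same restriction loses mass (wrong direction), so the new ingredient is to absorb the inflow from $[g(t),h(t)]\setminus[-l,l]$ into a forcing term. Fix $R>0$, $\varepsilon>0$, and choose $l>R$, $T>0$ with $[-l,l]\subset(g(t),h(t))$ and $w(t,x)\ge m-\varepsilon$ for $t\ge T$, $|x|\le l$. Splitting the integral and using $0\le u\le K_1$ outside $[-l,l]$ shows that, for $t\ge T$, $u$ restricted to $[-l,l]$ is a subsolution of the forced problem
\[\bar u_t=d\int_{-l}^{l}J(x-y)\bar u(t,y)\,dy-d\bar u+d\Psi_l(x)+\bar u\big(a-m+\varepsilon-b\bar u\big),\qquad \bar u(T,\cdot)\equiv K_1,\]
with $\Psi_l(x):=K_1\int_{|z|>l-|x|}J(z)\,dz$. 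This problem has a unique nonnegative steady state $\bar U_l$ to which $\bar u(t,\cdot)$ converges uniformly on $[-l,l]$ (if $a-m+\varepsilon>0$, from $\lambda_p>0$ and \cite{BZ07,Cov}; if $a-m+\varepsilon\le0$, from the sub/supersolution pair $0$ and a large constant together with the standard monotonicity argument), so by comparison $\limsup_{t\to\infty}\sup_{|x|\le R}u(t,x)\le\sup_{|x|\le R}\bar U_l$ for every large $l$.

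The crux — and the step I expect to be the main obstacle — is to prove $\limsup_{l\to\infty}\sup_{|x|\le R}\bar U_l\le[a-m+\varepsilon]_+/b$, because $\Psi_l$ is \emph{not} uniformly small near $\pm l$ and so $\bar U_l$ need not be small at the endpoints. Evaluating the steady‑state identity at an interior maximum of $\bar U_l$ gives a bound $\|\bar U_l\|_{L^\infty([-l,l])}\le\Theta^*$ independent of $l$; then $\bar U_l(x)$ is the unique nonnegative root of $b\,s^2+(d-a+m-\varepsilon)\,s=G_l(x)$ with $G_l(x):=d\int_{-l}^lJ(x-y)\bar U_l(y)\,dy+d\Psi_l(x)\ge0$ bounded and equi‑Lipschitz in $l$ (continuity of translation in $L^1$ for the first term, $J\in L^\infty$ for $\Psi_l$), which makes $\{\bar U_l\}$ equicontinuous. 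Along a subsequence $\bar U_l\to\bar U_\infty$ locally uniformly with $\bar U_\infty\ge0$ bounded; since $\Psi_l\to0$ locally uniformly, $\bar U_\infty$ solves the entire nonlocal logistic equation $d\int_{\mathbb R}J(x-y)\bar U_\infty\,dy-d\bar U_\infty+\bar U_\infty(a-m+\varepsilon-b\bar U_\infty)=0$, whose only bounded nonnegative solutions are $0$ and, when $a-m+\varepsilon>0$, the constant $(a-m+\varepsilon)/b$ (cf. \cite{BZ07,Cov} and Proposition~\ref{p3.12}), so $\bar U_\infty\le[a-m+\varepsilon]_+/b$ in all cases. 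Passing the limits in the order $t\to\infty$, then $l\to\infty$, then $\varepsilon\downarrow0$ yields $\limsup_{t\to\infty}\sup_{|x|\le R}u(t,x)\le[a-m]_+/b$ for every $R$, which is (i). Apart from this compactness/Liouville step, the only real care needed is the bookkeeping of these nested limits and of the exceptional case $a-m+\varepsilon\le 0$.
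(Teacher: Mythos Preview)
Your treatment of part~(ii) is essentially the paper's proof. For part~(i) you take a genuinely different route, and the comparison is instructive.

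The paper avoids the additive forcing term altogether by comparing $u$ not with a problem on $[-l,l]$ but with the solution $z_n$ of the \emph{whole-line} problem
\[
z_t=d\int_{\R}J(x-y)z(t,y)\,dy-dz+z\bigl(a_n(x)-bz\bigr),\qquad z(T_n,\cdot)\equiv K,
\]
where $a_n(x)$ is a continuous coefficient equal to $\sigma_n\approx a-m$ on $[-n,n]$ and equal to the crude bound $a+K_0+1$ outside $[-n-\tfrac12,n+\tfrac12]$. Since $a-w(t,x)\le a_n(x)$ everywhere once $t\ge T_n$, the extended $u$ is a subsolution and $u\le z_n$. Proposition~\ref{p3.12}(ii) gives $z_n(t,\cdot)\to\tilde z_n$ locally uniformly, and---the key point---the coefficients $a_n$ are \emph{monotone decreasing} in $n$, so the steady states $\tilde z_n$ decrease as well; dominated convergence in the steady-state equation identifies the pointwise limit as the constant $[a-m]_+/b$, and Dini upgrades this to locally uniform convergence. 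The monotonicity trick entirely replaces your compactness/Liouville step.

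Your approach buys independence from the whole-line theory of Proposition~\ref{p3.12}, at the cost of the forced-problem analysis. Two points in your write-up should be tightened. First, uniqueness of the steady state for the forced problem is not covered by \cite{BZ07,Cov} (the additive term $d\Psi_l$ breaks the homogeneous logistic structure), but you do not need it: monotone iteration from a sufficiently large constant supersolution yields the \emph{maximal} steady state $\bar U_l^{\max}$, and $\limsup_{t\to\infty}u\le\bar U_l^{\max}$ is all the argument requires. Second, the Liouville classification you invoke---that the only bounded nonnegative solutions of $d(J\!*\!U-U)+U(\alpha-bU)=0$ on $\R$ are $0$ and $\alpha_+/b$---does follow from Proposition~\ref{p3.12}(ii) once you note (via $J(0)>0$) that a bounded nonnegative solution with a zero must vanish identically, but this step should be made explicit.
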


\begin{proof}
(i) For any integer $n\geq 1$, it follows from \eqref{3.22} that there exists $T_n$ such that
  \[w(t,x)\geq m-1/n \ \text{\ for\ }t\geq T_n \ \text{\ and\ } x\in[-n-1,n+1].\]
For any given small $\varepsilon>0$, define
  \begin{equation*}
\sigma_n=\begin{cases}
a-m+1/n,&a-m>0,\\
\varepsilon+1/n,&a-m\leq 0,
\end{cases}
\end{equation*}
and
 \begin{align*}
&a_n(x)=\begin{cases}
\sigma_n,&|x|<n,\\
\sigma_n+2(a+K_0+1-\sigma_n)(|x|-n),&n\leq|x|\leq n+1/2,\\
a+K_0+1,&|x|>n+1/2.
\end{cases}
\end{align*}
Clearly $a_n\in C(\R)$, $a-w(t,x)\leq a_n(x)$ for $t>T_n$ and $x\in\R$, $a_n(x)$ is nonincreasing in $n$  and
\begin{equation*}
\lim_{n\to\infty}a_n(x)=\sigma_\infty:=
\begin{cases}
a-m,&a-m>0,\\
\varepsilon,&a-m\leq0.
\end{cases}
\end{equation*}

Let $K:=\max\left\{(a+K_0)/b,\, \|u_0\|_\infty\right\}$. It follows from the comparison principle (\cite[Lemma 2.2]{CDLL}) that
  \[u(t,x)\leq K \ \ \text{\ for\ }\ t\geq0, \ \ x\in[g(t),h(t)].\]

Let $z_n$ be the unique solution of
\begin{equation}\label{3.24}
\begin{cases}
z_t=d\displaystyle\int_{\R}J(x-y)z(t,y)dy-dz
+z[a_n(x)-bz],&t>T_n,\ x\in\R,\\
z(T_n,x)=K,&x\in\R.
\end{cases}
\end{equation}
 Then clearly
 \begin{equation*}
\begin{cases}
z_{nt}\geq d\displaystyle\int_{g(t)}^{h(t)}J(x-y)z_n(t,y)dy-dz_n
+z_n(a-w-bz_n),&t>T_n,\ x\in(g(t),h(t)),\\
z_n(t,g(t))\geq0,\ z_n(t,h(t))\geq0,&t>T_n,\\
z_n(T_n,x)\geq u(T_n,x),&x\in[g(T_n),h(T_n)].
\end{cases}
\end{equation*}
The comparison principle (see \cite[Lemma 2.2]{CDLL}) then infers that
\begin{equation}\label{3.25}
u(t,x)\leq z_n(t,x) \ \text{\ for\ } t\geq T_n \ \text{\ and\ } x\in[g(t),h(t)].
\end{equation}
Since $u(t,x)=0$ for $t\geq T_n$ and $x\in\R\backslash(g(t),h(t))$, we have
  \[u(t,x)\leq z_n(t,x) \ \text{\ for\ } t\geq T_n \ \text{\ and\ } x\in\R.\]
By Propositions \ref{p3.12}\,(i) and \cite[Proposition 3.4(ii)]{CDLL}, we have
\[\lim_{l\to\infty}
\lambda_p(\mathcal{L}_{(-l,l)}^d+a_n(x))
\geq\lim_{l\to\infty}
\lambda_p(\mathcal{L}_{(-l,l)}^d+\sigma_\infty)
=\sigma_\infty>0.\]
It follows from Proposition \ref{p3.12}\,(ii) that \eqref{3.24}
admits a unique positive steady state $\widetilde{z}_n\in C(\R)$:
\begin{equation}\label{3.26}
d\displaystyle\int_{\R}J(x-y)\widetilde{z}_n(y)dy-d\widetilde{z}_n
+\widetilde{z}_n(a_n(x)-b\widetilde{z}_n)=0,\ x\in\R,
\end{equation}
and
 \begin{equation}\label{3.27}
\lim_{t\to\infty}z_n(t,x)=\widetilde{z}_n(x)
\ \text{\ locally uniformly in\ } \R.
 \end{equation}
Since $\sigma_\infty/b$ is a lower solution of \eqref{3.26} and $\sigma_\infty/b\leq K$, applying the comparison principle gives that $z_n(t,x)\geq\sigma_\infty/b$ for $t\geq T_n$ and $x\in\R$. Similarly, we have $z_n(t,x)\leq K$ for $t\geq T_n$ and $x\in\R$. Thus, $\sigma_\infty/b\leq\widetilde{z}_n(x)\leq K$ for every $x\in\R$. It follows from the monotonicity of $a_n(x)$ in $n$ that $z_{n+1}(t,x)\leq z_n(t,x)$ for $t\geq T_{n+1}$ and $x\in\R$. Then $\widetilde{z}_{n+1}(x)\leq\widetilde{z}_n(x)$ for every $x\in\R$. Therefore, there exists $\widetilde{z}_\infty(x)$ such that
\begin{equation*}%\label{znwide-zinftywide}
\lim_{n\to\infty}\widetilde{z}_n(x)
=\widetilde{z}_\infty(x) \ \text{\ for every\ } x\in\R,
\end{equation*}
where $\widetilde{z}_\infty(x)$ satisfies $\sigma_\infty/b\leq\widetilde{z}_\infty(x)\leq K$ in $\R$.
By the Lebesgue dominant convergence theorem, we can pass to the limit in \eqref{3.26} and obtain
\begin{equation*}
d\displaystyle\int_{\R}J(x-y)\widetilde{z}_\infty(y)dy
-d\widetilde{z}_\infty+\widetilde{z}_\infty
(\sigma_\infty-b\widetilde{z}_\infty)=0,\ x\in\R.
\end{equation*}
Since this problem has a unique positive solution, we necessarily have
$\widetilde{z}_\infty(x)\equiv\sigma_\infty/b$, which implies that
\begin{equation*}
\lim_{n\to\infty}\widetilde{z}_n(x)
={\sigma_\infty}/b \ \text{\ for every\ } x\in\R.
\end{equation*}
Since $\tilde z_n$ is monotone in $n$, thanks to Dini's theorem, we have
\[\lim_{n\to\infty}\widetilde{z}_n(x)
={\sigma_\infty}/b \ \text{\ locally uniformly in\ } \R.\]
It follows from this fact, \eqref{3.25}, \eqref{3.27} and the arbitrariness of $\varepsilon$ that
\[\limsup_{t\to\infty}u(t,x)
\leq{[a-m]_+}/b
\ \text{\ locally uniformly in\ } \R.\]

(ii) For any given $l>L_1:={2}/(a-M)$. Clearly
\[\lambda_p\left(\mathcal{L}_{(-l,l)}^d
+a-\left(M+1/l\right)\right)>
\lambda_p\left(\mathcal{L}_{(-l,l)}^d
+(a-M)/2\right).\]
By \cite[Proposition 3.4]{CDLL}, there exists $L_2\geq0$ such that
 \[\lambda_p\left(\mathcal{L}_{(-l,l)}^d
+(a-M)/2\right)>0 \ \text{\ when\ } l>L_2.\]
Take $L:=\max\{L_1,L_2\}$. Then
\[\lambda_p\left(\mathcal{L}_{(-l,l)}^d
+a-\left(M+1/l\right)\right)>0 \ \text{\ when\ } l>L.\]
For any such $l$, it follows from \eqref{3.23} that there exists $T_l$ such that
 \[w(t,x)\leq M+1/l \ \ \text{\ for\ } t\geq T_l,\ x\in[-l,l].\]
Since $h(t)\to\infty$ and $g(t)\to-\infty$ as $t\to\infty$,
  there exists $\widetilde{T}_l\geq T_l$ such that
\[(-l,l)\subset(g(t),h(t))\ \ \text{\ for\ } t\geq\widetilde{T}_l.\]
Thus $u$ satisfies
 \[u_t\geq d\displaystyle\int_{-l}^lJ(x-y)u(t,y)dy-du
+u\left[a-\left(M+1/l\right)-bu\right],\ \ t>\widetilde{T}_l, \ x\in[-l,l].\]
Let $\underline{u}_l$ be the unique solution of
 \begin{equation}\label{3.28}
\begin{cases}
\underline{u}_t=d\displaystyle\int_{-l}^lJ(x-y)\underline{u}(t,y)dy
-d\underline{u}+\underline{u}\left[a-\left(
M+1/l\right)-b\underline{u}\right],
&t>\widetilde{T}_l,\ x\in[-l,l],\\
\underline{u}(\widetilde{T}_l,x)= u(\widetilde{T}_l,x),&x\in[-l,l].
\end{cases}
 \end{equation}
By the comparison principle \cite[Lemma 3.3]{CDLL},
\[u(t,x)\geq\underline{u}_l(t,x) \ \ \text{\ for\ } t\geq \widetilde{T}_l,\ x\in[-l,l].\]
By Lemma \ref{l3.13}, we have that \eqref{3.28} has a unique positive steady state $\underline{U}_l(x)$ and
  \bess
  &\lim_{t\to\infty}\underline{u}_l(t,x)=\underline{U}_l(x)
   \ \ \text{\ uniformly in\ } [-l,l],&\\
 &\lim_{l\to\infty}\underline{U}_l(x)
 =(a-M)/b \ \text{\ locally uniformly in\ } \R.&\eess
Consequently,
\begin{equation*}
\liminf_{t\to\infty}u(t,x)\geq(a-M)/b
\ \text{\ locally uniformly in\ } \R.
\end{equation*}
This completes the proof.
\end{proof}

We are now ready to prove Theorem \ref{th1.4}.

\medskip
{\it Proof of Theorem {\rm\ref{th1.4}\,(i):} The weak competition case}.

\textbf{Step 1}:
Let $q(t)$ be the solution of
\begin{equation*}
\begin{cases}
q'(t)=q(a_1-b_1q),&t>0,\\
q(0)=\sup_{x\in\R}u_0(x).&
\end{cases}
\end{equation*}
Then $\dd\lim_{t\to\infty}q(t)=a_1/b_1$.
By the comparison principle (\cite[Lemma 2.2]{CDLL}), we have $u_1(t,x)\leq q(t)$ for $t>0$ and $x\in[g(t),h(t)]$. In view of $u_1(t,x)=0$ for $t>0$ and $x\in\R\backslash(g(t),h(t))$, we have $u_1(t,x)\leq q(t)$ for $t>0$ and $x\in\R$. Hence,
 \begin{equation}\label{3.29}
\limsup_{t\to\infty}u_1(t,x)\leq{a_1}/{b_1}=:\bar A_1 \ \text{\ locally uniformly in\ } \R.
\end{equation}

\textbf{Step 2}:
By the condition $c_1/b_2<a_1/a_2<b_1/c_2$, we have
  \[a_2-c_2\bar A_1=(a_2b_1-a_1c_2)/{b_1}>0.\]
It follows from this fact, \eqref{3.29} and Lemma \ref{l3.14} that
 \begin{equation}\label{3.30}
\liminf_{t\to\infty}u_2(t,x)\geq
(a_2-c_2\bar A_1)/{b_2}=:\underline B_1
\ \text{\ locally uniformly in\ } \R.
 \end{equation}
The condition $c_1/b_2<a_1/a_2<b_1/c_2$ implies
  \[a_1-c_1\underline B_1=a_1-\frac{c_1}{b_1b_2}(a_2b_1-a_1c_2)
 =a_1-\frac{a_2c_1}{b_2}+\frac{a_1c_2c_1}{b_1b_2}>0.\]
This fact combined with \eqref{3.30} and Lemma \ref{l3.14} allows us to derive
 \bess
\limsup_{t\to\infty}u_1(t,x)\leq
(a_1-c_1\underline B_1)/{b_1}=:\bar A_2
\ \text{\ locally uniformly in\ } \R.
 \eess
Furthermore, the condition $c_1/b_2<a_1/a_2<b_1/c_2$ implies
 \[a_2-c_2\bar A_2=a_2-\frac{a_1c_2}{b_1}+\frac{c_1c_2}{b_1^2b_2}(a_2b_1-a_1c_2)>0.\]
Similar to the above,
 \[\liminf_{t\to\infty}u_2(t,x)\geq
(a_2-c_2\bar A_2)/{b_2}=:\underline B_2
\ \text{\ locally uniformly in\ } \R.\]

\textbf{Step 3}: Repeating the above procedure, we can find two sequences
$\bar A_i$ and $\underline B_i$ such that
 \[\limsup_{t\to\infty}u_1(t,x)\leq\bar A_i,\ \
\ \liminf_{t\to\infty}u_2(t,x)\geq\underline B_i
\ \text{\ locally uniformly in\ } \R,\]
and
 \[\bar A_{i+1}=(a_1-c_1\underline B_i)/{b_1},\ \
 \ \underline B_i=(a_2-c_2\bar A_i)/{b_2},\ \
 \ i=1,2,\cdots.\]
Let
 \[p:=\frac{a_1}{b_1}-\frac{a_2c_1}{b_1b_2},\; \ \ \
q:=\frac{c_1c_2}{b_1b_2}.\]
Then $p>0$, $0<q<1$ by the weak competition assumption. By direct calculation,
 \[\bar A_{i+1}=p+q\bar A_i,\;\;\; i=1,2,\cdots.\]
From $\bar A_2<\bar A_1$ and the above iteration formula, we immediately obtain
 \[0<\bar A_{i+1}<\bar A_i,\;\; \ i=1,2,\cdots,\]
from which it easily follows that
 \[\lim_{i\to\infty}\bar A_i=
\frac{a_1b_2-a_2c_1}{b_1b_2-c_1c_2},\ \ \Longrightarrow \
\lim_{i\to\infty}\underline B_i=
\frac{a_2b_1-a_1c_2}{b_1b_2-c_1c_2}.\]
Thus we have
 \[\limsup_{t\to\infty}u_1(t,x)\leq\frac{a_1b_2-a_2c_1}{b_1b_2-c_1c_2},\ \
\ \liminf_{t\to\infty}u_2(t,x)\geq\frac{a_2b_1-a_1c_2}{b_1b_2-c_1c_2}
\ \text{\ locally uniformly in\ } \R.\]

Similarly, we can show
 \[\liminf_{t\to\infty}u_1(t,x)\geq\frac{a_1b_2-a_2c_1}{b_1b_2-c_1c_2},\ \
\ \limsup_{t\to\infty}u_2(t,x)\leq\frac{a_2b_1-a_1c_2}{b_1b_2-c_1c_2}
\ \text{\ locally uniformly in\ } \R.\]
Theorem \ref{th1.4}\,(i) is proved.

\medskip
{\it Proof of Theorem {\rm\ref{th1.4}\,(ii):} The weak predation case}.

\textbf{Step 1}:
Similarly to the proof of Theorem {\rm\ref{th1.4}\,(i),
 \[\limsup_{t\to\infty}u_1(t,x)\leq{a_1}/{b_1}
 =:\bar A_1 \ \text{\ locally uniformly in\ } \R.\]
Taking advantage of Lemma \ref{l3.14} one has
 \[\limsup_{t\to\infty}u_2(t,x)\leq
(a_2+c_2\bar A_1)/{b_2}=:\bar B_1
\ \text{\ locally uniformly in\ } \R.\]
The condition $a_1b_1b_2>a_2b_1c_1+a_1c_1c_2$ implies
 \[a_1-c_1\bar B_1=\frac{a_1b_1b_2-a_2b_1c_1-a_1c_1c_2}{b_1b_2}>0.\]
Making use of Lemma \ref{l3.14}, repeatedly, we have
 \[\liminf_{t\to\infty}u_1(t,x)\geq
(a_1-c_1\bar B_1)/{b_1}=:\underline A_1
\ \text{\ locally uniformly in\ } \R,\]
and
 \[\liminf_{t\to\infty}u_2(t,x)\geq(a_2+c_2\underline A_1)/{b_2}
=:\underline B_1\ \text{\ locally uniformly in\ } \R.\]
Notice
 \[a_1-c_1\underline B_1=\frac{b_1b_2(a_1b_1b_2-a_2b_1c_1-a_1c_1c_2)+a_2b_1c_1^2c_2
 +a_1c_1^2c_2^2}{b_1^2b_2^2}>0,\]
it follows from Lemma \ref{l3.14} that
 \[\limsup_{t\to\infty}u_1(t,x)\leq
(a_1-c_1\underline B_1)/{b_1}=:\bar A_2
\ \text{\ locally uniformly in\ } \R.\]
Similar to the above,
\begin{equation*}
\limsup_{t\to\infty}u_2(t,x)\leq
(a_2+c_2\bar A_2)/{b_2}=:\bar B_2
\ \text{\ locally uniformly in\ } \R.
\end{equation*}

\textbf{Step 2}:
Repeating the above procedure, we can find four sequences
$\underline A_i,\ \bar A_i,\ \underline B_i$ and $\bar B_i$ such that, for all $i\geq 1$,
\begin{equation}\label{3.31}
\underline A_i\leq\liminf_{t\to\infty}u_1(t,x)
\leq\limsup_{t\to\infty}u_1(t,x)\leq\bar A_i, \ \
\ \underline B_i\leq\liminf_{t\to\infty}u_2(t,x)
\leq\limsup_{t\to\infty}u_2(t,x)\leq\bar B_i
\end{equation}
locally uniformly in $\R$, and
 \[\bar A_1=\frac{a_1}{b_1},\;\
 \ \bar B_i=\frac{a_2+c_2\bar A_i}{b_2},\;\
\ \underline A_i=\frac{a_1-c_1\bar B_i}{b_1},\;\
\ \underline B_i=\frac{a_2+c_2\underline A_i}{b_2},\;\
\ \bar A_{i+1}=\frac{a_1-c_1\underline B_i}{b_1},\]
$i=1,2,\cdots$. Define
 \[a:=\frac{a_2b_1+a_1c_2}{b_1b_2},\quad\ q:=\frac{c_1c_2}{b_1b_2}.\]
By direct calculation, we have
 \bess
 &\bar B_1=a,\ \ \ \ \dd\underline A_1=\frac{a_1}{b_1}-\frac{c_1}{b_1}a,\ \ \ \
\underline B_1=a(1-q),&\\[2mm]
 &\bar A_2=\dd\frac{a_1}{b_1}-\frac{c_1}{b_1}a(1-q),\ \ \
\bar B_2=a(1-q+q^{2}),\ \ \ \underline A_2=\frac{a_1}{b_1}-\frac{c_1}{b_1}
a(1-q+q^{2}),&
 \eess
and
 \[\underline B_{i+1}=a(1-q)+q^2\underline B_i,\ \ \ \ \
\bar B_{i+1}=a(1-q)+q^2\bar B_i,\; \;\; i\geq1.\]
Since $a_1b_1b_2>a_2b_1c_1+a_1c_1c_2$, we have $0<q<1$ and
 \[\underline B_2>\underline B_1>0,\; \ \ \ \bar B_1>\bar B_2>0.\]
The above iteration formula then infers
 \[\underline B_{i+1}>\underline B_i>0,\; \ \ \ \bar B_i>\bar B_{i+1}>0, \ \ \ i\geq 1.\]
 From these we easily obtain
 \begin{equation}\label{3.32}
\lim_{i\to\infty}\bar B_i=
\lim_{i\to\infty}\underline B_i=
\frac{a_1c_2+a_2b_1}{b_1b_2+c_1c_2},
\end{equation}
and subsequently
 \begin{equation}\label{3.33}
\lim_{i\to\infty}\bar A_i=\lim_{i\to\infty}\underline A_i=\frac{a_1b_2-a_2c_1}{b_1b_2+c_1c_2}.
 \end{equation}
Theorem \ref{th1.4}\,(ii) clearly is a consequence of \eqref{3.31}, \eqref{3.32} and \eqref{3.33}. The proof is finished.


\begin{thebibliography}{99}
\bibliographystyle{siam}
\setlength{\baselineskip}{15pt}

\bibitem{BL}
X. Bai and F.  Li, {\it
Classification of global dynamics of competition models with nonlocal dispersals I: symmetric kernels},
Calc. Var. Partial Differential Equations  {\bf 57} (2018), no. 6, Art. 144, 35 pp.

\bibitem{BLS}
X. Bao, W.-T. Li and W. Shen, {\it Traveling wave solutions of Lotka-Volterra competition systems with nonlocal dispersal in periodic habitats}, J. Differential Equations {\bf 260} (2016), no. 12, 8590-8637.

\bibitem{BZ07}P. Bates, G. Zhao, {\it Existence, uniqueness and stability of the stationary solution to a nonlocal evolution equation arising in population dispersal}, J. Math. Anal. Appl., \textbf{332} (2007), 428-440.

\bibitem{B-JFA16}H. Berestycki, J. Coville, H. Vo, {\it On the definition and the properties of the principal eigenvalue of some nonlocal operators}, J. Funct. Anal., \textbf{271}
(2016), 2701-2751.

\bibitem{B-JMB16}H. Berestycki, J. Coville, H. Vo,
{\it Persistence criteria for populations with non-local dispersion},
{J. Math. Biol. }\textbf{72} (2016), 1693-1745.


\bibitem{CDLL} J.-F. Cao, Y. H. Du, F. Li, W.-T. Li, {\it The dynamics of a Fisher-KPP nonlocal
diffusion model with free boundaries}, J. Functional Anal., in press (https:/\!/doi.org/10.1016/j.jfa.2019.02.013).

\bibitem{Cov}J. Coville, {\it On a simple criterion for the existence of a principal eigenfunction of some nonlocal operators}, J. Differential Equations, \textbf{249} (2010), 2921-2953.

\bibitem{DLin10}Y. H. Du and Z. G. Lin, {\it Spreading-Vanishing dichotomy in the diffusive logistic model with a free boundary}, SIAM J. Math. Anal., \textbf{42} (2010), 377-405.

%\bibitem{DLin14} Y. Du and Z. Lin, The diffusive competition model with a free boundary: Invasion of a superior or inferior competitor, Discrete Cont. Dyn. Syst.-B {\bf 19} (2014), 3105-3132.

%\bibitem{DWZ} Y. Du, M. X. Wang and M. Zhou, Semi-wave and spreading speed for the diffusive competition model with a free boundary, J. Math. Pure Appl., {\bf 107} (2017), 253-287.

\bibitem{GW} J. Guo, C. H. Wu, {\it On a free boundary problem for a two-species weak competition system}, J. Dynam. Differential Equations \textbf{24}
(2012), 873-895.

\bibitem{HNS} G. Hetzer, T. Nguyen and W. Shen, {\it Coexistence and extinction in the Volterra-Lotka competition model with nonlocal dispersal}, Commun. Pure Appl. Anal. {\bf 11} (2012), 1699-1722.

\bibitem{H-JMB03}V. Hutson, S. Martinez, K.Mischaikow, G. Vickers, {\it The evolution of dispersal}, J. Math. Biol., \textbf{47} (2003), 483-517.

\bibitem{KLS10}C. Kao, Y. Lou, W. Shen, {\it Random dispersal vs. non-local dispersal}, Discrete Contin. Dyn. Syst., \textbf{26} (2010), 551-596.

\bibitem{Nathan12}R. Natan, E. Klein, J. J. Robledo-Arnuncio, E. Revilla, Dispersal kernels: Review, in {\it Dispersal Ecology and Evolution}, J. Clobert, M. Baguette, T. G. Benton and J. M. Bullock, eds., Oxford University Press, Oxford, UK, 2012, pp. 187-210.

\bibitem{Wjde14}M. X. Wang, {\it On some free boundary problems of the
prey-predator model}, J. Differential Equations, \textbf{256} (2014), 3365-3394.

%\bibitem{Wjde15}
%M. X. Wang, {\it The diffusive logistic equation with a free boundary and sign-changing coefficient}, {J. Differential Equations}, \textbf{258}(4)(2015), 1252-1266.

%\bibitem{13-Wjfa16} M. Wang, {\it A diffusive logistic equation with a free boundary
%and sign-changing coefficient in time-periodic environment}, J. Funct. Anal., \textbf{270} (2016),  483-508.

%\bibitem{WZdcdsa18} M.X. Wang and Q.Y. Zhang, \emph{Dynamics for the diffusive
%leslie-gower model with double free boundaries}, Discrete Cont. Dyn. Syst.,
%{\bf 38}(5)(2018),  2591-2607.

\bibitem{WZhang16} M. X. Wang and Y. Zhang,
{\it The time-periodic diffusive competition models with a free boundary and sign-changing growth rates}, Z. Angew. Math. Phys. {\bf 67} (2016), no. 5, Art. 132, 24 pp.

\bibitem{WZjdde14}
M. X. Wang and J. F.  Zhao, {\it Free boundary problems for a Lotka-Volterra competition
system}, {J. Dyn. Diff. Equat.}, \textbf{26}(3)(2014), 655-672.

\bibitem{WZjdde17} M. X. Wang and J. F. Zhao, {\it A free boundary problem for a predator-prey model with double free boundaries}, J. Dyn. Diff. Equat., \textbf{29} (3)(2017), 957-979.

\bibitem{ZW2015} Y. Zhang and M. X. Wang, {\it A free boundary problem of the ratio-dependent prey-predator model}, Appl. Anal. \textbf{94} (10)(2015), 2147-2167.

\bibitem{ZhaoW16} Y. G. Zhao and M. X. Wang, {\it Free boundary problems for the diffusive competition system in higher dimension with sign-changing coefficients}, IMA J. Appl. Math., {\bf 81} (2016), 255-280.
\end{thebibliography}
\end{document}